\def\@@and{}\makeatother % Removes the "and" in "author, author, and author" when documentclass is amsart
\newcommand{\tr}{\operatorname{tr}}
\renewcommand{\div}{\operatorname{div}}
\newcommand{\Rr}{{\mathbb{R}}}
\newcommand{\Nn}{{\mathbb{N}}}
\newcommand{\Tt}{{\mathbb{T}}}
\newcommand{\Hh}{{\overline{H}}}
\newcommand{\Ll}{{\mathcal{L}}}
\newcommand{\Aa}{{\mathcal{A}}}
\newcommand{\Mm}{{\mathcal{M}}}
\newcommand{\bx}{{\bf x}}
\newcommand{\bdx}{\dot{\bf x}}
\newcommand{\balpha}{{\bar{\alpha}}}
\newcommand{\talpha}{{\tilde{\alpha}}}
\newcommand{\epsi}{\varepsilon}
\newcommand\sizefigure{0.40}
\def\leq{\leqslant}
\def\geq{\geqslant}
\let\weakly\rightharpoonup
\def\weaklystar{\buildrel{\hskip-.6mm\star}\over\weakly}
\numberwithin{equation}{section}
\newtheoremstyle{thmlemcorr}{10pt}{10pt}{\itshape}{}{\bfseries}{.}{10pt}{{\thmname{#1}\thmnumber{
#2}\thmnote{ (#3)}}}
\newtheoremstyle{thmlemcorr*}{10pt}{10pt}{\itshape}{}{\bfseries}{.}\newline{{\thmname{#1}\thmnumber{
\newtheoremstyle{defi}{10pt}{10pt}{\itshape}{}{\bfseries}{.}{10pt}{{\thmname{#1}\thmnumber{
#2}\thmnote{ (#3)}}}
\newtheoremstyle{remexample}{10pt}{10pt}{}{}{\bfseries}{.}{10pt}{{\thmname{#1}\thmnumber{
#2}\thmnote{ (#3)}}}
\newtheoremstyle{ass}{10pt}{10pt}{}{}{\bfseries}{.}{10pt}{{\thmname{#1}\thmnumber{
A#2}\thmnote{ (#3)}}}
\theoremstyle{thmlemcorr}
\newtheorem{theorem}{Theorem}
\numberwithin{theorem}{section}
\newtheorem{corollary}[theorem]{Corollary}
\newtheorem{proposition}[theorem]{Proposition}
\theoremstyle{thmlemcorr*}
\newtheorem{theorem*}{Theorem}
\newtheorem{lemma*}[theorem]{Lemma}
\newtheorem{corollary*}[theorem]{Corollary}
\newtheorem{proposition*}[theorem]{Proposition}
\newtheorem{problem*}[theorem]{Problem}
\newtheorem{conjecture*}[theorem]{Conjecture}
\theoremstyle{defi}
\newtheorem{hyp}{Assumption}
\theoremstyle{remexample}
\newtheorem{remark}[theorem]{Remark}
\newtheorem{lem}[theorem]{Lemma}
\theoremstyle{ass}
\begin{document}

\title[First-order, stationary MFGs with congestion]{First-order, stationary mean-field games with congestion}

\author[D. Evangelista]{David Evangelista}
\address[D. Evangelista]{
        King Abdullah University of Science and Technology (KAUST), CEMSE
Division , Thuwal 23955-6900. Saudi Arabia, and  
        KAUST SRI, Center for Uncertainty Quantification in Computational
Science and Engineering.}
\email{david.evangelista@kaust.edu.sa}

\author[R. Ferreira]{Rita Ferreira}
\address[R. Ferreira]{
        King Abdullah University of Science and Technology (KAUST), CEMSE Division, Thuwal 23955-6900. Saudi Arabia, and  
        KAUST SRI, Center for Uncertainty Quantification in Computational Science and Engineering.}
\email{rita.ferreira@kaust.edu.sa}
\author[D. A. Gomes]{Diogo A. Gomes}
\address[D. A. Gomes]{
        King Abdullah University of Science and Technology (KAUST), CEMSE Division, Thuwal 23955-6900. Saudi Arabia, and  
        KAUST SRI, Center for Uncertainty Quantification in Computational Science and Engineering.}
\email{diogo.gomes@kaust.edu.sa}

\author[L. Nurbekyan]{Levon Nurbekyan}
\address[L. Nurbekyan]{
        King Abdullah University of Science and Technology (KAUST), CEMSE Division, Thuwal 23955-6900. Saudi Arabia, and  
        KAUST SRI, Center for Uncertainty Quantification in Computational Science and Engineering.}
\email{levon.nurbekyan@kaust.edu.sa}

\author[V. Voskanyan]{Vardan Voskanyan}
\address[V. Voskanyan]{
        King Abdullah University of Science and Technology (KAUST), CEMSE Division, Thuwal 23955-6900. Saudi Arabia, and  
        KAUST SRI, Center for Uncertainty Quantification in Computational Science and Engineering.}
\email{vardan.voskanyan@kaust.edu.sa}

\keywords{Mean-Field Game; Congestion; Calculus of Variations}
\subjclass[2010]{
        35J47, %Second order elliptic systems
        35A01} %Existence problems: global existence, local existence, non-existence

\thanks{
        The authors were partially supported by King Abdullah University of Science and Technology (KAUST) baseline and start-up funds.  
}
\date{\today}

\begin{abstract}

Mean-field games (MFGs) are models for large populations of competing rational agents
that seek to optimize a suitable functional. In the case of congestion, 
this functional takes into account the difficulty of moving in high-density areas. 

Here, we study stationary MFGs with congestion with quadratic or power-like Hamiltonians. 
First, using explicit examples, we illustrate two main difficulties: the lack of classical solutions and the existence of 
areas with vanishing density. 
Our main contribution is 
a new variational formulation for MFGs with congestion. This formulation
was not previously known, and, thanks 
to 
it, we prove the existence and uniqueness of solutions. 
%Moreover, we use $\Gamma$-convergence techniques to examine the stability of certain 
%monotone approximations. 
Finally, we consider applications to numerical methods. 
\end{abstract}

\maketitle
%\tableofcontents

\section{Introduction}

Mean-field games (MFGs) is a branch of game theory that studies systems with a large number of competing agents. These games were introduced in \cite{ll1,ll2,ll3} (see also \cite{LCDF}) and \cite{Caines2,Caines1} motivated by problems arising in  population dynamics, mathematical economics, social sciences,  and engineering.  MFGs have been the focus of intense study in the last few years and substantial progress has been achieved. Congestion problems, which arise in models
where the motion of agents in high-density regions is expensive, are a challenging class of 
MFGs. 
Many MFGs 
are determined by a system of a Hamilton--Jacobi equation coupled with a transport or Fokker--Planck equation. In congestion problems, these equations have singularities and, thus, 
their analysis requires particular care.

Here, we study first-order stationary MFGs with congestion. Our main example is
the  system
\begin{equation}
\label{main}
\begin{cases}
\frac{|P+Du|^\gamma}{\gamma m^\alpha}+V(x)=g(m)+\Hh\\
-\div(m^{1-\alpha} |P+Du|^{\gamma-2}(P+Du))=0,  
\end{cases}
\end{equation}
where $x$ takes values on the $d$-dimensional torus, $\Tt^d$, and the
unknowns are  $u, m:\Tt^d\to \Rr$ and $\Hh\in \Rr$, with $m\geq 0$ and $\int_{\Tt^d} m\,dx=1$.  Here, 
 $1\leq\alpha\leq \gamma<\infty$, 
$V:\Tt^d\to \Rr$, $V\in C^\infty(\Tt^d)$, and $g:\Rr^+\to \Rr$ with 
$g(m)=G'(m)$ for some convex function $G:\Rr_0^+\to \Rr$ with $G\in C^\infty(\Rr^+)\cap C(\Rr_0^+)$.
In particular, the convexity of $G$ gives that 
 $g$ is monotonically increasing. 
 
The preceding MFG is a model where agents incur in a large cost if moving in regions with a high agent
density. The constant $-\Hh$ is the average cost per unit of time corresponding to the Lagrangian 
\[
L(x,v, m)=m^\alpha \frac{|v|^{\gamma'}}{\gamma'}+v\cdot P-V(x)+g(m),
\]
where $\frac 1 \gamma +\frac 1 {\gamma'}=1$. More precisely, the typical agent seeks to minimize the long-time average 
cost
\[
\lim_{T\to \infty}\frac 1 T \int_0^T \bigg(m^\alpha \frac{|\bdx(s)|^{\gamma'}}{\gamma'}+\bdx(s)\cdot P-V(\bx(s))+g(m(\bx(s)))\bigg)ds .
\]
Due to this optimization process, agents avoid moving in high-density regions. Further, because
$g$ is  increasing, agents prefer to remain in low-density regions
rather than in high-density regions. Finally, we observe that \(P\) determines the preferred direction of motion.

In the stationary case, the theory for second-order MFGs without singularities is well understood. For example, the papers \cite{GM, GPM1, GPatVrt, PV15, GR} address the existence of classical solutions
and weak solutions were examined in \cite{bocorsporr}.
In dimension one, a characterization of solutions for stationary MFGs 
was developed in \cite{Gomes2016b} (including non-monotone MFGs) and, in the case of congestion, in \cite{GNPr216, nurbekyan17}. The theory of weak solutions was considered in \cite{FG2}, where a general existence result was proven using a monotonicity argument. 
The monotonicity structure that many MFGs enjoy has important applications to numerical methods, see \cite{AFG}. 
A review of  MFG models can be found in \cite{GJS2}
and a survey of regularity results in \cite{GPV}. 

The congestion problem was first introduced in \cite{LCDF} where a uniqueness condition was established.
Next, the existence problem for stationary MFGs  with congestion,  positive viscosity, and a quadratic Hamiltonian was proved in \cite{GMit}.
Subsequently, this problem was examined in more generality in \cite{GE}. The time-dependent case was considered in \cite{GVrt2} (classical solutions) 
and \cite{Graber2} (weak solutions). Later, \cite{Achdou2016} examined weak solutions for time-dependent problems. 

Apart from the results in \cite{FG2},  the one-dimensional examples in  \cite{GNPr216, nurbekyan17},
and the radial cases in \cite{EvGomNur17}, little is known about first-order MFGs with congestion. The critical difficulties stem from two issues: first-order Hamilton--Jacobi equations provide little a priori regularity; second, because the transport equation is a first-order equation, we cannot use Harnack-type results and, thus, we cannot bound $m$ by below by a positive constant. 
Indeed, as we show in Section \ref{lcs}, $m$ can vanish. 
In many MFG problems, the regularity follows from a priori bounds that combine both equations in \eqref{main}. Here, with standard methods, we can only get relatively weak bounds, see Remark~\ref{rmk:onfoeinic}. For example, if  \(0<\alpha\leq 1\), then  there exists a constant, $C$, such that for any regular enough solution of \eqref{main}, 
we have
\[
\int_{\Tt^d}
\left[\left(\frac{|P+Du|^\gamma}{m^\alpha}\right) (1+m) + (m-1) g(m)\right]dx\leq C.   
\]
In 
Section \ref{apbsec}, 
we examine a priori bounds for a class of MFGs that generalize \eqref{main}.

While the bounds from Section \ref{apbsec} are interesting on their own, they are not enough to prove the existence of solutions. 
In the case of MFGs without congestion, a number of variational principles
have been proposed, see \cite{ll1, LCDF}. These are not only of
independent interest but also have important applications, see, for example,
\cite{MR3195846} for a study of efficiency loss in oscillator synchronization games, 
the recent results in \cite{GraCard} where variational principles are used to prove the existence
of solutions for first-order MFGs, and 
\cite{MR3644590} where optimal transport methods are used to examine constrained MFGs, which is
an alternative approach to model congestion. 
Hard congestion problems
can be modeled by variational problems, see for example
\cite{San12} or \cite{San16}. However, 
soft-congestion models such as \eqref{main} do not fit this framework. 
In Section \ref{vp}, we  study a new variational problem for which \eqref{main}
is the corresponding Euler--Lagrange equation.
More concretely, 
let $G:\Rr_0^+\to \Rr$ with $G'=g$. Then, \eqref{main} is the Euler--Lagrange equation of the functional 
\begin{equation}
\label{nvp}
J[u,m]=\int_{\Tt^d} \left(\frac{|P+Du|^\gamma}{\gamma (\alpha-1) m^{\alpha-1}}-V m +G(m)\right) dx;
\end{equation}
that is, if $(u,m)$, with $u,m:\Tt^d\to \Rr$ and $m>0$, is a smooth enough minimizer of $J$ under the constraint 
\[
\int_{\Tt^d} m \,dx=1, 
\]
then $(u,m)$ solves \eqref{main}.
The existence of a minimizer of \eqref{nvp} is
addressed as follows. First, the  $\alpha=1$ case is 
examined in Section \ref{ccsec}. The 
$1<\alpha<\gamma$ case and the case where $\alpha=\gamma$ with $g=m^\theta$, $\theta>0$, are addressed in 
Theorem \ref{thm:exist1}. Finally, the  $\alpha=\gamma$  with more general 
assumptions on $g$ case is considered in Theorem \ref{thm:exist2}.
The uniqueness of a solution 
is shown in Theorem \ref{thm:uniqmin}.
Our new variational principle provides a new construction of weak solutions for MFGs that does not rely on the high-order regularizations in \cite{FG2} nor requires ellipticity as in \cite{ GPatVrt,GPM1, GR, GM,  PV15}. Moreover, our methods  suggest an alternative computational approach for stationary MFGs with congestion that complements the existing ones, see \cite{AFG}. 

Our variational methods do not apply for $0<\alpha<1$
nor to second-order MFGs. Sections~\ref{2dcase} and 
\ref{tsom} are devoted to the study of these cases. 
In Section \ref{2dcase}, we examine
first-order MFGs with $0<\alpha<1$
in the two-dimensional case. There, we perform a
change of variables for which our variational methods can be used.
Next, in Section \ref{tsom}, we study various cases where
we can
reduce second-order MFG systems
to scalar equations. Moreover, we show that these equations 
are equivalent to Euler--Lagrange equations of suitable functionals.  
Finally, in Sections \ref{num}--\ref{num9}, we use our results to develop 
numerical methods for MFGs.

\section{Some explicit examples}
\label{ee}

Before developing the general theory, we consider three examples that illustrate some of the properties of \eqref{main}. First, we prove
that \eqref{main} may fail to have classical solutions. Next, we examine
the critical congestion case, $\alpha=1$. In this case, $u$ is constant and the  existence of solutions to 
\eqref{main} can be addressed by solving algebraic equations. 
%Finally, we examine an elliptic regularization of \eqref{main}, 
%also for  $\alpha=1$.
%In this last case, for $P=0$, we show the equivalence between 
%\eqref{main} and a certain variational problem. 
%

\subsection{Lack of classical solutions}
\label{lcs}

In general, \eqref{main} may not have classical solutions. To illustrate this behavior, we consider the case  when $P=0,$  where the analysis  is elementary.
Here, to simplify the presentation, we take $\gamma=2$ and 
$g(m)=m$, but the analysis is similar for the general case. 
By adding a constant to $V$, we can assume without loss of generality that
\begin{equation}
\label{nv}
\int_{\Tt^d} \,Vdx=0. 
\end{equation}
In this case,  \eqref{main} becomes
\begin{equation}
\label{main2}
\begin{cases}
\frac{|Du|^2}{2 m^\alpha}+V(x)=m+\Hh\\
-\div(m^{1-\alpha} Du)=0.  
\end{cases}
\end{equation}

Now, we assume that \((u,m, \overline H)\) is a classical
solution to \eqref{main2} with \(m>0\)
 and \(\int_{\Tt^d} m\,dx =1\).
Then, multiplying the second equation by $u$ and integrating over $\Tt^d$, we have
\[
\int_{\Tt^d} m^{1-\alpha}|Du|^2\,dx=0.
\] 
Hence, because $m$ does not vanish, $u$ is constant. Accordingly, the first equation in \eqref{main2} becomes
\[
m=-\Hh + V(x).
\]
Using $\int_{\Tt^d}m \,dx=1$ and  \eqref{nv}, we obtain
\begin{equation}\label{eq:lcs}
m=1+V(x). 
\end{equation}
However, without further assumptions,  $1+V$ may take negative values and, thus,  \eqref{main} may not have a classical solution with $m>0$.  

For  a general MFG of the form
\[
\begin{cases}
m^\alpha H(\frac{Du}{m^\alpha}, x)=g(m)+\Hh\\
\div(D_pH(\frac{Du}{m^\alpha}, x) m)=0
\end{cases}
\]
with a Hamiltonian $H:\Rr^d\times \Tt^d\to
\Rr$ satisfying $D_pH (p,x)p>0$ for $p\in \Rr^d \backslash \{0\}$, a similar
argument yields $u$
constant.

\subsection{Critical congestion $\alpha=1$}
\label{ccsec}

If $\alpha=1$ and $\gamma=2$, the second equation in \eqref{main} becomes $\Delta u=0$. Hence, $u$ is constant. Therefore, 
the first equation in \eqref{main} is the following algebraic equation for $m$:
%\[
%\frac{|P|^2}{2 m}+V(x)=g(m)+\Hh;
%\]
%that is, 
\[
\frac{|P|^2}{2 m}-g(m)=\Hh-V(x).  
\]
Suppose that $g$ is increasing and that $P\neq 0$. Then, for each $x$ and for each fixed $\Hh$, the preceding equation has at most one solution, $m(x)>0$. Furthermore, the constant $\Hh$ is 
determined by the normalization condition on $m$. 

The  $\gamma\neq 2$ case is similar; the second equation in  \eqref{main} is
\[
\div(|P+Du|^{\gamma-2}(P+Du))=0. 
\]
The prior equation is the $\gamma$-Laplacian equation for the function $P\cdot x+u(x)$ and, again, $u$ is constant due to the periodicity.

\section{Some formal a priori bounds}
\label{apbsec}
In this section, we prove some a priori bounds for the following more general version of  \eqref{main}:
\begin{equation}\label{mainGen}
        \begin{cases}
        m^\balpha H\big(\frac{P+Du}{m^\balpha}\big)+V(x)=g(m)+\Hh\\
        -\div\left(mD_pH\big(\frac{P+Du}{m^\balpha}\big) \right)=0,
        \end{cases}
\end{equation}
where \(H:\Rr^d\to\Rr\) is the Hamiltonian and \(\bar \alpha\) 
is the congestion parameter. We note that for 
\begin{equation}\label{hamilt1}
        H(p)=\frac{|p|^\gamma}{\gamma}, 
\end{equation}
\eqref{mainGen} reduces to \eqref{main}  by setting $\balpha:=\frac{\alpha}{\gamma-1}$.

The bounds established  next give partial regularity for solutions of  \eqref{main}.  Unfortunately, this regularity is not sufficient to ensure the existence of solutions. We examine the existence of solutions in the next section using methods from the calculus of variations. 

As before, we assume that:
\begin{hyp}\label{A0}
The congestion parameter, $\balpha$, is non-negative, $V\in C^\infty(\Tt^d)$, and $g:\Rr^+\to \Rr$ is a $C^\infty$ monotonically increasing function. 
\end{hyp}

Regarding the Hamiltonian,  we work under the following assumptions:
 \begin{hyp}\label{A4}
        The Hamiltonian, $H:\Rr^d\to\Rr$, is a $C^\infty$ function. Moreover,  there exists a constant, $C>0$, such that
        \begin{equation*}
        D_pH(p)\cdot p - H(p)\geq \frac 1 C H(p)-C
        \end{equation*}
        for all $p\in \mathbb R^d$.
\end{hyp}

% \begin{hyp}\label{As1.2} 
%         There exist $\gamma>1$ and constants, $c_1,c_2,C_1,C_2>0$, such %that
%         \[
%         c_1|p|^{\gamma}+C_1\leq H(p)\leq  c_2|p|^{\gamma}+C_2 
%         \]
%         for all $p\in \mathbb R^d$. 
%\end{hyp}

\begin{hyp}\label{As1.2} 
        There exist $\gamma>1$ and a constant, $C>0$, such that
        \[
        \frac1C|p|^{\gamma}- C\leq H(p)\leq  C\big(|p|^{\gamma}+1\big) 
        \]
        for all $p\in \mathbb R^d$. 
\end{hyp}

\begin{remark}\label{A3}
        Note that Assumptions~\ref{A4} and \ref{As1.2}  imply that there exists a constant,  $\tilde C>0$, such that
        \begin{equation*}
        D_pH(p)\cdot p - H(p)\geq \frac1{\tilde C}|p|^{\gamma}- \tilde C
        \end{equation*}
        for all $p\in \mathbb R^d$.
\end{remark}

\begin{hyp}\label{ADpH} 
        For all \(\delta>0\), there exists a constant, $C_\delta>0$, such         that
        \[
        |D_pH(p)| \leq C_\delta + \delta H(p) 
        \]
        for all $p\in \mathbb R^d$. 
\end{hyp}

\begin{hyp}\label{ADpH2} 
        There exists a constant, $C>0$, such
        that
        \[
        |D_pH(p)| \leq C(|p|^{\gamma-1}+1)
        \]
        for all $p\in \mathbb R^d$. 
\end{hyp}
        
\begin{hyp}\label{A5}
        There exist a constant,  $\varepsilon\in (0, 1)$, such that for any symmetric matrix $M\in \Rr^{d\times d}$ and vector $p\in \Rr^d$, we have
        \[
        \bar{\alpha}|D^2_{pp}H(p)M|^2|p|^2\leq 4(1-\epsi)(D_pH(p)\cdot p-H(p)) \tr(D^2_{pp} H(p)MM),
        \]
        where $|A|=\sup\limits_{x\neq 0} \frac{|Ax|}{|x|}$ for a matrix $A$.
\end{hyp}
The preceding assumptions are similar to the ones in \cite{GE}, where examples of  Hamiltonians  satisfying them are discussed. For example, it is not hard to check that $H(p)=\frac{|p|^{\gamma}}{\gamma}+b \cdot p$ with \(b\in\Rr^d\) satisfies  the prior assumptions, including Assumption \ref{A5} for $\bar{\alpha}< 4/{\gamma}$ and $\gamma\in[1,2]$.

Next,  we prove  two a priori estimates for solutions of \eqref{mainGen}, the first of which holds only for \(\balpha \leq 1\).

\begin{proposition}\label{prop:foe} 
Suppose that Assumptions~\ref{A0}--\ref{ADpH}
hold and that \(\balpha \leq 1\).  Then, there exists a constant, $C>0$, such that, for any 
%$C^\infty(\Tt^d)$
smooth solution of \eqref{mainGen}, $(u,m, \Hh)$ with \(m>0\) and \(\int_{\Tt^d} m\, dx =1\), we have
\begin{equation}
\label{eq:foe0}
\begin{aligned}
\int_{\Tt^d}
\left[\Big|\frac{P+Du}{m^{\bar \alpha}}\Big|^\gamma ( m^{\bar \alpha } +m^{\bar \alpha +1}) + (m-1) g(m)\right]dx\leq C\bigg(1+ \int_{\Tt^d} m^{\bar \alpha +1}\, dx\bigg) . 
\end{aligned}
\end{equation}
\end{proposition}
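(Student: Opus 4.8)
The plan is to combine the two equations in \eqref{mainGen} into two integral identities and then close a coupled system of estimates by absorption. Throughout, write $w:=\frac{P+Du}{m^{\balpha}}$ for brevity and set
\[
A:=\int_{\Tt^d} m^{\balpha}|w|^{\gamma}\,dx,\qquad B:=\int_{\Tt^d} m^{\balpha+1}|w|^{\gamma}\,dx,\qquad N:=\int_{\Tt^d} m^{\balpha+1}\,dx,
\]
so that the left-hand side of \eqref{eq:foe0} equals $A+B+\int_{\Tt^d}(m-1)g(m)\,dx$ and the claim becomes $A+B+\int_{\Tt^d}(m-1)g(m)\,dx\leq C(1+N)$. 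The first identity will control the top-power term $B$; the second will control the low-power term $A$ and produce the congestion term; the sign of $\int_{\Tt^d}(m-1)g(m)\,dx$ will then let the two estimates close.

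First I would extract a current identity from the second equation. Multiplying $-\div\big(mD_pH(w)\big)=0$ by $u$, integrating over $\Tt^d$, and integrating by parts gives $\int_{\Tt^d}Du\cdot mD_pH(w)\,dx=0$; since $Du=m^{\balpha}w-P$ this rearranges to
\[
\int_{\Tt^d} m^{\balpha+1}\,w\cdot D_pH(w)\,dx=\int_{\Tt^d} mP\cdot D_pH(w)\,dx.
\]
On the left, Remark~\ref{A3} combined with Assumption~\ref{As1.2} bounds the integrand below by a multiple of $m^{\balpha+1}|w|^{\gamma}$, producing a lower bound $\tfrac1C B-CN$. On the right, Assumption~\ref{ADpH} gives $|D_pH(w)|\leq C_\delta+\delta H(w)$, and this is exactly where $\balpha\leq1$ enters: the elementary pointwise inequality $m\leq m^{\balpha}+m^{\balpha+1}$ (valid precisely for $0\leq\balpha\leq1$) together with Assumption~\ref{As1.2} yields $\int_{\Tt^d} mH(w)\,dx\leq C(A+B)+C$. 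Choosing $\delta$ small and absorbing the $\delta B$ feedback on the left, I obtain a coercivity estimate of the form $B\leq \varepsilon A+C_\varepsilon(1+N)$ for any fixed small $\varepsilon$.

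Next I would extract a density-energy balance from the first equation. Multiplying the Hamilton--Jacobi equation by $m$ and integrating, multiplying it by $1$ and integrating, and subtracting (using $\int_{\Tt^d}m\,dx=1$) gives
\[
\int_{\Tt^d}(m-1)g(m)\,dx=\int_{\Tt^d}\big(m^{\balpha+1}-m^{\balpha}\big)H(w)\,dx+\int_{\Tt^d}(m-1)V\,dx,
\]
where the last term is bounded by $\|V\|_\infty\int_{\Tt^d}(m+1)\,dx=2\|V\|_\infty$. The crucial sign observation is that, by monotonicity of $g$ (Assumption~\ref{A0}) and $\int_{\Tt^d}(m-1)\,dx=0$,
\[
\int_{\Tt^d}(m-1)g(m)\,dx=\int_{\Tt^d}(m-1)\big(g(m)-g(1)\big)\,dx\geq0.
\]
Bounding $\int_{\Tt^d}m^{\balpha+1}H(w)\,dx\leq CB+CN$ from above and $-\int_{\Tt^d}m^{\balpha}H(w)\,dx\leq-\tfrac1C A+C$ via Assumption~\ref{As1.2} (the latter using $\int_{\Tt^d}m^{\balpha}\,dx\leq2$, again a consequence of $\balpha\leq1$ through $m^{\balpha}\leq1+m$), this identity together with the nonnegativity just noted gives the complementary estimate $A\leq C(1+B+N)$.

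Finally I would close the loop: substituting $B\leq\varepsilon A+C_\varepsilon(1+N)$ into $A\leq C(1+B+N)$ and choosing $\varepsilon$ with $C\varepsilon<\tfrac12$ yields $A\leq C(1+N)$, hence $B\leq C(1+N)$, and then the balance identity gives $\int_{\Tt^d}(m-1)g(m)\,dx\leq C(1+N)$ as well; adding the three bounds proves \eqref{eq:foe0}. I expect the main obstacle to be the middle step: the current term $\int_{\Tt^d}mP\cdot D_pH(w)\,dx$ carries the ``wrong'' power $m^{1}$, and taming it forces both Assumption~\ref{ADpH} (to trade $D_pH$ for $H$ so that the small parameter $\delta$ can absorb the feedback into $A$ and $B$) and the interpolation $m\leq m^{\balpha}+m^{\balpha+1}$, which is precisely where the restriction $\balpha\leq1$ is unavoidable. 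The delicate part is the bookkeeping of constants so that the two coupled inequalities genuinely close rather than circularly reproduce $A$ and $B$.
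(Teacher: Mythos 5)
Your proposal is correct: every identity and absorption step checks out, and the constants close without circularity. It is, however, organized differently from the paper's proof, and the difference is worth noting. The paper performs a single combined computation: it tests the transport equation with $u$ and the Hamilton--Jacobi equation with $(m-1)$, \emph{subtracts}, and integrates, producing one identity (equation \eqref{eq:foe1} in the paper) in which the Lagrangian term $\int m^{\balpha+1}\big(D_pH\cdot\frac{P+Du}{m^{\balpha}}-H\big)dx$, the term $\int m^{\balpha}H\,dx$, and the congestion term $\int(m-1)g(m)\,dx$ all sit on the left with favorable signs; only the single error term $-\int m\,D_pH\cdot P\,dx$ must be absorbed, which is done exactly as you do it, via Assumption~\ref{ADpH} with small $\delta$ and the interpolation $m\leq m^{\balpha}+m^{\balpha+1}$ coming from $\balpha\leq1$. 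You instead keep the two tested equations as \emph{separate} identities ($B\leq\varepsilon A+C_\varepsilon(1+N)$ from the transport equation, $A\leq C(1+B+N)$ from the Hamilton--Jacobi balance) and close them by a two-inequality bootstrap. This forces you to use one ingredient the paper never needs: the sign fact
\[
\int_{\Tt^d}(m-1)g(m)\,dx=\int_{\Tt^d}(m-1)\big(g(m)-g(1)\big)\,dx\geq0,
\]
which relies on the monotonicity of $g$ in Assumption~\ref{A0}. In the paper's one-shot argument the congestion term is simply carried along on the left-hand side, so the estimate \eqref{eq:foe0} holds with no reference to the sign of $g'$; your route is slightly less robust in that respect, but it buys individually meaningful intermediate estimates (a coercivity bound on $B$ alone and a balance bound on $A$ alone) and makes the coupling structure between the two equations explicit. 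Both arguments use the same core pointwise inputs: Remark~\ref{A3}, Assumption~\ref{As1.2}, Assumption~\ref{ADpH} with small $\delta$, and the restriction $\balpha\leq1$ entering only through $m\leq m^{\balpha}+m^{\balpha+1}$ and $m^{\balpha}\leq1+m$.
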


\begin{remark}\label{rmk:onfoe}
We observe that for several examples of \(g\), \eqref{eq:foe0}
has encoded \textit{better} integrability conditions. For instance, if there exist \(c>0\) and  \(\theta>\bar\alpha\) such that  \(\frac1c m^\theta - c \leq g(m) \leq c(1+ m^\theta)\)  in
the previous proposition, then \eqref{eq:foe0} can be replaced by 
\begin{equation}
\label{eq:foe00}
\begin{aligned}
\int_{\Tt^d}
\left[\Big|\frac{P+Du}{m^{\bar \alpha}}\Big|^\gamma ( m^{\bar \alpha } +m^{\bar
\alpha +1}) + m^{\theta  +1}\right]dx\leq C.  
\end{aligned}
\end{equation}

\end{remark}

\begin{proof}[Proof of Proposition~\ref{prop:foe}]

Subtracting  the first equation in \eqref{mainGen} multiplied by $(m-1)$ from the second equation in \eqref{mainGen} multiplied by $u$ first, and then integrating the resulting equation over \(\Tt^d\), we obtain
\begin{equation}\label{eq:foe1}
\begin{aligned}
&\int_{\Tt^d} m^{\bar \alpha +1} \bigg(D_p H\Big(\frac{P+Du}{m^\balpha} \Big)\cdot \frac{P+Du}{m^\balpha}- H\Big( \frac{P+Du}{m^\balpha} \Big)\bigg)dx + \int_{\Tt^d}  m^{\bar \alpha}  H\Big(\frac{P+Du}{m^\balpha} \Big)\,dx  \\
&\quad - \int_{\Tt^d}  m D_p H\Big(\frac{P+Du}{m^\balpha} \Big)\cdot
P\,dx+ \int_{\Tt^d}  (m-1) g(m) \,dx =  \int_{\Tt^d} V(x) (m-1)\,dx\leq 2\Vert V\Vert_\infty ,
\end{aligned}
\end{equation}
where we used integration by parts and the condition  \(\int_{\Tt^d} m\, dx =1\). 

By Remark~\ref{A3}, we have
%
%\begin{equation}\label{eq:foe2}
\begin{align*}
&\int_{\Tt^d} m^{\bar \alpha +1} \bigg(D_p H\Big(\frac{P+Du}{m^\balpha} \Big)\cdot
\frac{P+Du}{m^\balpha}- H\Big( \frac{P+Du}{m^\balpha} \Big)\bigg)dx\\
&\quad \geq \frac1{\tilde C} \int_{\Tt^d} m^{\bar \alpha +1} 
 \Big|\frac{P+Du}{m^\balpha} \Big|^\gamma \,dx - \tilde C
\int_{\Tt^d} m^{\bar \alpha +1} \, dx.
\end{align*}
%\end{equation}
%
Moreover, by
Assumption~\ref{As1.2}, we have
\begin{equation}
\label{eq:foe3}
\begin{aligned}
 \int_{\Tt^d}
 m^{\bar \alpha}  H\Big(\frac{P+Du}{m^\balpha} \Big)\,dx \geq \frac1{ C} \int_{\Tt^d} m^{\bar \alpha } 
 \Big|\frac{P+Du}{m^\balpha} \Big|^\gamma \,dx -  C
\int_{\Tt^d} m^{\bar \alpha } \, dx.
\end{aligned}
\end{equation}
Next, using Assumptions~\ref{As1.2} and~\ref{ADpH} with \(\delta=\frac{1}{2C(\tilde C+ C)|P|}\) and recalling that  \(\int_{\Tt^d} m\,
dx =1\), we can find a   constant, \(\bar C>0\),
such that
\begin{equation}
\label{eq:foe5}
\begin{aligned}
 -\int_{\Tt^d}
 m D_p H\Big(\frac{P+Du}{m^\balpha} \Big)\cdot P\,dx \geq -\frac1{2(C+\tilde C)}
\int_{\Tt^d} m
 \Big|\frac{P+Du}{m^\balpha} \Big|^\gamma \,dx -  \bar C.
\end{aligned}
\end{equation}
Finally, we observe that if \(0<\balpha \leq 1\), then
\(m\leq m^\balpha\) if \(m\leq1\) and 
\(m\leq m^{\balpha+1}\) if \(m\geq1\). Hence, 
\begin{equation*}%\label{eq:foe6}
\begin{aligned}
&-\frac1{2(C+\tilde
C)}
\int_{\Tt^d} m
 \Big|\frac{P+Du}{m^\balpha} \Big|^\gamma \,dx\\
 &\qquad \geq -\frac1{2C}
\int_{\Tt^d} m^\balpha
 \Big|\frac{P+Du}{m^\balpha} \Big|^\gamma \,dx-\frac1{2\tilde
C}
\int_{\Tt^d} m^{\balpha + 1}
 \Big|\frac{P+Du}{m^\balpha} \Big|^\gamma \,dx,
\end{aligned}
\end{equation*}
which, together with \eqref{eq:foe1}--\eqref{eq:foe5} and the estimate \(m^\balpha \leq m^{\balpha+1} + 1\), yields
 \eqref{eq:foe0}.
\end{proof}

\begin{proposition}\label{prop:foe2} 
Suppose that Assumptions~\ref{A0}--\ref{As1.2} and Assumption~\ref{ADpH2}
hold.  Then, there exists a constant, $C>0$,
such that, for any 
%$C^\infty(\Tt^d)$
smooth solution, $(u,m, \Hh)$, of \eqref{mainGen} with \(m>0\) and \(\int_{\Tt^d}
m\, dx =1\), we have
\begin{equation}
\label{eq:foe02}
\begin{aligned}
\int_{\Tt^d}
\left[\Big|\frac{P+Du}{m^{\bar \alpha}}\Big|^\gamma ( m^{\bar \alpha } +m^{\bar
\alpha +1}) + (m-1) g(m)\right]dx\leq C\bigg(1+ \int_{\Tt^d} m^{1-\bar 
\alpha(\gamma-1)}\, dx\bigg) . 
\end{aligned}
\end{equation}
\end{proposition}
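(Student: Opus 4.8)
The plan is to repeat the scheme of the proof of Proposition~\ref{prop:foe}, the sole structural change being the treatment of the cross term, which now rests on Assumption~\ref{ADpH2} in place of Assumption~\ref{ADpH} and which, crucially, no longer needs $\balpha\le1$. I would start again from the identity \eqref{eq:foe1}. The first integral there is bounded below, via Remark~\ref{A3}, by $\frac1{\tilde C}\int_{\Tt^d}m^{\balpha+1}\big|\tfrac{P+Du}{m^\balpha}\big|^\gamma\,dx-\tilde C\int_{\Tt^d}m^{\balpha+1}\,dx$, and the second, via Assumption~\ref{As1.2} exactly as in \eqref{eq:foe3}, by $\frac1C\int_{\Tt^d}m^{\balpha}\big|\tfrac{P+Du}{m^\balpha}\big|^\gamma\,dx-C\int_{\Tt^d}m^{\balpha}\,dx$. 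These two bounds are verbatim from the previous proof and use neither $\balpha\le1$ nor Assumption~\ref{ADpH}.

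The new ingredient is the estimate of $-\int_{\Tt^d}m\,D_pH\big(\tfrac{P+Du}{m^\balpha}\big)\cdot P\,dx$. By Assumption~\ref{ADpH2} and $\int_{\Tt^d}m\,dx=1$, its absolute value is at most $C|P|\int_{\Tt^d}m\big|\tfrac{P+Du}{m^\balpha}\big|^{\gamma-1}dx+C|P|$. I would then apply Young's inequality with exponents $\tfrac{\gamma}{\gamma-1}$ and $\gamma$ to the splitting
\[
m\Big|\tfrac{P+Du}{m^\balpha}\Big|^{\gamma-1}=\Big(m^{\balpha+1}\Big|\tfrac{P+Du}{m^\balpha}\Big|^{\gamma}\Big)^{\frac{\gamma-1}{\gamma}}\,m^{\,1-\frac{(\balpha+1)(\gamma-1)}{\gamma}},
\]
the second factor raised to the power $\gamma$ being exactly $m^{1-\balpha(\gamma-1)}$, so that for each $\delta>0$,
\[
C|P|\int_{\Tt^d}m\Big|\tfrac{P+Du}{m^\balpha}\Big|^{\gamma-1}dx\le\delta\int_{\Tt^d}m^{\balpha+1}\Big|\tfrac{P+Du}{m^\balpha}\Big|^{\gamma}dx+C_\delta\int_{\Tt^d}m^{1-\balpha(\gamma-1)}\,dx.
\]
Choosing $\delta<\frac1{\tilde C}$ absorbs the first term into the first integral of \eqref{eq:foe1}. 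This is precisely the point at which the restriction $\balpha\le1$ is traded for the weight $m^{1-\balpha(\gamma-1)}$ on the right-hand side, in contrast with the pointwise comparisons $m\le m^\balpha$, $m\le m^{\balpha+1}$ used in Proposition~\ref{prop:foe}.

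Collecting terms, the left-hand side of \eqref{eq:foe1} then controls $c\int_{\Tt^d}\big|\tfrac{P+Du}{m^\balpha}\big|^\gamma(m^{\balpha}+m^{\balpha+1})\,dx+\int_{\Tt^d}(m-1)g(m)\,dx$, while the right-hand side is $2\Vert V\Vert_\infty+C|P|+C_\delta\int_{\Tt^d}m^{1-\balpha(\gamma-1)}\,dx$ together with the leftover pure powers $\tilde C\int_{\Tt^d}m^{\balpha+1}\,dx$ and $C\int_{\Tt^d}m^{\balpha}\,dx$ coming from the additive constants in Remark~\ref{A3} and Assumption~\ref{As1.2} (which collapse to a single $\int_{\Tt^d}m^{\balpha+1}\,dx$ through $m^\balpha\le m^{\balpha+1}+1$). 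The main obstacle is controlling this residual $\int_{\Tt^d}m^{\balpha+1}\,dx$, since, unlike the singular weight $m^{1-\balpha(\gamma-1)}$, it cannot be absorbed into the good terms $\int_{\Tt^d}m^{\balpha+1}\big|\tfrac{P+Du}{m^\balpha}\big|^\gamma\,dx$, which degenerate where $P+Du=0$. I would address it by feeding the first equation of \eqref{mainGen} back in: on the set where $P+Du$ is small, $H\big(\tfrac{P+Du}{m^\balpha}\big)$ is comparable to $H(0)$, so the first equation ties $g(m)$ to $m^\balpha$ and the nonnegative term $\int_{\Tt^d}(m-1)g(m)\,dx$ on the left absorbs $\int_{\Tt^d}m^{\balpha+1}\,dx$ there, whereas on the complementary set the good terms dominate. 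For the model Hamiltonian \eqref{hamilt1} these additive constants are absent and the residual powers never appear, so \eqref{eq:foe02} follows directly.
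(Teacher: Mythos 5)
Your main estimate is exactly the paper's: the proof there also proceeds from \eqref{eq:foe1} with the lower bounds of Remark~\ref{A3} and Assumption~\ref{As1.2} unchanged, and its replacement \eqref{eq:foe62} for the cross-term bound \eqref{eq:foe5} --- obtained by applying $|p|^{\gamma-1}\leq \tfrac{1}{2\tilde C C|P|}|p|^{\gamma}+\bar C$ to $p=P+Du$ and multiplying by $m^{1-\balpha(\gamma-1)}$ --- is precisely your Young-inequality step with $\delta=\tfrac{1}{2\tilde C}$, yielding the same weight $m^{1-\balpha(\gamma-1)}$.

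The divergence is your final paragraph. First, note that the paper does not carry out (nor replace by anything else) the absorption you attempt there: after \eqref{eq:foe62} it concludes directly via $m^{\balpha}\leq m^{\balpha+1}+1$, so the residual terms $\tilde C\int_{\Tt^d} m^{\balpha+1}\,dx$ and $C\int_{\Tt^d}m^{\balpha}\,dx$ that you correctly flag (coming from the additive constants in Remark~\ref{A3} and Assumption~\ref{As1.2}) are simply carried along; read literally, the paper's argument establishes \eqref{eq:foe02} with $C\big(1+\int_{\Tt^d} m^{\balpha+1}\,dx+\int_{\Tt^d}m^{1-\balpha(\gamma-1)}\,dx\big)$ on the right, the exact analogue of \eqref{eq:foe0}, and the cleaner bound as stated follows from this argument only when those additive constants are absent --- e.g.\ for the model Hamiltonian \eqref{hamilt1}, as you observe. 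So your ``main obstacle'' is a real issue, but one the paper glosses over rather than resolves. Second, your proposed repair would not close it under the stated hypotheses. The claim that $H\big(\tfrac{P+Du}{m^{\balpha}}\big)$ is comparable to $H(0)$ where $P+Du$ is small is false where $m$ is also small, since the argument of $H$ is $\tfrac{P+Du}{m^{\balpha}}$, not $P+Du$. More importantly, on the set where the residual actually lives (where $m$ is large), feeding the first equation of \eqref{mainGen} back in reintroduces $\Hh$, which cancelled in \eqref{eq:foe1} only because $\int_{\Tt^d}(m-1)\,dx=0$ and for which no a priori bound is available at this stage; and absorbing $\int_{\Tt^d} m^{\balpha+1}\,dx$ into $\int_{\Tt^d}(m-1)g(m)\,dx$ requires $g$ to grow at least like $m^{\balpha}$, whereas Assumption~\ref{A0} gives only monotonicity: for bounded $g$ one has $\int_{\Tt^d}(m-1)g(m)\,dx\leq 2\Vert g\Vert_{\infty}$, which cannot dominate $\int_{\Tt^d} m^{\balpha+1}\,dx$ when $\balpha>0$. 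The honest options are to keep the $\int_{\Tt^d} m^{\balpha+1}\,dx$ term on the right-hand side (as Proposition~\ref{prop:foe} does) or to impose a growth condition on $g$ as in Remark~\ref{rmk:onfoe}; the pointwise feedback argument alone does not suffice.
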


\begin{proof} We proceed exactly as in the proof of Proposition~\ref{prop:foe} up to the estimate \eqref{eq:foe5}. Here, to estimate the term on the left-hand side of \eqref{eq:foe5}, we argue as follows. Using Assumption~\ref{ADpH2} and the condition  \(\int_{\Tt^d} m\,
dx =1  \) first, and then the fact that \(|p|^{\gamma-1} \leq
\frac{1}{2\tilde CC|P|} |p|^\gamma + \bar C\) for all \(p\in\Rr^d\) and for
some positive constant \(\bar C\) independent of \(p\),  we obtain
\begin{equation}\label{eq:foe62}
\begin{aligned}
&-\int_{\Tt^d}
 m D_p H\Big(\frac{P+Du}{m^\balpha} \Big)\cdot P\,dx \\
 \geq& - C|P|\int_{\Tt^d} m^{1+\balpha}
 \frac{|P+Du|^{\gamma-1}}{m^{\balpha\gamma}}\,dx -C|P|\\
\geq&-\frac1{2\tilde
C}
\int_{\Tt^d} m^{1+\balpha}
 \Big|\frac{P+Du}{m^\balpha} \Big|^\gamma \,dx -C|P|\bigg(1+ \bar C \int_{\Tt^d} m^{1-\bar 
\alpha(\gamma-1) }\, dx\bigg) . 
\end{aligned}
\end{equation}
 From  \eqref{eq:foe1}--\eqref{eq:foe3}, \eqref{eq:foe62}, and using  the estimate \(m^\balpha
\leq m^{\balpha+1} + 1\), we deduce
 \eqref{eq:foe02}.
\end{proof}

\begin{remark}\label{rmk:onfoe2}
If \(\balpha(\gamma-1) \leq1\) and, for instance,   there
exist \(c>0\) and  \(\theta>0\) such that  \(\frac1c m^\theta - c \leq
g(m) \leq c(1+ m^\theta)\)  in
the previous proposition, then \eqref{eq:foe02} can be replaced by  \eqref{eq:foe00}.
\end{remark}

\begin{remark}\label{rmk:onfoeinic}
As we mentioned before, \eqref{mainGen} reduces to \eqref{main}  for $\balpha=\frac{\alpha}{\gamma-1}$ and \(H\) given by \eqref{hamilt1}.   In this case, the condition
\(0<\balpha\leq 1\) is equivalent to \(0<\alpha\leq\gamma-1\), while  \(\balpha(\gamma-1) \leq1\) is equivalent to \(0<\alpha\leq 1\). For \(\alpha\) and \(\gamma\) in this range, Propositions~\ref{prop:foe} and \ref{prop:foe2} and Remarks~\ref{rmk:onfoe} and \ref{rmk:onfoe2} provide a priori estimates for smooth solutions of  \eqref{main}.

\end{remark}

The following proposition gives an a priori second-order estimate.
\begin{proposition} 
        Suppose that Assumptions~\ref{A0} and  \ref{A5} hold.  Then, there exists a constant, $C>0$, such that, for any 
%$C^\infty(\Tt^d)$
smooth solution of \eqref{mainGen}, $(u,m, \Hh)$ with \(m>0\) and \(\int_{\Tt^d}
m\, dx =1\), we have
        \begin{equation}\label{SecOrdEst}
        \int_{\Tt^d}\tr\Big(D^2_{pp}H\Big(\frac{P+Du}{m^\balpha} \Big)D^2uD^2u\Big)m^{1-\balpha} \,dx+\int_{\Tt^d} g'(m)|Dm|^2\,dx
        %+\int_{\Tt^d}  \frac{|P+Du|^\gamma}{m^{\alpha+1}}|Dm|^2 dx
        \leq C.
        \end{equation}
\end{proposition}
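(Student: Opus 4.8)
The plan is to run a Bernstein-type argument that couples the two equations in \eqref{mainGen} after differentiating once. Write $w:=\frac{P+Du}{m^\balpha}$, $R:=D_pH(w)\cdot w-H(w)$, and $u_{ki}:=\partial^2_{x_kx_i}u$, so that $(D^2u)_{k\cdot}=(u_{ki})_i$. Differentiating the first equation in the direction $x_k$ and using $\partial_{x_k}w=m^{-\balpha}(D^2u)_{k\cdot}-\balpha m^{-1}(\partial_{x_k}m)\,w$, I would first obtain, for each $k$,
\[
-\balpha m^{\balpha-1}(\partial_{x_k}m)\,R + D_pH(w)\cdot (D^2u)_{k\cdot} + \partial_{x_k}V = g'(m)\,\partial_{x_k}m .
\]
Multiplying by $\partial_{x_k}m$, summing over $k$, and integrating over $\Tt^d$ produces an identity whose right-hand side is exactly $\int_{\Tt^d}g'(m)|Dm|^2\,dx$ and in which the only higher-order contribution is the term $T:=\int_{\Tt^d}\sum_{k,i}(\partial_{x_k}m)(D_pH(w))_i\,u_{ki}\,dx$.

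The key is to rewrite $T$ so that the Hessian term in \eqref{SecOrdEst} appears. I would integrate $T$ by parts in $x_k$. The piece in which the derivative lands on $u_{ki}$ gives $\int_{\Tt^d}m\,D_pH(w)\cdot D(\Delta u)\,dx$, which vanishes after one more integration by parts thanks to the second equation $\div(mD_pH(w))=0$. In the remaining piece I would use $\partial_{x_k}(D_pH(w))_i=\sum_l (D^2_{pp}H(w))_{il}(m^{-\balpha}u_{kl}-\balpha m^{-1}w_l\,\partial_{x_k}m)$, which splits it into the desired quantity $-\int_{\Tt^d}m^{1-\balpha}\tr(D^2_{pp}H(w)D^2uD^2u)\,dx$ and a cross term. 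Setting $Q:=\tr(D^2_{pp}H(w)D^2uD^2u)$ and collecting everything yields the identity
\[
\int_{\Tt^d}m^{1-\balpha}Q\,dx+\int_{\Tt^d}g'(m)|Dm|^2\,dx+\balpha\int_{\Tt^d}m^{\balpha-1}R|Dm|^2\,dx=\balpha\int_{\Tt^d}(D^2u\,D^2_{pp}H(w)\,w)\cdot Dm\,dx+\int_{\Tt^d}Dm\cdot DV\,dx .
\]
All three terms on the left are nonnegative, using $R\ge0$ from Remark~\ref{A3}, that $g$ is increasing, and $D^2_{pp}H(w)\ge0$ (so $Q\ge0$).

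The main obstacle is controlling the cross term, and this is precisely where Assumption~\ref{A5} is used. I would bound its integrand by $\balpha\,|D^2_{pp}H(w)D^2u|\,|w|\,|Dm|$ via Cauchy--Schwarz, and then take the square root of Assumption~\ref{A5} (with $M=D^2u$, $p=w$) to get the pointwise bound $\sqrt{\balpha}\,|D^2_{pp}H(w)D^2u|\,|w|\le 2\sqrt{1-\epsi}\,\sqrt{RQ}$, valid everywhere since both sides are nonnegative. Inserting the weights $m^{(1-\balpha)/2}$ and $m^{(\balpha-1)/2}$ and applying Young's inequality with parameter $\tau=\sqrt{\balpha}$ then gives
\[
\balpha\int_{\Tt^d}(D^2u\,D^2_{pp}H(w)\,w)\cdot Dm\,dx\le \sqrt{1-\epsi}\int_{\Tt^d}m^{1-\balpha}Q\,dx+\balpha\sqrt{1-\epsi}\int_{\Tt^d}m^{\balpha-1}R|Dm|^2\,dx .
\]
Because $\sqrt{1-\epsi}<1$, both contributions can be absorbed into the left-hand side of the identity, leaving strictly positive multiples of $\int_{\Tt^d}m^{1-\balpha}Q\,dx$ and $\int_{\Tt^d}g'(m)|Dm|^2\,dx$.

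It remains to dispose of the lower-order term, which is harmless: integrating by parts and using $m\ge0$ with $\int_{\Tt^d}m\,dx=1$ gives $\int_{\Tt^d}Dm\cdot DV\,dx=-\int_{\Tt^d}m\,\Delta V\,dx\le\|\Delta V\|_{L^\infty}$. Dividing by the positive constant $1-\sqrt{1-\epsi}$ then yields \eqref{SecOrdEst}. The points where $w=0$, as well as the degenerate case $\balpha=0$, cause no difficulty, since there the cross term and its bound both vanish identically.
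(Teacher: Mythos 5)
Your proof is correct and essentially identical to the paper's: testing the once-differentiated Hamilton--Jacobi equation against $\partial_{x_k}m$ is, after a single integration by parts, exactly the paper's step of differentiating twice and testing against $m$, and the remaining ingredients coincide --- killing $\int_{\Tt^d} m\,D_pH\cdot D(\Delta u)\,dx$ via the transport equation tested against $\Delta u$, and absorbing the cross term through Assumption~\ref{A5} plus Cauchy's inequality (your Young parameter leaves the factor $1-\sqrt{1-\epsi}$ where the paper's choice produces an exact cancellation of the $R$-terms and leaves $\epsi$, an immaterial difference). The one blemish is your justification of $R\geq 0$ via Remark~\ref{A3}, which neither applies under the present hypotheses (it needs Assumptions~\ref{A4} and~\ref{As1.2}) nor actually asserts $R\geq 0$ (it gives $R\geq \tilde C^{-1}|p|^\gamma-\tilde C$); however, the paper's own proof makes the same implicit pointwise nonnegativity assumptions on $R$ and on $\tr\big(D^2_{pp}H\,D^2u\,D^2u\big)$ when it splits $\sqrt{R\cdot\tr(\cdot)}$ into separate square roots, so this wrinkle is shared with the original argument rather than a defect of yours.
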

\begin{proof}
        For simplicity, we omit the argument,   $\frac{P+Du}{m^\balpha}$, of the Hamiltonian and its derivatives. Differentiating the first equation in \eqref{mainGen} with respect to $x_k$ and using Einstein summation convention, we have
        %\[
        %\left(m^\alpha H\left(\frac{P+Du}{m^\alpha}\right)\right)_{x_k}+V_{x_k}(x)=g'(m)m_{x_k}.
        %\]
        %The previous equation is equivalent to 
        \begin{equation}\label{HJdiff}
        \balpha m^{\balpha-1}m_{x_k}H +D_{p_i}H u_{x_ix_k}-\balpha \frac{ (P_i+u_{x_i})D_{p_i} H m_{x_k}}{m} +V_{x_k}(x)=g'(m)m_{x_k}.
        \end{equation}
        Next, we note that
        \[
        (D_{p_i}H u_{x_ix_k})_{x_k}=\frac{D^2_{p_ip_j}H u_{x_jx_k}u_{x_ix_k}}{m^\balpha}-\frac{\balpha D^2_{p_ip_j}H(P_j+u_{x_j})m_{x_k}u_{x_i x_k}}{m^{\balpha+1}} +D_{p_i}H (u_{x_kx_k})_{x_i}.
        \]
        By differentiating \eqref{HJdiff} with respect to $x_k$ and using the previous equality, we obtain
        \begin{align}\label{HJ2diff}
        \begin{split}
        (\balpha m^{\balpha-1}m_{x_k}H)_{x_k}+&  
        \frac{D^2_{p_ip_j}H u_{x_jx_k}u_{x_ix_k}}{m^\balpha}
        -\frac{\balpha D^2_{p_ip_j}H(P_j+u_{x_j})m_{x_k}
        u_{x_i x_k}}{m^{\balpha+1}}\\ +D_{p_i}H (u_{x_kx_k})_{x_i} 
        &-\left(\balpha \frac{(P_i+u_{x_i})
        D_{p_i}H m_{x_k}}{m}\right)_{x_k} 
        +V_{x_kx_k}(x)=(g'(m)m_{x_k})_{x_k}.
        \end{split}
        \end{align}
        Now, we multiply the second equation in \eqref{mainGen} by $u_{x_kx_k}$ and integrate by parts. Accordingly, we get the identity       
         \[
        0=\int_{\Tt^d}-\div(mD_pH)u_{x_kx_k}\,dx=\int_{\Tt^d}m D_{p_i}H(u_{x_kx_k})_{x_i}\,dx.
        \]
        Next, we multiply \eqref{HJ2diff} by $m$, integrate by parts, and use the prior identity to derive 
        \begin{align*}
        &\int_{\Tt^d} \tr (D^2_{pp}H D^2uD^2u) m^{1-\balpha}\,dx+\int_{\Tt^d}g'(m)|Dm|^2 \,dx\\
&\quad\leq \int_{\Tt^d}\balpha m^{\balpha-1}H|Dm|^2\,dx-\int_{\Tt^d}\balpha \frac{(P+Du)\cdot D_pH|Dm|^2}{m}\,dx-\int_{\Tt^d}m \Delta V \,dx\\
         &\qquad+\int_{\Tt^d}\frac{\balpha |D^2_{pp}H D^2u||P+Du| |Dm| }{m^{\balpha}}\,dx\\
        &\quad\leq \int_{\Tt^d}\balpha m^{\balpha-1}|Dm|^2\left(H-D_pH\cdot \frac{P+Du}{m^\balpha}\right) dx +\int_{\Tt^d}\frac{\balpha |D^2_{pp}H D^2u||P+Du||Dm|}{m^{\balpha}}dx \\ & \qquad+ \Vert V\Vert_{C^2(\Tt^d)},
        \end{align*} 
where in the last inequality we used that $m$ is a probability density. Setting $Q:=\frac{P+Du}{m^\balpha}$ and \(C:= \Vert V\Vert_{C^2(\Tt^d)}\), so far we proved that         
\begin{equation}\label{soe1}
\begin{aligned}
        &\int_{\Tt^d}\tr(D^2_{pp}H(Q)D^2uD^2u)m^{1-\balpha} \,dx+\int_{\Tt^d} g'(m)|Dm|^2\,dx\\
&\quad\leq \int_{\Tt^d}\balpha m^{\balpha-1}|Dm|^2\left(H(Q)-D_pH(Q)\cdot Q\right)
\,dx +\int_{\Tt^d}\balpha |D^2_{pp}H(Q) D^2u||Q||Dm|dx + C.
        \end{aligned}
   \end{equation}     
  Finally, from Assumption~\ref{A5} and Cauchy's inequality, we have
        \begin{align*}
&\balpha |D_{pp}^2H(Q) D^2u ||Q||Dm|\\
&\quad\leq 2\balpha \sqrt{(D_pH(Q)\cdot Q-H(Q))}m^{\frac{\balpha-1}{2} } |Dm|\sqrt{
(1-\epsi)\tr(D^2_{pp} H(Q)D^2u D^2u) } m^{\frac{1-\balpha}{2} }  \\
&\quad\leq \bar{\alpha}(D_pH(Q)\cdot
Q -H(Q)) m^{\bar{\alpha}-1} |Dm|^2  + (1-\epsi) \tr(D_{pp}^2H(Q)D^2uD^2u) 
m^{1-\balpha },
        \end{align*}
%       Then, Assumptions \ref{As1.2} and \ref{A1} give
%       \begin{align*}
%       \frac{\balpha D_{pp}^2H D^2u (P+Du) Dm }{m^{\balpha}} 
%       &\leq \varepsilon C\frac{D_{pp}^2H|D^2u|^2}{m^{\balpha -1}} + \frac{\balpha^2 }{4\varepsilon}
%       \frac{ |P+Du|^\gamma |Dm|^2}{m^{\alpha+1}}.
%       \end{align*}
which together with \eqref{soe1} gives
        \[
        \int_{\Tt^d}\varepsilon \tr(D^2_{pp}H(Q)D^2uD^2u)m^{1-\balpha} dx+\int_{\Tt^d} g'(m)|Dm|^2dx\leq C.\qedhere
        \]        
\end{proof}

It is worth mentioning that the estimate \eqref{SecOrdEst} is an analog  of the second-order estimates proved in
\cite{ll2}. 
For $g(m)=m^\theta$, \eqref{SecOrdEst} can be combined with the Sobolev theorem 
to yield improved integrability for $m$.

\section{A variational problem}
\label{vp}

In this section, we  study
 a minimization problem associated
with the functional in \eqref{nvp}, $J$. To incorporate the case in
which \(m\) is zero on a set of positive
measure, we consider the following extension
of   \(J\). Let \(\bar
J\) be the functional defined by
\begin{equation}
\label{nvpnew}
\bar J[u,m]=\int_{\Tt^d} \left[\bar
f(\nabla u,
m)-V m +G(m)\right]
dx,
\end{equation}
where, for \((p,m)\in\Rr^d \times \Rr^+_0\),
\begin{equation}\label{barf}
\begin{aligned}
\bar f (p,m) = \begin{cases}
\frac{|P+p|^\gamma}{\gamma
(\alpha-1) m^{\alpha-1}} & \text{if
} m\not=0,\\
+\infty & \text{if } m=0 \text{ and } p\not= -P,\\
0 &\text{if }  m=0 \text{ and
} p= -P.
\end{cases}
\end{aligned}
\end{equation}

We aim at proving the existence and uniqueness
of solutions to the variational problem
\begin{equation}
\label{mmz}
\min_{(u,m)\in \Aa_{q,r}} \bar J[u,m],
\end{equation}
where $\bar J$ is given by \eqref{nvpnew} and
$\Aa_{q,r}$ is the set
\[
\Aa_{q,r}=\left\{(u,m)\in W^{1,q}(\Tt^d)
\times 
L^r(\Tt^d):\int_{\Tt^d} u \,dx=0,\int_{\Tt^d} m \,dx=1,m\geq 0\right\},
\]
with $q\geq1$ and \(r\geq 1\)  to be chosen later. 

To this end, we prove in
Section~\ref{auxformin} that
 \(\bar f \) is convex and lower semi-continuous.
These properties entail the sequential,
weakly lower semi-continuity of  \(\bar J\) in an appropriate
function space. This result is a key ingredient in the proof of the existence
of solutions to \eqref{mmz}, which is presented in Section~\ref{secexist}. Next, in  Section~\ref{uniqmin},
we discuss the uniqueness of these solutions. Finally, in Section~\ref{varexplicit}, we  further characterize the solutions in  the  \(P=0\) case. This characterization will be useful to validate our numerical methods in Sections~\ref{num}--\ref{num9}.

\subsection{Lower semi-continuity properties of
$\bar J$}\label{auxformin}

Here, we study the lower
semi-continuity 
of the functional
$\bar J$ given by \eqref{nvpnew}. We
first prove that  \(\bar f \) defined in \eqref{barf} is convex and lower semi-continuous.

\begin{lem}
\label{barfcxlsc}
Suppose that $1<\alpha\leq\gamma$. Then,
$\bar f$  given by \eqref{barf} is convex and lower semi-continuous in \(\Rr^d \times \Rr^+_0\).         
\end{lem}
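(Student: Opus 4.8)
The plan is to recast $\bar f$ as a power of a perspective function and to exploit homogeneity at the boundary. Since the affine map $(p,m)\mapsto(P+p,m)$ is a bijection of $\Rr^d\times\Rr^+_0$, the convexity and lower semi-continuity of $\bar f$ are equivalent to those of
\[
\phi(q,m):=\begin{cases}\dfrac{|q|^\gamma}{\gamma(\alpha-1)\,m^{\alpha-1}}&\text{if }m\neq0,\\[1mm]+\infty&\text{if }m=0,\ q\neq0,\\[1mm]0&\text{if }m=0,\ q=0,\end{cases}
\]
obtained by setting $q=P+p$. It thus suffices to treat $\phi$ on $\Rr^d\times\Rr^+_0$.

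For convexity on the open half-space $\{m>0\}$, I would set $\kappa:=\gamma-\alpha+1$ and $t:=(\alpha-1)/\kappa$; the hypotheses $1<\alpha\leq\gamma$ give $\kappa\geq1$ and $t>0$, and by definition $t\kappa=\alpha-1$, $(t+1)\kappa=\gamma$. A direct computation then yields the identity
\[
\phi(q,m)=\frac{1}{\gamma(\alpha-1)}\Big(\,m\,|q/m|^{\,t+1}\,\Big)^{\kappa},\qquad m>0.
\]
The map $z\mapsto|z|^{t+1}$ is convex on $\Rr^d$ because $t+1\geq1$, so its perspective $(q,m)\mapsto m\,|q/m|^{t+1}$ is nonnegative and jointly convex on $\Rr^d\times\Rr^+$ by the perspective theorem. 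Composing with the nondecreasing convex function $s\mapsto s^{\kappa}$ on $[0,\infty)$ (recall $\kappa\geq1$) and multiplying by the positive constant $\frac{1}{\gamma(\alpha-1)}$ preserves convexity; hence $\phi$ is convex on $\{m>0\}$. This step is exactly where the assumption $\alpha\leq\gamma$ is used.

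To incorporate the boundary $\{m=0\}$, I would use that $\phi$ is positively homogeneous of degree $\kappa$, i.e.\ $\phi(\lambda q,\lambda m)=\lambda^{\kappa}\phi(q,m)$ for $\lambda>0$, and verify the convexity inequality on a segment with endpoints $z_i=(q_i,m_i)$ directly. If $m_0,m_1>0$ this is the interior case already proved. If $m_0=0$ and $q_0\neq0$, then $\phi(z_0)=+\infty$ and the inequality is trivial; the only remaining case is $z_0=(0,0)$, where $\phi(z_0)=0$ and $z_\lambda=(1-\lambda)z_1$, so homogeneity gives $\phi(z_\lambda)=(1-\lambda)^{\kappa}\phi(z_1)\leq(1-\lambda)\phi(z_1)$ since $\kappa\geq1$ and $1-\lambda\in[0,1]$, which is precisely the desired bound. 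For lower semi-continuity, $\phi$ is continuous on $\{m>0\}$; at a point $(q_0,0)$ with $q_0\neq0$ one has $\phi(q,m)\to+\infty$ as $(q,m)\to(q_0,0)$ with $m>0$, matching the value $+\infty$; and at $(0,0)$, where $\phi=0$, lower semi-continuity is automatic because $\phi\geq0$.

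The main obstacle is not the interior convexity, which the perspective/composition identity renders transparent, but rather the bookkeeping that produces that identity (the choice of $\kappa$ and $t$ and the crucial role of $\gamma\geq\alpha$), together with the careful treatment of the degenerate boundary, where $\bar f$ takes the special values $0$ and $+\infty$. The homogeneity of degree $\kappa\geq1$ is exactly what reconciles these boundary values with convexity and lower semi-continuity.
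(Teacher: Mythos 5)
Your proof is correct and follows essentially the same route as the paper: after the reduction to $P=0$, your identity $\phi(q,m)=\frac{1}{\gamma(\alpha-1)}\bigl(m|q/m|^{t+1}\bigr)^{\kappa}$ with $\kappa=\gamma-\alpha+1$ and $t+1=\gamma/\kappa$ is exactly the paper's decomposition into an increasing convex power of a perspective function (their $\kappa_1,\kappa_2$), followed by the same boundary case analysis and the same lower semi-continuity checks at $m=0$. The only cosmetic difference is that you package the boundary convexity inequality as positive homogeneity of degree $\kappa\geq 1$, whereas the paper writes out the equivalent inequality $(1-\lambda)^{\gamma-\alpha}\leq 1$ directly.
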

\begin{proof}
We begin by proving that \(\bar f\) is convex
in \(\Rr^d\times
\Rr^+\).
Without
loss of generality, 
we assume that \(P=0\). Fix \(\alpha\) and \(\gamma\)  such
that  $1<\alpha\leq\gamma$,
and set \(\kappa_1 = \gamma - \alpha
+1\) and \(\kappa_2
= \frac{\gamma}{\gamma - \alpha +1}\).
Note that \(1 \leq \kappa_1 <\gamma
\) and \(1 < \kappa_2 \leq \gamma\).
Moreover, for \((p,m)\in\Rr^d \times \Rr^+\),
we may rewrite \(\bar f \) as
\[
\bar f(p,m) =\frac{1}{\gamma(\alpha-1)}  \left( m \left|\frac{p}{m}\right|^{\kappa_2}
\right) ^{\kappa_1} = \phi\left( m\,\psi\left(
\frac{p}{m}
\right)\right),
\]
where \(\phi(t) = \frac{1}{\gamma(\alpha-1)}t^{\kappa_1}\) for
\(t \in \Rr^+_0\) and \(\psi(p)=|p|^{\kappa_2}\)
for \(p\in\Rr^d\). Because \(\psi\)
is a convex function in \(\Rr^d\), its perspective function,
defined by \(\bar \psi  (p,m)=  m\,\psi\left(
\frac{p}{m}
\right)\), is  convex in \(\Rr^d\times
\Rr^+\) (see, for instance, \cite[Lemma~2]{DaMa08}).
Because \(\phi\) is an
increasing convex function in \(\Rr^+_0\),
 \(\bar f\) is a convex function in \(\Rr^d\times
\Rr^+\).

Next, we prove that \(\bar f\) is convex
in \(\Rr^d\times
\Rr^+_0\). Let \(\lambda \in (0,1)\),
\(p_1,\, p_2 \in \Rr^d\), \(m_1,\, m_2
\in \Rr^+_0\). We want to show that
\begin{equation}\label{barfcx}
\begin{aligned}
\bar f(\lambda (p_1,m_1) + (1-\lambda) (p_2,
m_2))
\leq \lambda\bar f (p_1,m_1) + (1-\lambda)
\bar f(p_2, m_2).
\end{aligned}
\end{equation}
We are only left to prove that \eqref{barfcx}
holds when either \(m_1=0\) or \(m_2=0\).
Consider first the \(m_1=0 \)
case. If \(p_1\not=-P\) or \(m_2=0\) and  \(p_2\not=-P\), then
the right-hand side of \eqref{barfcx}
equals to \(\infty\); thus, \eqref{barfcx}
holds in these sub-cases. If \(p_1=-P\)  and  \(p_2=-P\),
then  \eqref{barfcx} reduces to the condition
 \(0\leq 0\); thus, \eqref{barfcx}
holds in this sub-case.
Finally, 
if \(p_1=-P\),    \(p_2\not=-P\), and \(m_2\not=0\),
 then  \eqref{barfcx} reduces to the condition
 \((1-\lambda)^{\gamma-\alpha}\leq 1\); thus, \eqref{barfcx}
also holds in this sub-case  because \(1-\lambda \in (0,1)\)
and \(\gamma-\alpha\geq0\).
 The  \(m_2=0\) case is analogous.

Finally, we prove that  $\bar f$  is lower semi-continuous in
\(\Rr^d \times \Rr^+_0\). This amounts
to showing that if \((p,m),\, (p_j,m_j)
\in \Rr^d \times \Rr^+_0\), \(j\in\Nn\),
are such that \((p_j,m_j) \to (p,m)\)
in \(\Rr^d \times \Rr^+_0\) as \(j\to\infty\), then
\begin{equation}\label{barflsc}
\begin{aligned}
\bar f(p,m) \leq \liminf_{j\to\infty}
\bar f(p_j,m_j).
\end{aligned}
\end{equation}

Because  \(\bar f\) is convex
in \(\Rr^d\times
\Rr^+_0\), it is continuous in the interior
of its effective domain. Thus, we are
left to prove that \eqref{barflsc} holds
when \(m=0\).

Assume that \(m=0\). If \(p=-P\), then \eqref{barflsc}
holds because   \(\bar
f(m,p) = \bar f(0,-P) =0 \) in this case. If \(p\not=-P\),
then \(p_j\not=-P\) for all \(j\in\Nn\)
sufficiently large. For  any such \(j\), we have
\begin{equation*}
\begin{aligned}
\bar f (p_j,m_j) = \begin{cases}
\frac{|P+p_j|^\gamma}{\gamma
(\alpha-1) m_j^{\alpha-1}} & \text{if
} m_j\not=0,\\
+\infty & \text{if } m_j=0 .
\end{cases}
\end{aligned}
\end{equation*}
Define \(S=\{j\in\Nn\!: \, m_j \not =0\}\). If \(S\) has finite cardinality, then \(\bar f(p_j,m_j) = \infty\)  for all \(j\in\Nn\)
sufficiently large; thus  \eqref{barflsc}
holds. If \(S\)
has infinite cardinality, then 
\[\liminf_{j\to\infty}
\bar f(p_j,m_j) = \liminf_{j\to\infty\atop j\in S}
\bar f(p_j,m_j) = \liminf_{j\to\infty\atop j\in S} \frac{|P+p_j|^\gamma}{\gamma
(\alpha-1) m_j^{\alpha-1}} = \frac{|P+p|^\gamma}{0^+}=  \infty;\]
    thus,  \eqref{barflsc}
holds.
\end{proof}

The following  proposition
is a simple consequence of \cite[Theorem~5.14]{FoLe07}
and is closely related to the sequential,
weakly lower semi-continuity of \(\bar
J\). 

\begin{proposition}\label{slscLq}
Let \(\bar f\) be the
function given by \eqref{barf} with
\(1<\alpha \leq \gamma\). Then, the functional
\[
(v_1,v_2) \in L^1(\Tt^d;\Rr^d) \times L^1(\Tt^d; \Rr^+_0) \mapsto
\int_{\Tt^d} \left[ \bar f(v_1(x), v_2(x)) + G(v_2(x)) \right]\, dx
\]
is sequentially lower semi-continuous with respect to
the weak convergence in \(L^1(\Tt^d) \times L^1(\Tt^d; \Rr^+_0)\).
\end{proposition}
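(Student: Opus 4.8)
The plan is to verify that the autonomous integrand $F(p,m):=\bar f(p,m)+G(m)$ satisfies the hypotheses of \cite[Theorem~5.14]{FoLe07}, whose conclusion is exactly the sequential weak lower semi-continuity in $L^1$ of the associated integral functional for a convex normal integrand bounded below by an integrable function. Since $F$ does not depend on $x$, measurability of $x\mapsto F(p,m)$ is automatic, and the notion of normal integrand reduces to Borel measurability together with lower semi-continuity in $(p,m)$. Thus the whole proof is a matter of assembling three ingredients: convexity, lower semi-continuity, and an affine lower bound.

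First I would record convexity. By Lemma~\ref{barfcxlsc}, $\bar f$ is convex on $\Rr^d\times\Rr^+_0$. The term $G(m)$, viewed as a function of $(p,m)$, is convex because $G$ is convex by hypothesis and the map is constant in $p$; hence $F$ is convex on $\Rr^d\times\Rr^+_0$. Next, for lower semi-continuity, Lemma~\ref{barfcxlsc} gives that $\bar f$ is lower semi-continuous, while $G\in C(\Rr^+_0)$ is continuous, so $F$ is lower semi-continuous; combined with the trivial measurability this makes $F$ a normal integrand. Finally, I would produce the affine lower bound: since $\bar f\geq 0$ by \eqref{barf} and the convex function $G$ admits an affine minorant, i.e.\ there are constants $a,b\in\Rr$ with $G(m)\geq a+bm$ for all $m\geq0$, we obtain $F(p,m)\geq a+bm$. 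As $\Tt^d$ has finite measure and $v_2\in L^1(\Tt^d;\Rr^+_0)$, the minorant $x\mapsto a+b\,v_2(x)$ is integrable, so the functional is well defined with values in $(-\infty,+\infty]$ and the hypotheses of \cite[Theorem~5.14]{FoLe07} are satisfied.

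The single point that deserves a little care — and the main, albeit mild, obstacle — is the handling of the affine part, since $G$ need not be nonnegative. I would isolate it by writing $F=(\bar f+\tilde G)+(a+bm)$ with $\tilde G(m):=G(m)-a-bm\geq0$. The functional with integrand $\bar f+\tilde G\geq0$ is convex, lower semi-continuous, and nonnegative, hence directly covered by \cite[Theorem~5.14]{FoLe07} and so sequentially weakly lower semi-continuous; the remaining term $\int_{\Tt^d}(a+b\,v_2)\,dx=a\,|\Tt^d|+b\int_{\Tt^d}v_2\,dx$ is weakly continuous on $L^1$, being the sum of a constant and a bounded linear functional. Since the sum of a weakly lower semi-continuous functional and a weakly continuous one is weakly lower semi-continuous, the claim follows. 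I would also note in passing that the constraint $v_2\geq0$ is preserved under weak $L^1$ limits, so that limits of admissible sequences remain in $L^1(\Tt^d;\Rr^+_0)$ and the statement is consistent; beyond this bookkeeping the proof is indeed a short application of the cited theorem, as announced.
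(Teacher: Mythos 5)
Your proof is correct and follows essentially the same route as the paper: both establish convexity and lower semi-continuity of $(p,m)\mapsto \bar f(p,m)+G(m)$ via Lemma~\ref{barfcxlsc} and the convexity of $G$, obtain an affine minorant for $G$ (the paper writes it as $G(m)\geq -C_0(1+m)$), and conclude by citing \cite[Theorem~5.14]{FoLe07}. Your explicit splitting into a nonnegative integrand plus a weakly continuous affine term is merely a more careful bookkeeping of the same lower-bound step, which the paper handles by applying the cited theorem directly to the integrand bounded below by $-C_0(1+|m|)$.
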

\begin{proof}  Because \(G\) is
a real-valued, convex function on \(\Rr^+_0\), we have 
\(G(m)\geq -C_0(1+ m)\) for all \(m\in\Rr^+_0\) and
for some positive constant \(C_0\) independent of  \(m\).
Then, by Lemma~\ref{barfcxlsc} and the non-negativeness of \(\bar
f\),  the mapping
\begin{equation*}
\begin{aligned}
(p,m)\in \Rr^d \times \Rr^+_0 \mapsto \bar f(p,m) + G(m)
\end{aligned}
\end{equation*}
is convex and  lower semi-continuous in \(\Rr^d \times \Rr^+_0\) and bounded from below by \(-C_0(1+ |m|)\) 
for all \((p,m)\in \Rr^d \times \Rr^+_0\). Consequently,  Proposition~\ref{slscLq}
is an immediate consequence of \cite[Theorem~5.14]{FoLe07}.
\end{proof}

As a simple corollary to the previous
proposition, we obtain the following
lower semi-continuity result on \(\bar
J\).

\begin{corollary}\label{barJwlsc}
Let \(q, \, r, r'\geq
1\) be such that 
\(\frac{1}{r} + \frac{1}{r'}= 1\). Then,
the functional  
$\bar J$   given by \eqref{nvpnew} with
\(V\in L^{r'}(\Tt^d)\) is
sequentially, weakly lower semi-continuous
in \(W^{1,q}(\Tt^d)
\times 
L^r(\Tt^d;\Rr^+_0)\).
\end{corollary}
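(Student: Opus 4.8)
The plan is to decompose $\bar J$ into a part governed by Proposition~\ref{slscLq} and a linear part that turns out to be weakly continuous. I would write
\[
\bar J[u,m] = \int_{\Tt^d}\big[\bar f(\nabla u, m) + G(m)\big]\,dx - \int_{\Tt^d} V m\,dx =: I[u,m] - \ell[m].
\]
Given a sequence with $(u_j, m_j) \weakly (u,m)$ in $W^{1,q}(\Tt^d) \times L^r(\Tt^d; \Rr^+_0)$, the goal reduces to showing $I[u,m] \leq \liminf_j I[u_j,m_j]$ and $\ell[m_j] \to \ell[m]$, and then combining the two facts.

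Next I would reduce the weak convergence in $W^{1,q}\times L^r$ to the weak $L^1$ convergence in which Proposition~\ref{slscLq} is stated. Since $u_j \weakly u$ in $W^{1,q}$, the gradients satisfy $\nabla u_j \weakly \nabla u$ in $L^q(\Tt^d;\Rr^d)$; because $\Tt^d$ has finite measure we have the inclusion $L^\infty(\Tt^d) \subset L^{q'}(\Tt^d)$, so testing against $L^\infty$ functions upgrades this to $\nabla u_j \weakly \nabla u$ in $L^1(\Tt^d; \Rr^d)$. The same argument applied to $m_j \weakly m$ in $L^r$ gives $m_j \weakly m$ in $L^1(\Tt^d)$. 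Moreover, since the cone $\{m \geq 0\}$ is a closed convex subset of $L^r$, hence weakly closed, the limit satisfies $m \geq 0$ a.e., so both $(\nabla u, m)$ and $(\nabla u_j, m_j)$ indeed lie in $L^1(\Tt^d;\Rr^d) \times L^1(\Tt^d; \Rr^+_0)$. Proposition~\ref{slscLq} then yields $I[u,m] \leq \liminf_j I[u_j, m_j]$.

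For the linear part, the point is that $V \in L^{r'}(\Tt^d)$ is precisely an admissible test function for the weak $L^r$ convergence $m_j \weakly m$, so $\ell[m_j] = \int_{\Tt^d} V m_j\,dx \to \int_{\Tt^d} V m\,dx = \ell[m]$. Using the superadditivity of the $\liminf$ together with the convergence of $\ell[m_j]$, I would conclude
\[
\liminf_{j\to\infty} \bar J[u_j,m_j] \geq \liminf_{j\to\infty} I[u_j,m_j] - \lim_{j\to\infty} \ell[m_j] \geq I[u,m] - \ell[m] = \bar J[u,m],
\]
which is the desired sequential, weak lower semi-continuity.

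The only genuine subtlety is the transfer of weak convergence between the function spaces, that is, passing from the weak topology of $W^{1,q}\times L^r$ to the weak $L^1\times L^1$ topology required by Proposition~\ref{slscLq}; this is exactly where the finiteness of the measure of $\Tt^d$ (giving $L^\infty \subset L^{q'}$ and $L^\infty \subset L^{r'}$) is essential. Everything else is a direct consequence of the convexity and lower semi-continuity of $\bar f$ established in Lemma~\ref{barfcxlsc} and packaged into Proposition~\ref{slscLq}.
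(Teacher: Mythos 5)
Your proposal is correct and follows essentially the same route as the paper: split $\bar J$ into the convex part $\int[\bar f(\nabla u,m)+G(m)]\,dx$ handled by Proposition~\ref{slscLq} and the linear part $\int Vm\,dx$, which is weakly continuous in $L^r$ since $V\in L^{r'}(\Tt^d)$, and transfer the weak convergences from $W^{1,q}\times L^r$ down to $L^1\times L^1$ using the finiteness of the measure of $\Tt^d$. Your write-up merely makes explicit the details (gradients converging weakly, weak closedness of the nonnegativity cone, superadditivity of $\liminf$) that the paper's two-sentence proof leaves implicit.
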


\begin{proof} 
We observe first that
the functional
\begin{equation*}
\begin{aligned}
m\mapsto \int_{\Tt^d} V(x) m(x)\, dx
\end{aligned}
\end{equation*}
is continuous with respect to the weak convergence
in \(L^r(\Tt^d)\) because \(V\in L^{r'}(\Tt^d)\). To conclude,  we invoke
Proposition~\ref{slscLq} and recall that because $\Tt^d$ is compact, sequential weak lower semi-continuity in \(L^1 \times L^1\) implies sequential weak lower semi-continuity
in \(L^q \times L^r\).    
\end{proof}

Next, we prove the lower semi-continuity in the sense
of measures of
the first integral term in \(\bar J\). This result
will be useful to proving existence
of solutions to \eqref{mmz} when \(\alpha=\gamma\).

We recall that   if \((X,\mathfrak{M})\)
is a measurable space and \(\vartheta:
\mathfrak{M} \to \Rr^n\) is a vectorial
measure, then the total variation of
\(\vartheta\) is the measure \(\Vert
\vartheta\Vert: \mathfrak{M} \to[0,\infty)\)
defined for all \(E\subset \mathfrak{M}\)
by
\begin{equation}\label{totalvar}
\begin{aligned}
\Vert
\vartheta\Vert(E)= \sup \bigg\{ \sum_{i=1}^\infty
|\vartheta(E_i)|\!: \, \{E_i\}\subset\mathfrak{M}  \text{ is a partition of } E \bigg\}.
\end{aligned}
\end{equation}
Moreover, given \(\tilde E \in \mathfrak{M} \), we denote by \(\vartheta\lfloor\tilde
E\)  the restriction of \(\vartheta\) to \(\tilde {E}\),  which is the measure given by \(\big(\vartheta\lfloor\tilde
E\big)(E)=\vartheta(E\cap \tilde E)\) for \(E\in \mathfrak{M}\).

\begin{remark}\label{ontotalvar}
Observe that if \((X,\mathfrak{M})\)
is a measurable space and \(\vartheta:
\mathfrak{M} \to \Rr^n\) is a vectorial
measure, then 
\begin{equation*}
\begin{aligned}
\frac{1}{n}\sum_{j=1}^n \Vert \vartheta_j
\Vert (E) \leq \Vert 
\vartheta\Vert(E) \leq \sum_{j=1}^n \Vert \vartheta_j
\Vert (E)
\end{aligned}
\end{equation*}
for all \(E\subset \mathfrak{M}\), where
each \(\Vert \vartheta_j
\Vert (E)\) is given by \eqref{totalvar}
(with \(n=1\)).
\end{remark}

In what follows, \(\mathcal{L}^d\) stands for the \(d\)-dimensional
Lebesgue
 measure.

\begin{proposition}\label{slscM}
Let \(\bar f\) be the
function given by \eqref{barf} with
\(\alpha = \gamma\). If
 \((v_1^n,v_2^n)_{n\in\Nn}
\subset L^1(\Tt^d;\Rr^d) \times
L^1(\Tt^d; \Rr^+_0)\) and \(\vartheta\in
\Mm(\Tt^d;\Rr^d \times \Rr^+_0) \) are such that 
\begin{equation*}
\begin{aligned}
(v_1^n,v_2^n) \mathcal{L}^d\lfloor{\Tt^d}
\weaklystar \vartheta \enspace \text{weakly-$\star$
in } \Mm(\Tt^d;\Rr^d \times \Rr^+_0),
\end{aligned}
\end{equation*}
then
\begin{equation*}
%\label{slscM}
\begin{aligned}
\liminf_{n\to\infty} \int_{\Tt^d} \bar
f(v_1^n(x),v_2^n(x))\, dx \geq \int_{\Tt^d}
\bar
f\left( \frac{d\vartheta}{d \mathcal{L}^d}
(x)\right) dx + \int_{\Tt^d}
\bar
f^\infty\left( \frac{d\vartheta_s}{d \Vert\vartheta_s\Vert}
(x)\right) d \Vert\vartheta_s\Vert(x),
\end{aligned}
\end{equation*}
where
\(\vartheta = \frac{d\vartheta}{d \mathcal{L}^d}
 \mathcal{L}^d\lfloor\Tt^d +\vartheta_s
 \) is the Lebesgue--Besicovitch decomposition of
 \(\vartheta\) with respect to the \(d\)-dimensional
Lebesgue
 measure, \(\frac{d\vartheta_s}{d \Vert\vartheta_s\Vert}\)
 is the Radon--Nikodym derivative of
\(\vartheta_s\) with respect to its total
variation, and  \(\bar f^\infty:
 \Rr^d \times \Rr^+_0 \to [0,\infty]
 \) is the recession function of \(\bar
 f\); that is, for \((p,m)\in\Rr^d \times
\Rr^+_0\),
\begin{equation}\label{barfrec}
\begin{aligned}
\bar f^\infty (p,m) = \begin{cases}
\frac{|p|^\gamma}{\gamma
(\gamma-1) m^{\gamma-1}} & \text{if
} m\not=0,\\
+\infty & \text{if } m=0 \text{ and
} p\not= 0,\\
0 &\text{if }  m=0 \text{ and
} p= 0.
\end{cases}
\end{aligned}
\end{equation}\end{proposition}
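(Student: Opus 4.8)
The plan is to recognize Proposition~\ref{slscM} as a special case of the classical lower semi-continuity theorem for convex functionals of vector-valued measures (of Reshetnyak--Goffman--Serrin type), and to supply the structural facts that make it applicable. By Lemma~\ref{barfcxlsc} (with $\alpha=\gamma$), the integrand $\bar f$ from \eqref{barf} is convex and lower semi-continuous on $\Rr^d\times\Rr^+_0$; it is non-negative, hence bounded below; and extending it by $+\infty$ on $\{m<0\}$ keeps it proper, convex, and lower semi-continuous on all of $\Rr^{d+1}$. The first point to settle is that the recession function of $\bar f$ is exactly $\bar f^\infty$ from \eqref{barfrec}. Using the definition $\bar f^\infty(z)=\lim_{t\to\infty} t^{-1}\bar f(z_0+tz)$ with the fixed base point $z_0=(0,1)$ in the domain, one checks directly that the shift $P$ becomes negligible as $t\to\infty$, so that $|P+p|^\gamma$ is replaced by $|p|^\gamma$; examining the cases $m>0$, $(p,m)=(0,0)$, and $p\neq 0,\,m=0$ (where $\gamma>1$ forces the limit to be $+\infty$) then reproduces \eqref{barfrec}, and in particular $\bar f^\infty$ is convex, lower semi-continuous, and positively $1$-homogeneous.

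For the estimate itself I would use the dual representation of $\bar f$. Since $\bar f$ is proper, convex, and lower semi-continuous, separability yields a countable family of affine functions $z\mapsto\ell_i\cdot z+c_i$, with $\ell_i\in\Rr^{d+1}$ and $c_i\in\Rr$, such that $\bar f(z)=\sup_i(\ell_i\cdot z+c_i)$ for all $z\in\Rr^{d+1}$; correspondingly $\bar f^\infty(z)=\sup_i\ell_i\cdot z$ on its domain. Writing the Lebesgue--Besicovitch decomposition $\vartheta=\tfrac{d\vartheta}{d\mathcal{L}^d}\,\mathcal{L}^d\lfloor\Tt^d+\vartheta_s$, the key algebraic step is the identity
\[
\int_{\Tt^d}\bar f\Big(\tfrac{d\vartheta}{d\mathcal{L}^d}\Big)\,dx+\int_{\Tt^d}\bar f^\infty\Big(\tfrac{d\vartheta_s}{d\Vert\vartheta_s\Vert}\Big)\,d\Vert\vartheta_s\Vert=\sup\sum_{h=1}^N\Big(\int_{A_h}\ell_{i_h}\cdot d\vartheta+c_{i_h}\,\mathcal{L}^d(A_h)\Big),
\]
where the supremum runs over all finite Borel partitions $\{A_h\}_{h=1}^N$ of $\Tt^d$ and all index choices $i_h\in\Nn$. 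The point is that the constant $c_{i_h}$ pairs only with the Lebesgue part $\mathcal{L}^d(A_h)$, so that the linear part $\ell_{i_h}$ captures the absolutely continuous piece through $\bar f$ and the singular piece $\vartheta_s$ through the $1$-homogeneous $\bar f^\infty$; I would prove this by choosing, on each cell, an index $i_h$ nearly realizing the two suprema at $\tfrac{d\vartheta}{d\mathcal{L}^d}$ and $\tfrac{d\vartheta_s}{d\Vert\vartheta_s\Vert}$, respectively.

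With this representation fixed, it remains to bound each right-hand term from below by the liminf. For a fixed partition and indices, the pointwise inequality $\ell_{i_h}\cdot(v_1^n,v_2^n)+c_{i_h}\leq\bar f(v_1^n,v_2^n)$ together with the weak-$\star$ convergence $(v_1^n,v_2^n)\,\mathcal{L}^d\lfloor\Tt^d\weaklystar\vartheta$ gives
\[
\sum_{h=1}^N\Big(\int_{A_h}\ell_{i_h}\cdot d\vartheta+c_{i_h}\,\mathcal{L}^d(A_h)\Big)\leq\liminf_{n\to\infty}\int_{\Tt^d}\bar f(v_1^n,v_2^n)\,dx,
\]
and taking the supremum over partitions and indices yields the assertion. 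The main obstacle is precisely this limit passage combined with the partition identity: the coefficient $\ell_{i_h}$ is tested against the discontinuous indicator $\chi_{A_h}$, so one must approximate $\chi_{A_h}$ by continuous functions (equivalently, reduce to $\vartheta$-continuity sets via outer regularity), incurring an error that vanishes as the approximation is refined, and one must confirm that $\bar f^\infty$ is exactly what the singular part $\vartheta_s$ sees---which is why pinning down \eqref{barfrec} and the homogeneity of $\bar f^\infty$ at the outset is essential. Once these structural facts are in place, the conclusion is a direct consequence of the general lower semi-continuity theorem for convex functions of measures in the form available in, e.g., \cite{FoLe07}.
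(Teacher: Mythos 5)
Your proposal is correct and takes essentially the same route as the paper: both check that \(\bar f\) is proper, convex, and lower semi-continuous (Lemma~\ref{barfcxlsc}), identify its recession function \eqref{barfrec} via the base-point limit formula (the paper at \((-P,0)\) using \cite[Theorem~4.70]{FoLe07}, you at \((0,1)\) --- equivalent, since the recession function of a proper, convex, lower semi-continuous function does not depend on the base point chosen in its effective domain), and then conclude by the Reshetnyak--Goffman--Serrin lower semi-continuity theorem for convex functionals of measures, i.e.\ \cite[Theorem~5.19]{FoLe07}. The only difference is that you additionally sketch a proof of that general theorem (countable affine minorants, the partition representation, and the limit passage on continuity sets); this is a correct but unneeded unpacking of the result the paper simply cites, up to the minor normalization of the total variation that the paper addresses in Remark~\ref{ontotalvar}.
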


\begin{proof} 
In view of Remark~\ref{ontotalvar},
 \cite[Theorem~5.19]{FoLe07}
holds when we consider the total
variation defined by  \eqref{totalvar}
(compare with \cite[Definition~1.183]{FoLe07}).
 To conclude, it suffices to use \cite[Theorem~4.70]{FoLe07}
to characterize the recession function
of \(\bar f\), taking into account that \(\bar f\) 
is proper, convex, and lower semi-continuous on \(\Rr^d\times \Rr^+_0\). Thus, we
obtain
\begin{equation*}
\begin{aligned}
\bar f^\infty (p,m) = \lim_{t\to\infty}
\frac{\bar f((-P,0)+t(p,m)) - \bar f(-P,0)}{t},
\end{aligned}
\end{equation*}
from which we derive \eqref{barfrec}.
\end{proof}

\subsection{Existence of solutions
%\eqref{mmz}
}\label{secexist}

Here, we examine the existence of solutions
to the minimization problem \eqref{mmz}.
From Theorems~\ref{thm:exist1}
and \ref{thm:exist2} below, it follows that
 for all \(1< \alpha\leq\gamma\), 
 there exists a solution to this problem.

\begin{theorem}\label{thm:exist1}
Assume that  \(V\in L^\infty(\Tt^d)\) and  
%%  that \(G\) satisfies the following assumption:
%% \begin{itemize}
%% \item[($\mathcal{G}$1)] If \((m_n)_{n\in\Nn}\subset L^1(\Tt^d)\)
%% satisfies \(m_n\geq 0\) a.e.\! in
%% \(\Tt^d\), \(\int_{\Tt^d} m_n \, dx = 1\), \(n\in\Nn\),
%% and \(\sup_{n\in\Nn} \int_{\Tt^d} G(m_n) \, dx <\infty\), then \((m_n)_{n\in\Nn}\)
%% is weakly pre-compact in \( L^1(\Tt^d;\Rr^+_0)\).
%% \end{itemize}
\begin{itemize}
\item[($\mathcal{G}$1)] \(G\) is coercive; that is,  \(\displaystyle\lim_{z\to+\infty}\frac{G(z)}{z} = +\infty\).
\end{itemize}
Then, the minimization problem \eqref{mmz} has a solution
\((u,m) \in \Aa_{%\tfrac\gamma\alpha
\gamma/\alpha,1}\) for all
\(1<\alpha< \gamma\). Moreover, if 
\begin{itemize}
\item[($\mathcal{G}$2)] there exist positive constants, \(\theta\) and \(C\), such that
\(\displaystyle  G(z)\geq\frac1C z^{\theta+1} - C
 \text{ for all } z>0 ,
\)
\end{itemize}
then the minimization problem \eqref{mmz} has a solution
\((u,m) \in \Aa_{
{\gamma(1+\theta)}/{(\alpha + \theta)},1+\theta}\) for all 
\(1<\alpha\leq\gamma\). \end{theorem}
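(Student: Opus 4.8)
The plan is to apply the direct method of the calculus of variations. First I would check that the problem is well posed: testing with $(u,m)=(0,1)$ gives $\bar J[0,1]=\frac{|P|^\gamma}{\gamma(\alpha-1)}-\int_{\Tt^d}V\,dx+G(1)<\infty$, so the infimum in \eqref{mmz} is finite from above. For a lower bound, since $\bar f\geq0$, $V\in L^\infty(\Tt^d)$, and $\int_{\Tt^d}m\,dx=1$ with $m\geq0$, one has $-\int_{\Tt^d}Vm\,dx\geq-\Vert V\Vert_\infty$; moreover, convexity of $G$ yields $G(z)\geq-C_0(1+z)$, whence $\int_{\Tt^d}G(m)\,dx\geq-2C_0$. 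Thus $\bar J$ is bounded below on $\Aa_{q,r}$, and I may fix a minimizing sequence $(u_n,m_n)$, along which $\int_{\Tt^d}\bar f(\nabla u_n,m_n)\,dx$, $\int_{\Tt^d}G(m_n)\,dx$, and hence $\int_{\Tt^d}\frac{|P+\nabla u_n|^\gamma}{m_n^{\alpha-1}}\,dx$ are all uniformly bounded.

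Next I would extract compactness. For the density, in the regime where only (\(\mathcal{G}\)1) is assumed, I would use that the uniform bound on $\int_{\Tt^d}G(m_n)\,dx$ together with the superlinearity of $G$ is exactly the de la Vall\'ee-Poussin condition; hence $\{m_n\}$ is uniformly integrable and, by the Dunford-Pettis theorem, relatively weakly compact in $L^1(\Tt^d)$. Under (\(\mathcal{G}\)2) the sharper bound $\int_{\Tt^d}m_n^{1+\theta}\,dx\leq C$ gives boundedness in the reflexive space $L^{1+\theta}(\Tt^d)$. For the gradient, I would run a H\"older interpolation: on the set $\{m_n>0\}$ (off which $\nabla u_n=-P$ a.e. by finiteness of $\bar f$, so both sides below vanish), writing $|P+\nabla u_n|^q=\big(|P+\nabla u_n|^\gamma m_n^{-(\alpha-1)}\big)^{q/\gamma}\,m_n^{q(\alpha-1)/\gamma}$ and applying H\"older with exponents $\gamma/q$ and $\gamma/(\gamma-q)$, the first factor is controlled by the bounded quantity above, while a direct computation shows that the exponent on $m_n$ in the second factor equals $1$ precisely when $q=\gamma/\alpha$ (the (\(\mathcal{G}\)1) case, so the factor reduces to a power of $\int_{\Tt^d}m_n\,dx=1$) and equals $1+\theta$ precisely when $q=\gamma(1+\theta)/(\alpha+\theta)$ (the (\(\mathcal{G}\)2) case, so it reduces to a power of $\int_{\Tt^d}m_n^{1+\theta}\,dx$). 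Hence $\nabla u_n$ is bounded in $L^q(\Tt^d)$, and since $\int_{\Tt^d}u_n\,dx=0$, Poincar\'e's inequality bounds $u_n$ in $W^{1,q}(\Tt^d)$. One checks that $q>1$ in both cases ($\alpha<\gamma$ forces $\gamma/\alpha>1$, and $\gamma>1$ forces $\gamma(1+\theta)/(\alpha+\theta)>1$ even for $\alpha=\gamma$), so the gradient spaces $L^q$ are reflexive and $\nabla u_n$ is weakly precompact.

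Passing to a subsequence, I would obtain $u_n\weakly u$ in $W^{1,q}(\Tt^d)$ and $m_n\weakly m$ in $L^r(\Tt^d)$, with $r=1$ (resp. $r=1+\theta$). The constraints survive the limit: $\int_{\Tt^d}u\,dx=0$ and $\int_{\Tt^d}m\,dx=1$ because the corresponding linear functionals are weakly continuous, and $m\geq0$ because $\int_E m\,dx=\lim_n\int_E m_n\,dx\geq0$ for every measurable $E$; hence $(u,m)\in\Aa_{q,r}$. Finally, since $1<\alpha\leq\gamma$ and $V\in L^\infty(\Tt^d)=L^{r'}(\Tt^d)$ in the first case (and $V\in L^\infty\subset L^{r'}$ in the second), Corollary~\ref{barJwlsc} gives the sequential weak lower semi-continuity of $\bar J$ on $W^{1,q}(\Tt^d)\times L^r(\Tt^d;\Rr^+_0)$, so $\bar J[u,m]\leq\liminf_n\bar J[u_n,m_n]=\min_{\Aa_{q,r}}\bar J$, and $(u,m)$ is the desired minimizer.

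The hard part will be the compactness of the densities in the first case, where $r=1$ is non-reflexive and boundedness in $L^1$ alone is insufficient: the whole point is that the coercivity hypothesis (\(\mathcal{G}\)1) upgrades the trivial $L^1$-bound $\int_{\Tt^d}m_n\,dx=1$ to genuine uniform integrability via de la Vall\'ee-Poussin. The second delicate point is the bookkeeping in the H\"older interpolation, where the admissible exponents $q=\gamma/\alpha$ and $q=\gamma(1+\theta)/(\alpha+\theta)$ are dictated exactly by the requirement that the power of $m_n$ produced matches the available integrability ($L^1$, resp. $L^{1+\theta}$); once the bounds are in place, the conclusion is a routine application of the lower semi-continuity furnished by Corollary~\ref{barJwlsc}.
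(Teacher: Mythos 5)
Your proposal is correct and follows essentially the same route as the paper: the direct method with uniform bounds from the minimizing sequence, de la Vall\'ee Poussin/Dunford--Pettis (resp.\ reflexivity of $L^{1+\theta}$) for the densities, interpolation with the exponents $q=\gamma/\alpha$ and $q=\gamma(1+\theta)/(\alpha+\theta)$ for the gradients, Poincar\'e--Wirtinger, and Corollary~\ref{barJwlsc} for the lower semi-continuity. The only cosmetic difference is that you split $|P+\nabla u_n|^q$ via the integral H\"older inequality, while the paper uses the pointwise Young inequality with the same conjugate exponents; the two computations are equivalent.
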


\begin{proof}
We start by observing that for any  $q\geq 1$ and \(r\geq 1\), we have   
\begin{equation}
\label{mmzfinite}
-\sup_{\Tt^d} V + G(1) \leq \inf_{(u,m)\in \Aa_{q,r}} \bar J[u,m] \leq \frac{|P|^\gamma}{\gamma
(\alpha-1) } + \Vert V\Vert_{L^1(\Tt^d)} + G(1), 
\end{equation}
using the condition \(\int_{\Tt^d} m \, dx =1\),
 the convexity of \(G\) together with Jensen's inequality,
 and the non-negativeness of \(\bar f\)
to obtain the lower bound; to obtain the upper bound, we use  \(u=0\) and \(m=1\) as test functions.

Let  \(1<\alpha\leq\gamma\), and set \(q=\tfrac{\gamma}{\alpha}\)
and \(r=1\).  Let \((u_n,m_n)_{n\in\Nn}\subset  \Aa_{q,1}\)
be an infimizing sequence for \eqref{mmz}; that is, a
sequence \((u_n,m_n)_{n\in\Nn}\subset  \Aa_{q,1}\) such
that
\begin{equation}\label{infseq}
\begin{aligned}
\liminf_{n\to\infty} \bar J[u_n,m_n] = \inf_{(u,m)\in \Aa_{q,1}}
\bar J[u,m].
\end{aligned}
\end{equation}
Extracting a subsequence if necessary, we may assume
that the lower limit on the left-hand side of \eqref{infseq}
is a limit and, in view of \eqref{mmzfinite},
\begin{equation*}
\begin{aligned}
\sup_{n\in\Nn} \left|\bar J[u_n,m_n]\right| \leq C
\end{aligned}
\end{equation*}
for some positive constant \(C\). This estimate, the
condition \(\int_{\Tt^d} m_n \, dx = 1\), and the non-negativeness
of \(\bar f\) and of \(\int_{\Tt^d}\left[G(m_n) - G(1)\right]dx\) yield
\begin{equation}
\label{boundsDum}
\begin{aligned}
\int_{\Tt^d} \bar f(\nabla u_n, m_n)\,dx \leq C+|G(1)|+ \Vert V\Vert_\infty
\enspace \text{ and } \int_{\Tt^d}  G(m_n)\,dx \leq C+2|G(1)|+ \Vert V\Vert_\infty
\end{aligned}
\end{equation}
for all \(n\in\Nn\). Recalling the definition of \(\bar
f\), the first condition in \eqref{boundsDum} implies
that \(m_n>0\) a.e.\! in \(U_n=\{x\in\Tt^d\!: \, \nabla u_n\not=-P\}\) and 
\begin{equation*}
\begin{aligned}
\int_{\Tt^d} \bar f(\nabla u_n, m_n)\,dx = \int_{U_n}
\frac{|P+\nabla u_n|^\gamma}{\gamma
(\alpha-1) m_n^{\alpha-1}}  \, dx \leq C+|G(1)|+ \Vert V\Vert_\infty.
\end{aligned}
\end{equation*}
Recalling that \(q=\tfrac{\gamma}{\alpha}\)
and using the preceding estimate and  Young's inequality,
we obtain
\begin{equation}
\label{Dun1}
\begin{aligned}
\int_{\Tt^d} |P+\nabla u_n|^q \, dx&= \int_{U_n} |P+\nabla u_n|^{\frac{\gamma}{\alpha}} \, dx = \int_{U_n} \frac{|P+\nabla u_n|^{\frac{\gamma}{\alpha}}}{m_n^{\frac{\alpha-1}{\alpha}}}
m_n^{\frac{\alpha-1}{\alpha}}\, dx\\ 
& \leq \frac{1}{\alpha}
\int_{U_n} 
\frac{|P+\nabla u_n|^\gamma}{ m_n^{\alpha-1}}  \, dx + \frac{\alpha-1}{\alpha}
\int_{\Tt^d} m_n \, dx \\
&\leq \frac{\alpha-1}{\alpha} \left[\gamma
(C+|G(1)|+ \Vert V\Vert_\infty ) + 1\right].
\end{aligned}
\end{equation}
Assume now that \(G\) satisfies
($\mathcal{G}$1) and that   \(\alpha<\gamma\).  Note that \(q>1\) in this case. 
 Extracting a subsequence if necessary,  from \eqref{Dun1} together with Poincar\'e--Wirtinger's inequality and
from the
second estimate in  \eqref{boundsDum} together with ($\mathcal{G}$1) and  De la Vall\'ee Poussin's criterion, there exists 
\((\bar u,\bar m) \in \Aa_{%\tfrac\gamma\alpha
q,1}\) such that
\begin{equation*}
\begin{aligned}
u_n \rightharpoonup \bar u \text{ weakly in } W^{1,q}(\Tt^d)
\enspace \text{ and }\enspace  m_n \rightharpoonup \bar m \text{ weakly in } L^1(\Tt^d).
\end{aligned}
\end{equation*}
Thus, invoking Corollary~\ref{barJwlsc}, we have
\begin{equation*}
\begin{aligned}
\inf_{(u,m)\in
\Aa_{q,1}}
\bar J[u,m] \leq \bar J[\bar u,\bar m] &\leq\liminf_{n\to\infty}
 \bar J[u_n,m_n] = \inf_{(u,m)\in
\Aa_{q,1}}
\bar J[u,m].
\end{aligned}
\end{equation*}
Hence, recalling that \(q=\gamma/\alpha\), we conclude that
\((\bar u,\bar m) \in \Aa_{%\tfrac\gamma\alpha
\gamma/\alpha,1}\) satisfies
\begin{equation*}
\begin{aligned}
\bar J[\bar u,\bar m] = \min_{(u,m)\in
\Aa_{\gamma/\alpha,1}}
\bar J[u,m].
\end{aligned}
\end{equation*}

Assume now that  ($\mathcal{G}$2) holds. Then, in
particular, ($\mathcal{G}$1) holds.
Let  
\(1<\alpha\leq\gamma\), and set
\(q=\gamma({1+\theta})/({\alpha+\theta})\)
and \(r=1+\theta\); note that \(q>\tfrac{\gamma}{\alpha}\geq
1\). 
Let  \((u_n,m_n)_{n\in\Nn}\subset
 \Aa_{q,r}\)
be an infimizing sequence for \eqref{mmz}. Arguing as above, we conclude
that \eqref{boundsDum}
holds. Hence,
using the definition of \(\bar f\) and ($\mathcal{G}$2), we obtain%
\begin{equation}\label{betterDunmn}
\begin{aligned}
\sup_{n\in\Nn}  \int_{U_n}
\frac{|P+\nabla u_n|^\gamma}{ m_n^{\alpha-1}}  \, dx <\infty\enspace \text{
and } \enspace\int_{\Tt^d}  m_n^{\theta+1}\,dx
<\infty,
\end{aligned}
\end{equation}
where, as before,  \(U_n=\{x\in\Tt^d\!: \, \nabla u_n\not=-P\}\)
and  \(m_n>0\) a.e.\! in \(U_n\).
     
Set \(a=\frac{(\alpha-1)(1+\theta)}{\alpha+\theta}\),
\(b=\frac{\alpha+\theta}{1+\theta}\),
and \(b'=\frac{b}{b-1} = \frac{\alpha+\theta}{\alpha-1}\).
Note that \(b,\, b'>1\), \(ab = \alpha-1\),
\(qb = \gamma\),
and \(ab'=r\). Then, arguing as in \eqref{Dun1}
and using \eqref{betterDunmn},
we obtain
\begin{equation*}
\begin{aligned}
\sup_{n\in\Nn} \int_{\Tt^d} |P+\nabla u_n|^q \, dx &= \sup_{n\in\Nn} \left(
\int_{U_n} \frac{|P+\nabla u_n|^{q}}{m_n^a} 
m_n^{a}\, dx\right) \\ 
&\leq \sup_{n\in\Nn}
\left( \frac{1}{b}
\int_{U_n} 
\frac{|P+\nabla u_n|^{qb}}{ m_n^{ab}}
 \, dx + \frac{1}{b'}
\int_{\Tt^d} m_n^{ab'} \, dx \right) <\infty.
\end{aligned}
\end{equation*}
Reasoning once more as in the preceding
case, we conclude that there exists \((\bar u,\bar m) \in \Aa_{
{\gamma(1+\theta)}/{(\alpha + \theta)},1+\theta}\) satisfying 
\begin{equation*}
\begin{aligned}
\bar J[\bar u,\bar m] = \min_{(u,m)\in
\Aa_{
{\gamma(1+\theta)}/{(\alpha + \theta)},1+\theta}}
\bar J[u,m]. 
\end{aligned} \qedhere
\end{equation*}

\end{proof}

\begin{remark}\label{onV}
If ($\mathcal{G}$2) holds, then
 Theorem~\ref{thm:exist1} remains valid under the weaker
 assumption \(V\in L^{\frac{\theta+1}{\theta}}(\Tt^d)\)
 with \(\sup_{\Tt^d} V \in \Rr\).
\end{remark}

Before proving the existence of solutions
in the case in which \(\alpha=\gamma\)
and \(G\) satisfies
Assumption~($\mathcal{G}$1) in Theorem~\ref{thm:exist1},
we briefly recall some properties of
the space, \(BV(\Tt^d)\), of functions of bounded variation in \(\Tt^d\). 

We
say that \(u\in BV(\Tt^d) \) if \(u\in
L^1(\Tt^d)\) and its distributional derivative, \(Du\),  belongs to \(\Mm(\Tt^d;\Rr^d)\);
that is, there exists a Radon measure,
\(Du \in \Mm(\Tt^d;\Rr^d) \), such that
for all \(\phi\in C^1(\Tt^d)\), we have
\begin{equation*}
\begin{aligned}
\int_{\Tt^d} u(x) \nabla \phi(x)\, dx
= - \int_{\Tt^d} \phi(x) \, dDu(x).
\end{aligned}
\end{equation*}
The space \(BV(\Tt^d)\) is a Banach
space when endowed with the norm \(\Vert
u \Vert_{BV(\Tt^d)}= \Vert u\Vert_{L^1(\Tt^d)}
+\ \Vert Du\Vert\). Moreover,
we have the following compactness property.
If \((u_n)_{n\in\Nn}\) is such that
\(\sup_{n\in\Nn} \Vert
u_n \Vert_{BV(\Tt^d)} < \infty\), then,
extracting a subsequence if necessary,
there exists \(u\in BV(\Tt^d)\) such
that \((u_n)_{n\in\Nn}\) weakly-\(\star\)
converges to \(u\) in \(BV(\Tt^d)\),
written \(u_n\weaklystar u\) weakly-\(\star\)
in \(BV(\Tt^d)\); that is, \(u_n \to
u \) (strongly) in \(L^1(\Tt^d)\) and
\(Du_n\weaklystar Du\) weakly-\(\star\)
in \(\Mm(\Tt^d;\Rr^d)\).

Given \(u\in BV(\Tt^d)\), the Radon-Nikodym derivative of \(Du\)
with respect to the \(d\)-dimensional
Lebesgue
 measure is denoted by \(\nabla u\)
 and the singular part of the Lebesgue--Besicovitch
decomposition
of
 \(D u\) with respect to the \(d\)-dimensional
Lebesgue
 measure is denoted by \(D^su\);
 thus, 
\begin{equation*}
\begin{aligned}
Du= \nabla u \Ll^d\lfloor \Tt^d + D^s
u
\end{aligned}
\end{equation*}
stands for the Lebesgue--Besicovitch
decomposition
of
 \(D u\) with respect to the \(d\)-dimensional
Lebesgue
 measure. By the Polar decomposition
theorem, we have that \(\big|\frac{dD^s
u}{d
\Vert D^s u\Vert}(x)\big|=1\) for
\(\Vert D^s u\Vert\)-a.e.\! \(x\in\Tt^d\).
 
Finally, we note  that   \(u\in
BV(\Tt^d) \) belongs to 
 \( W^{1,1}(\Tt^d)\) if and only if
 \(D^s u \equiv0\). In this case,   \(Du = \nabla u \Ll^d\lfloor
\Tt^d\), where \(\nabla u\) is the usual (weak) gradient
of \(u\); moreover, in that case, \(\Vert Du\Vert(\Tt^d)
= \int_{\Tt^d} |\nabla u|\,dx\).

\begin{theorem}\label{thm:exist2}
Assume that \(\alpha=\gamma\),   \(V\in L^\infty(\Tt^d)\),
and  \(G\) satisfies Assumption~($\mathcal{G}$1) in Theorem~\ref{thm:exist1}.
Then, the minimization problem \eqref{mmz}
has a solution
\((u,m) \in \Aa_{1,1}\).
\end{theorem}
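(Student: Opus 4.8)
The plan is to run the direct method of the calculus of variations in $BV(\Tt^d)\times L^1(\Tt^d)$, using the measure-theoretic lower semicontinuity of Proposition~\ref{slscM} to force the gradient of the limit to have no singular part, so that the candidate minimizer actually lies in $\Aa_{1,1}$. First I would note that, because $\alpha=\gamma>1$, the estimate \eqref{mmzfinite} with $q=r=1$ shows that $\inf_{\Aa_{1,1}}\bar J$ is finite. I then fix an infimizing sequence $(u_n,m_n)_{n\in\Nn}\subset\Aa_{1,1}$ and, passing to a subsequence, assume $\sup_n|\bar J[u_n,m_n]|\leq C$. Repeating verbatim the derivation of \eqref{boundsDum}, using $\bar f\geq0$, Jensen's inequality for the convex $G$, the constraint $\int_{\Tt^d}m_n\,dx=1$, and $V\in L^\infty$, I obtain $\int_{\Tt^d}\bar f(\nabla u_n,m_n)\,dx\leq C$ and $\int_{\Tt^d}G(m_n)\,dx\leq C$.

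The compactness then splits into two parts. For the densities, coercivity ($\mathcal{G}$1) together with the uniform bound on $\int_{\Tt^d}G(m_n)\,dx$ and De la Vall\'ee Poussin's criterion yields equi-integrability, so that, up to a subsequence, $m_n\rightharpoonup\bar m$ weakly in $L^1(\Tt^d)$ with $\bar m\geq0$ and $\int_{\Tt^d}\bar m\,dx=1$. For $u_n$, I would reproduce the Young-inequality computation of \eqref{Dun1} with $q=\gamma/\alpha=1$: writing $|P+\nabla u_n|=\frac{|P+\nabla u_n|}{m_n^{(\gamma-1)/\gamma}}\,m_n^{(\gamma-1)/\gamma}$ on $U_n$ and using $\int_{\Tt^d}\bar f(\nabla u_n,m_n)\,dx\leq C$ together with $\int_{\Tt^d}m_n\,dx=1$, I get $\int_{\Tt^d}|\nabla u_n|\,dx\leq C$. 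Combined with $\int_{\Tt^d}u_n\,dx=0$ and the Poincar\'e--Wirtinger inequality, this bounds $(u_n)$ in $W^{1,1}(\Tt^d)\subset BV(\Tt^d)$, so, along a further subsequence, $u_n\weaklystar\bar u$ weakly-$\star$ in $BV(\Tt^d)$, with $u_n\to\bar u$ in $L^1(\Tt^d)$, $Du_n\weaklystar D\bar u$ in $\Mm(\Tt^d;\Rr^d)$, and $\int_{\Tt^d}\bar u\,dx=0$.

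The crux is simultaneously passing to the limit in the $\bar f$-term and excluding a singular gradient. Since each $u_n\in W^{1,1}$, one has $\nabla u_n\,\mathcal{L}^d=Du_n\weaklystar D\bar u$, while $m_n\,\mathcal{L}^d\weaklystar\bar m\,\mathcal{L}^d$; hence $(\nabla u_n,m_n)\mathcal{L}^d\lfloor\Tt^d\weaklystar\vartheta:=(D\bar u,\bar m\,\mathcal{L}^d)$ weakly-$\star$ in $\Mm(\Tt^d;\Rr^d\times\Rr^+_0)$, whose Lebesgue--Besicovitch decomposition is $\vartheta=(\nabla\bar u,\bar m)\mathcal{L}^d\lfloor\Tt^d+\vartheta_s$ with $\vartheta_s=(D^s\bar u,0)$, the second component carrying no singular part because $\bar m\in L^1$. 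Proposition~\ref{slscM} then gives
\[
\liminf_{n\to\infty}\int_{\Tt^d}\bar f(\nabla u_n,m_n)\,dx\geq\int_{\Tt^d}\bar f(\nabla\bar u,\bar m)\,dx+\int_{\Tt^d}\bar f^\infty\Big(\tfrac{d\vartheta_s}{d\Vert\vartheta_s\Vert}\Big)\,d\Vert\vartheta_s\Vert .
\]
By the polar decomposition $\frac{d\vartheta_s}{d\Vert\vartheta_s\Vert}=(\sigma,0)$ with $|\sigma|=1$ $\Vert\vartheta_s\Vert$-a.e., and by \eqref{barfrec} one has $\bar f^\infty(\sigma,0)=+\infty$ since $\sigma\neq0$. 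As the left-hand side is finite, this forces $\Vert\vartheta_s\Vert(\Tt^d)=0$, that is $D^s\bar u\equiv0$ and $\bar u\in W^{1,1}(\Tt^d)$, and the recession term vanishes, leaving $\liminf_n\int_{\Tt^d}\bar f(\nabla u_n,m_n)\,dx\geq\int_{\Tt^d}\bar f(\nabla\bar u,\bar m)\,dx$.

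It remains to assemble the functional. The linear term converges, $\int_{\Tt^d}Vm_n\,dx\to\int_{\Tt^d}V\bar m\,dx$, since $V\in L^\infty$ and $m_n\rightharpoonup\bar m$ in $L^1$, while $\int_{\Tt^d}G(m_n)\,dx$ is weakly $L^1$-lower semicontinuous by convexity of $G$ (as in Proposition~\ref{slscLq}). Adding these to the previous inequality gives $\bar J[\bar u,\bar m]\leq\liminf_n\bar J[u_n,m_n]=\inf_{\Aa_{1,1}}\bar J$, and since $(\bar u,\bar m)\in\Aa_{1,1}$ it is a minimizer. I expect the singular-part exclusion through the recession function $\bar f^\infty$ to be the only delicate point; everything else is the routine direct method, the novelty being that the infinite penalty $\bar f^\infty(p,0)=+\infty$ for $p\neq0$ automatically upgrades the $BV$ limit to a $W^{1,1}$ admissible competitor.
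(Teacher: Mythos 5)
Your proposal is correct and follows essentially the same route as the paper's own proof: the same compactness setup (the bound \eqref{Dun1} with $q=1$ plus Poincar\'e--Wirtinger giving a $BV$ bound for $u_n$, and ($\mathcal{G}$1) with De la Vall\'ee Poussin giving weak $L^1$ compactness for $m_n$), the same application of Proposition~\ref{slscM} to the joint limit measure $(D\bar u,\bar m\,\mathcal{L}^d\lfloor\Tt^d)$ with singular part $(D^s\bar u,0)$, and the same use of the recession function $\bar f^\infty$ together with the polar decomposition to force $\Vert D^s\bar u\Vert\equiv 0$, upgrading $\bar u$ to $W^{1,1}(\Tt^d)$ so that $(\bar u,\bar m)\in\Aa_{1,1}$ is an admissible minimizer.
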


\begin{proof}
As at the beginning of the proof of
Theorem~\ref{thm:exist1}, let \((u_n,m_n)_{n\in\Nn}\subset
 \Aa_{1,1}\)
be an infimizing sequence for \eqref{mmz};
that is, a
sequence \((u_n,m_n)_{n\in\Nn}\subset
 \Aa_{1,1}\) satisfying \eqref{infseq}.
 Observe that \eqref{boundsDum} and
 \eqref{Dun1} are valid for \(q=1\)
 and \(r=1\). Thus, extracting a subsequence if necessary,
 from \eqref{Dun1} together with Poincar\'e--Wirtinger's
inequality and
from the
second estimate in  \eqref{boundsDum}
together with ($\mathcal{G}$1), there exists 
\((\bar u,\bar m) \in BV(\Tt^d)\times
L^1(\Tt^d)\) such that  
\begin{equation*}
\begin{aligned}
u_n \weaklystar \bar u \text{ weakly-\(\star\)
in }BV(\Tt^d)
\enspace \text{ and }\enspace  m_n \rightharpoonup
\bar m \text{ weakly in } L^1(\Tt^d).
\end{aligned}
\end{equation*}

We claim that \((\bar u,\bar m) \in\Aa_{1,1}
\). We first observe that because \((u_n,m_n)_{n\in\Nn}\subset
 \Aa_{1,1}\),
  the above
weak convergences imply that \(\bar
m \geq 0\) a.e.\! in \(\Tt^d\), \(\int_{\Tt^d}
\bar m\, dx =1\), \(\int_{\Tt^d} \bar
u\,
dx =0\), and \((\nabla u_n,m_n)\mathcal{L}^d\lfloor
\Tt^d \weaklystar (D\bar u,\bar m\mathcal{L}^d\lfloor
\Tt^d) \) weakly-\(\star\) in \(\Mm(\Tt^d;\Rr^d\times\Rr^+_0)\).
We further observe  that the Lebesgue--Besicovitch decomposition
of
 \((D\bar u,\bar m\mathcal{L}^d\lfloor
\Tt^d)\) with respect to the \(d\)-dimensional
Lebesgue
 measure is  
\begin{equation*}
\begin{aligned}
(D\bar u,\bar m\mathcal{L}^d\lfloor
\Tt^d) = (\nabla\bar  u, \bar m )\mathcal{L}^d\lfloor
\Tt^d + (D^s\bar u,0)
\end{aligned}
\end{equation*}
and that \(\Vert (D^su,0)\Vert = \Vert D^su\Vert \). Thus, by Proposition~\ref{slscM},
it follows that
\begin{equation*}
%\label{slscM}
\begin{aligned}
\liminf_{n\to\infty} \int_{\Tt^d} \bar
f(\nabla u_n,m_n)\, dx \geq \int_{\Tt^d}
\bar
f\left( \nabla\bar  u, \bar m\right) dx + \int_{\Tt^d}
\bar
f^\infty\left( \frac{dD^s\bar u}{d
\Vert D^s\bar u\Vert}
,0\right) d \Vert D^s\bar u\Vert(x).
\end{aligned}
\end{equation*}
Because \(\bar f\) and \(\bar f^\infty\)
are nonnegative functions, from the
first
uniform estimate in \eqref{boundsDum},
it follows that   
\begin{equation*}
\begin{aligned}
 \int_{\Tt^d}
\bar
f^\infty\left( \frac{dD^s\bar u}{d
\Vert D^s\bar u\Vert}
,0\right) d \Vert D^s\bar u\Vert(x)
\leq C+|G(1)|+ \Vert V\Vert_\infty.
\end{aligned}
\end{equation*}
In view of the definition of \(\bar
f^\infty\) and the fact that \(\big|\frac{dD^s\bar u}{d
\Vert D^s\bar u\Vert}(x)\big|=1\) for
\(\Vert D^s\bar u\Vert\)-a.e.\! \(x\in\Tt^d\), this last estimate
is only possible if \( \Vert D^s\bar u\Vert \equiv 0\). Hence, also \(D^s\bar
u\equiv
0\). This proves that  \(\bar u\in W^{1,1}(\Tt^d)\).
Thus,  \((\bar u,\bar m) \in\Aa_{1,1}
\) and 
\begin{equation*}
%\label{slscM}
\begin{aligned}
\liminf_{n\to\infty} \int_{\Tt^d} \bar
f(\nabla u_n,m_n)\, dx \geq \int_{\Tt^d}
\bar
f\left( \nabla\bar  u, \bar m\right)
dx.
\end{aligned}
\end{equation*}
Because we also have
\begin{equation*}
%\label{slscM}
\begin{aligned}
\liminf_{n\to\infty} \int_{\Tt^d} \left[
-V m_n +G(m_n)\right] dx \geq \int_{\Tt^d}
\left[-V m +G(m)\right] dx,
\end{aligned}
\end{equation*}
arguing as in the proof of
Theorem~\ref{thm:exist1}, we  conclude that
\((\bar u,\bar m) \in \Aa_{%\tfrac\gamma\alpha
1,1}\) satisfies
\begin{equation*}
\begin{aligned}
\bar J[\bar u,\bar m] = \min_{(u,m)\in
\Aa_{1,1}}
\bar J[u,m]. 
\end{aligned}\qedhere
\end{equation*}
\end{proof}

\subsection{Uniqueness of minimizers}
\label{uniqmin}
In this subsection, we  study
the uniqueness of solutions 
to the minimization problem \eqref{mmz}.
We show that, in particular,  the solutions provided
by  Theorems~\ref{thm:exist1}
and \ref{thm:exist2} are unique.

\begin{theorem}\label{thm:uniqmin}
Let \(1<\alpha\leq\gamma\) and \(q,r \geq 1\). Assume that \(G\) is strictly
convex in \(\Rr^+_0\) and that \(V\in
L^{\tfrac{r}{r-1}}(\Tt^d)\) is such that \(\sup_{\Tt^d} V \in \Rr\).
Then, there is at most one solution
to \eqref{mmz}.
\end{theorem}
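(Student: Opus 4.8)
The plan is to prove uniqueness by exploiting the strict convexity of the functional $\bar J$ on the constraint set $\Aa_{q,r}$, combined with a standard convex-analysis contradiction argument. First I would recall that, by Lemma~\ref{barfcxlsc}, the integrand $\bar f(p,m)$ is convex in $(p,m)\in\Rr^d\times\Rr^+_0$, so the map $(u,m)\mapsto \int_{\Tt^d}\bar f(\nabla u,m)\,dx$ is convex; the term $-\int_{\Tt^d} Vm\,dx$ is linear in $m$ (hence convex, and finite under the integrability hypothesis $V\in L^{r/(r-1)}(\Tt^d)$); and the term $\int_{\Tt^d} G(m)\,dx$ is strictly convex in $m$ precisely because $G$ is strictly convex. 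Consequently $\bar J$ is a convex functional, and its only possible source of strict behavior is the $G$ term in the $m$-variable. The key point to establish is therefore that if two minimizers share the same $m$-component, they must in fact coincide as pairs.

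Concretely, suppose $(u_1,m_1)$ and $(u_2,m_2)$ are both minimizers, and set $(u_t,m_t)=(1-t)(u_1,m_1)+t(u_2,m_2)$ for $t\in[0,1]$. The set $\Aa_{q,r}$ is convex (the constraints $\int u\,dx=0$, $\int m\,dx=1$, $m\geq0$ are all preserved under convex combinations), so $(u_t,m_t)\in\Aa_{q,r}$. By convexity of $\bar J$ and minimality, $\bar J[u_t,m_t]=(1-t)\bar J[u_1,m_1]+t\bar J[u_2,m_2]$ for all $t$, i.e.\ $\bar J$ is affine along this segment. Comparing the three convex contributions termwise, and using that each is individually convex, forces each to be affine along the segment. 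Applying this to the strictly convex term $\int_{\Tt^d} G(m_t)\,dx$ and invoking the strict convexity of $G$, I conclude that $m_1=m_2$ a.e.\ in $\Tt^d$; denote this common density by $m$.

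It then remains to show $u_1=u_2$. With $m_1=m_2=m$ fixed, affineness of $\int_{\Tt^d}\bar f(\nabla u_t,m)\,dx$ along the segment, together with the convexity of $\bar f(\cdot,m)$ in its first argument, means $\bar f(\cdot,m)$ is affine along the segment $\nabla u_t=(1-t)\nabla u_1+t\nabla u_2$ at a.e.\ point. On the set $\{m>0\}$ the integrand is $\frac{|P+\nabla u|^\gamma}{\gamma(\alpha-1)m^{\alpha-1}}$, which is \emph{strictly} convex in $\nabla u$ because $\gamma>1$; hence $\nabla u_1=\nabla u_2$ a.e.\ on $\{m>0\}$. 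On the set $\{m=0\}$, finiteness of $\bar J$ at the minimizers forces (by the definition \eqref{barf} of $\bar f$) that $\nabla u_1=\nabla u_2=-P$ a.e.\ there. Thus $\nabla u_1=\nabla u_2$ a.e.\ in $\Tt^d$, and the normalization $\int_{\Tt^d} u_i\,dx=0$ upgrades this to $u_1=u_2$.

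The main obstacle I anticipate is the careful handling of the degenerate set $\{m=0\}$, where $\bar f$ takes the extended-real-valued form in \eqref{barf} and the strict convexity in $\nabla u$ is lost. One must argue that on this set both minimizers are pinned to $\nabla u=-P$ (otherwise $\bar J$ would be infinite, contradicting minimality), so that the gradients still agree there; a secondary subtlety is justifying the termwise passage from affineness of the \emph{sum} $\bar J$ along the segment to affineness of each term, which relies on each term being separately convex so that no convexity can be ``borrowed'' between them to cancel strict convexity elsewhere. Once these points are settled, the conclusion $u_1=u_2$, $m_1=m_2$ is immediate.
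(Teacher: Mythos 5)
Your proposal is correct and follows essentially the same route as the paper's proof: both exploit convexity of $\bar f$ (Lemma~\ref{barfcxlsc}) along the segment (the paper just uses the midpoint), deduce termwise equality from the fact that individually convex terms cannot compensate one another, use strict convexity of $G$ to get $m_1=m_2$, and then conclude $\nabla u_1=\nabla u_2$ via strict convexity of $|\cdot|^\gamma$ on $\{m>0\}$ together with the pinning $\nabla u_i=-P$ on $\{m=0\}$ forced by finiteness of the minimum. The two subtleties you flag (the degenerate set and the termwise passage) are exactly the points the paper handles, the latter via nonnegativity of the pointwise convexity defect integrating to zero.
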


\begin{proof} Assume that \((u_1,m_1),(u_2,m_2) \in \Aa_{q,r} \) are such that
\begin{equation*}
\begin{aligned}
\bar J[u_1,m_1] = \bar J[u_2,m_2] =\min_{(u,m)\in \Aa_{q,r}} \bar J[u,m].
\end{aligned}
\end{equation*}
We want to show that \(u_1=u_2\) and
\(m_1=m_2\) a.e.\! in \(\Tt^d\).

 Due to \eqref{mmzfinite}, we
have that \(j_0:= \min_{(u,m)\in \Aa_{q,r}} \bar J[u,m]\in \Rr\). Then, using H\"older's inequality and Jensen's inequality together with the convexity of \(G\) and the
condition \(\int_{\Tt^d} m_1\, dx =1\),
it follows that 
\begin{equation*}
\begin{aligned}
0\leq\int_{\Tt^d} \bar f (Du_1, m_1)\, dx
\leq j_0 + \Vert V\Vert_{L^{\tfrac{r}{r-1}}(\Tt^d)}
 \Vert m_1\Vert_{L^{r}(\Tt^d)} - G(1).
\end{aligned}
\end{equation*}
Thus, \(\bar f (Du_1, m_1)<\infty\)
a.e.\! in \(\Tt^d\). In particular,
\(Du_1= - P\) a.e.\! in \(\{x\in\Tt^d\!:
\, m_1=0\}\). Similarly, \(\bar f (Du_2, m_2)<\infty\)
a.e.\! in \(\Tt^d\) and 
\(Du_2= - P\) a.e.\! in \(\{x\in\Tt^d\!:
\, m_2=0\}\). 

Set \(u=\tfrac{u_1 + u_2}{2}\)
and \(m=\tfrac{m_1 + m_2}{2}\). In view of the convexity
of the function \((p,m)\in\Rr^d \times
\Rr^+_0 \mapsto \bar
f(p,m) - V(x)m + G(m)\) (see Lemma~\ref{barfcxlsc}),
we have
\begin{equation*}
\begin{aligned}
j_0\leq \bar J[u,m] \leq \frac12 \bar J[u_1,m_1]  + \frac12 \bar J[u_2,m_2] = \frac12 j_0
 + \frac12 j_0 = j_0.
\end{aligned}
\end{equation*}
Consequently, \(\bar J[u,m] = j_0\)
and
\begin{equation*}
\begin{aligned}
0&= \frac12 \bar
J[u_1,m_1]  + \frac12 \bar J[u_2,m_2]
- \bar J[u,m]\\
& = \int_{\Tt^d} \Big(\frac12
\bar f(\nabla u_1, m_1) + \frac12
\bar f(\nabla u_2, m_2) - 
\bar f(\nabla u, m)  + \frac12 G(m_1)
+ \frac12 G(m_2) - G(m) \Big)\, dx.
 \end{aligned}
\end{equation*}
Because of the  convexity of  \((p,m)\in\Rr^d \times
\Rr^+_0 \mapsto \bar
f(p,m)  + G(m)\), the integrand
in the last integral is nonnegative.
Hence,
\begin{equation*}
\begin{aligned}
\frac12
\bar f(\nabla u_1, m_1) + \frac12
\bar f(\nabla u_2, m_2) - 
\bar f(\nabla u, m)  + \frac12 G(m_1)
+ \frac12 G(m_2) - G(m)=0
 \end{aligned}
\end{equation*}
a.e. in \(\Tt^d\). Invoking the convexity
of \(\bar f\) and \(G\) once more, the
previous equality implies that
\begin{equation}\label{uniqbarf}
\begin{aligned}
\begin{cases}
\frac12
\bar f(\nabla u_1, m_1) + \frac12
\bar f(\nabla u_2, m_2) - 
\bar f(\nabla u, m) =0 \\
\frac12 G(m_1)
+ \frac12 G(m_2) - G(m)=0
\end{cases}
\end{aligned}
\end{equation}
a.e.\! in \(\Tt^d\). Because \(G\) is
strictly convex, it follows from the
second identity in \eqref{uniqbarf} that \(m_1
= m_2\)  a.e.\! in \(\Tt^d\). Consequently,
the first identity in \eqref{uniqbarf}
reduces to
\begin{equation*}
\begin{aligned}
\begin{cases}
\displaystyle
\frac{\frac12|P+ \nabla u_1|^\gamma
+ \frac12|P+ \nabla u_2|^\gamma - |P+ \nabla u|^\gamma}{\gamma
(\alpha-1) {m_1}^{\alpha-1} } =0  & \text{a.e.\! in
} \{x\in\Tt^d\!:
\, m_1\not=0\}\\
\displaystyle
\nabla u_1= \nabla u_2 = -P & \text{a.e.\! in
} \{x\in\Tt^d\!:
\, m_1=0\}.
\end{cases}
\end{aligned}
\end{equation*}
Because \(\gamma>1\), we conclude that
\(\nabla u_1= \nabla u_2\) a.e.\! in \(\Tt^d\), which, together with the
condition \(\int_{\Tt^d} u_1\,dx = \int_{\Tt^d} u_2\,dx = 1\), yields \(u_1= u_2\)  a.e.\! in
\(\Tt^d\). 
\end{proof}

As an immediate consequence of Theorem~\ref{thm:uniqmin},
we obtain the following result.

\begin{corollary}\label{Cor:uniqmin}
If, in addition to the hypotheses of
Theorem~\ref{thm:exist1} (respectively,
Theorem~\ref{thm:exist2}), we assume
that \(G\) is strictly
convex in \(\Rr^+_0\), then the solution
to \eqref{mmz} provided by Theorem~\ref{thm:exist1} (respectively,
Theorem~\ref{thm:exist2}) is unique.
\end{corollary}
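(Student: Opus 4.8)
The plan is to verify that, once strict convexity of $G$ is added, the full set of hypotheses of Theorem~\ref{thm:uniqmin} is met for the precise admissible class $\Aa_{q,r}$ in which each existence theorem produces its minimizer; uniqueness then follows immediately, and combining it with the already-established existence yields the assertion. Concretely, I would proceed by reading off the relevant exponents, checking the integrability requirement on $V$, and then invoking Theorem~\ref{thm:uniqmin} verbatim.

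First, I would recall the exponents of the admissible classes. In the setting of Theorem~\ref{thm:exist1}, a minimizer is obtained either in $\Aa_{\gamma/\alpha,1}$ (when $1<\alpha<\gamma$, using ($\mathcal{G}$1) alone) or in $\Aa_{\gamma(1+\theta)/(\alpha+\theta),\,1+\theta}$ (when $1<\alpha\leq\gamma$, additionally using ($\mathcal{G}$2)); in the setting of Theorem~\ref{thm:exist2}, the minimizer lies in $\Aa_{1,1}$. In each case $1<\alpha\leq\gamma$ and the exponents $q,r$ satisfy $q,r\geq 1$, which matches the standing requirements of Theorem~\ref{thm:uniqmin}.

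Second, I would check the integrability condition on $V$. Both existence theorems assume $V\in L^\infty(\Tt^d)$. Since $\Tt^d$ has finite measure, $L^\infty(\Tt^d)\subset L^{s}(\Tt^d)$ for every $s\in[1,\infty]$; in particular $V\in L^{r/(r-1)}(\Tt^d)$ for the value of $r$ dictated by the relevant admissible class, with the usual convention $r/(r-1)=\infty$ when $r=1$, in which case the requirement reads $V\in L^\infty(\Tt^d)$ and holds by hypothesis. Moreover $\sup_{\Tt^d}V\leq \Vert V\Vert_{L^\infty(\Tt^d)}<\infty$, so $\sup_{\Tt^d}V\in\Rr$. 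Hence the hypothesis ``$V\in L^{r/(r-1)}(\Tt^d)$ with $\sup_{\Tt^d}V\in\Rr$'' of Theorem~\ref{thm:uniqmin} is satisfied.

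Third, with the additional standing assumption that $G$ is strictly convex on $\Rr^+_0$, all the hypotheses of Theorem~\ref{thm:uniqmin} hold for the corresponding $\Aa_{q,r}$, and therefore \eqref{mmz} admits \emph{at most} one solution in that class. Combining this with the existence guaranteed by Theorem~\ref{thm:exist1} (respectively Theorem~\ref{thm:exist2}) gives exactly one minimizer, as claimed. As for the main obstacle: there is essentially none, since this is a genuine corollary; the only point deserving (minimal) care is the bookkeeping in the second step, namely confirming that the conjugate exponent $r/(r-1)$ attached to each existence theorem's admissible class is dominated by $L^\infty$ on the finite-measure torus, including the boundary case $r=1$.
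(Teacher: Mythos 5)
Your proposal is correct and matches the paper's approach: the paper treats this as an immediate consequence of Theorem~\ref{thm:uniqmin}, exactly as you do, combining the at-most-one-solution conclusion with the existence supplied by Theorem~\ref{thm:exist1} (respectively Theorem~\ref{thm:exist2}). Your explicit bookkeeping of the exponents $q,r$ and the observation that $V\in L^\infty(\Tt^d)\subset L^{r/(r-1)}(\Tt^d)$ with $\sup_{\Tt^d}V\in\Rr$ (including the $r=1$ case) simply spells out the verification the paper leaves implicit.
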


\subsection{The  \(P=0\) case}\label{varexplicit}
Here,  we further characterize the  solutions of \eqref{mmz} when \(P=0\). 

Assume that \(P=0\) and that \((\bar u, \bar m) \in \Aa_{q,r}\) satisfies% 
\begin{equation*}
\bar J[\bar u, \bar m] = \min_{(u,m)\in \Aa_{q,r}} \bar J[u,m].
\end{equation*}
Because this minimum is finite, the definition of \(\bar f\) yields \(\bar m >0\) a.e.\!~in the set \(\{x\in\Tt^d\!: \, \nabla\bar  u \not= 0\}\). Consequently, the inequality \(\bar J[\bar u, \bar m] \leq \bar J[0, \bar m]\) gives
\begin{equation*}
\begin{aligned}
\int_{\{x\in\Tt^d\!: \, \nabla\bar  u \not= 0 \}} \frac{|\nabla \bar u|^\gamma}{\gamma
(\alpha-1) \bar m^{\alpha-1}} \, dx \leq 0.
\end{aligned}
\end{equation*}
This last estimate is possible only if the set \(\{x\in\Tt^d\!: \, \nabla\bar  u \not= 0\}\) has zero measure. Therefore, we conclude that \(\nabla \bar u = 0\) a.e.\!~in \(\Tt^d\), which, together with the restriction \(\int_{\Tt^d} \bar u \, dx = 0\), implies that \(\bar u =0\) a.e.\!~in \(\Tt^d\). Moreover,
we have%
\begin{equation*}
\begin{aligned}
\bar J[0, \bar m] = \min_{(u,m)\in \Aa_{q,r}} \bar J[u,m] \leq \inf_{m\in L^r(\Tt^d), m\geq 0, \atop \int_{\Tt^d} m \, dx =1} \bar J[0,m]  \leq \bar J[0, \bar m].
\end{aligned}
\end{equation*}
Thus, \(\bar m\) satisfies
\begin{equation}\label{mwhenP0}
\begin{aligned}
\tilde J [\bar m] = \min_{m\in
L^r(\Tt^d), m\geq 0, \atop \int_{\Tt^d} m \, dx =1} \tilde J[m], 
\end{aligned}
\end{equation}
where
\begin{equation*}
\begin{aligned}
\tilde J[m] =\int_{\Tt^d} \big(
-V m +G(m)\big)\,
dx. 
\end{aligned}
\end{equation*}

Furthermore, we observe that if $G \in C^{\infty}(\Rr^+)\cap C(\Rr_0^+)$ is coercive and strictly convex then \eqref{mwhenP0} can be solved explicitly. More specifically, the solution, $\bar{m}$, is given by
\begin{equation}\label{eq:explweaksolHgeneral}
	\bar{m}(x)=(G^*)'(V(x)-\Hh),\quad x\in \Tt^d,
\end{equation}
where $\Hh \in \Rr$ is the unique number such that  \(\int_{\Tt^d} \bar m\, dx =1\), and $G^*$ is the Legendre transform of $G$; that is, $G^*(q)=\sup\limits_{m\geq 0} \{q\cdot m -G(m)\}$.

Note that if $G'(0+)=-\infty$ then $(G^*)'(q)>0$ for $q\in \Rr$, and $\bar{m}(x)>0,~x\in\Tt^d$ independently of $V$ and $\Hh$. Thus, the triplet $(0,\bar{m},\Hh)$ defined by \eqref{eq:explweaksolHgeneral} is the unique classical solution of \eqref{main2}.

Alternatively, if $G'(0+)>-\infty$ then $(G^*)'(q)\geq 0$ for $q\in\Rr$, with equality if and only if $q\leq G'(0)$. Therefore, $\bar{m}$ defined by \eqref{eq:explweaksolHgeneral} may vanish at some points for a particular choice of $V$. More precisely, $\bar{m}(x)=0$ at points $x\in \Tt$ for which $V(x) - \Hh\leq G'(0)$.

Next, we take \(G(m)=\frac{m^2}{2}\) and calculate the solution, $\bar{m}$, of \eqref{mwhenP0} using \eqref{eq:explweaksolHgeneral}. We use this solution in
Section~\ref{num}  to validate our numerical method. For this coupling, \(G\), we have that $G^*(q)=\frac{(q^+)^2}{2},~q\in \Rr$. Therefore, according to \eqref{eq:explweaksolHgeneral}, we obtain
\begin{equation}\label{eq:explweaksolH}
\begin{aligned}
\bar m(x) = (V(x) - \overline H)^+,
\end{aligned}
\end{equation}
where \(\overline H \in \Rr\) is such that \(\int_{\Tt^d} \bar m\, dx =1\). In particular, if  \(\inf_{\Tt^d} V \geq \int _{\Tt^d}  V\, dx -1\), then 
\begin{equation*}
%\label{eq:explweaksol}
\begin{aligned}
\bar m(x) = V(x) + 1 - \int _{\Tt^d}  V\, dx.
\end{aligned}
\end{equation*}
Moreover, if  \(\inf_{\Tt^d} V > \int _{\Tt^d}  V\, dx -1\), then \[(\bar u, \bar m,\overline H)= \bigg(0,V+1-\int _{\Tt^d}  V\, dx, -1+\int _{\Tt^d}  V\, dx\bigg)\] is  the unique classical solution of \eqref{main2} in light of Corollary~\ref{Cor:uniqmin}.

\section{The two-dimensional case}
\label{2dcase}

The variational approach considered in the preceding section requires $1<\alpha\leq \gamma$.  However, in the two-dimensional case, if \(0<\alpha<1\) and \(\gamma> 1\), we can use properties of divergence-free vector fields to deduce an
equivalent equation for which our variational method can be applied. 

In this section, the space dimension is always $d=2$. Moreover,  given \(Q=(q_1, q_2) \in\Rr^2\), we set \(Q^\perp=(-q_2,q_1)\).

%First, we define
%\begin{equation}\label{opdiv1}
%F(P, u, m,\alpha, %\gamma)=\frac{|P+Du|^{\gamma}}{m^\alpha}+V(x)-g(m)-\overline{H}. 
%\end{equation}
%The first equation in \eqref{main} holds when $F=0$.

From the second equation in \eqref{main}, there exists a constant vector, $Q\in \Rr^2$, and a scalar function, $\psi$, such that
\begin{equation}\label{divfree1}
m^{1-\alpha}|P+Du|^{\gamma-2}(P+Du)=      Q^\perp+(D\psi)^\perp.
\end{equation}
Consequently,
\begin{equation*}%\label{divfree2}
m^{1-\alpha}|P+Du|^{\gamma-1}=|Q+D\psi|.
\end{equation*}
Raising the prior expression to the power $\gamma'$, where $\gamma'=\frac{\gamma}{\gamma-1}$, and rearranging the terms, we obtain
\[
\frac{|P+Du|^{\gamma}}{m^\alpha}=\frac{|Q+D\psi|^{\gamma'}}{m^{\alpha-(\alpha-1)\gamma'}}.
\] 
Therefore,
\[
\frac{|P+Du|^{\gamma}}{\gamma m^\alpha}+V(x)-g(m)-\overline{H}
=
\frac{|Q+D\psi|^{\gamma'}}{\gamma m^{\talpha}}+V(x)-g(m)-\overline{H}, 
\]
with
\[
\talpha=\alpha-(\alpha-1)\gamma'.
\]
Moreover, from \eqref{divfree1}, we have 
\begin{equation}\label{eq:PperpDuPerp}
P^\perp+(Du)^\perp=
m^{1-\talpha} |Q+D\psi|^{\gamma'-2} (Q+D\psi).
\end{equation}
Accordingly, 
\[
\div(m^{1-\talpha} |Q+D\psi|^{\gamma'-2} (Q+D\psi) )=0.
\]

Thus, \eqref{main} can be rewritten as
\[
\begin{cases}
\frac{|Q+D\psi|^{\gamma'}}{\gamma'm^{\talpha}}+\frac{\gamma}{\gamma'}V(x)- \frac{\gamma}{\gamma'}g(m)=\frac{\gamma}{\gamma'}\overline{H}\\
\div(m^{1-\talpha} |Q+D\psi|^{\gamma'-2} (Q+D\psi) )=0.
\end{cases}
\]
Finally, we notice that if $0<\alpha<1$ and $\gamma>1$, we have
$1<\talpha<\gamma'$. That is, we obtain an equation of the form of
\eqref{main} with exponents $\talpha$ and $\gamma'$ in the place of 
$\alpha$ and $\gamma$. Furthermore, $\talpha$ and $\gamma'$ now belong
to the range where our prior results apply.

\section{Transformations of second-order MFGs}
\label{tsom}

Now, we discuss a method to transform second-order MFGs into a scalar PDE. For $\alpha=0$, we recover the Hopf-Cole transformation, used in the context of
MFGs in \cite{LCDF,ll1, GeRC}, and \cite{MR2928382}, for example,
	and further generalized in \cite{MR3377677}. Moreover, we obtain extensions of these transformations for the case $\alpha>0$. We also make connections between these systems and the calculus of variations.
In Section~\ref{soquadH}, we examine problems with a  
quadratic Hamiltonian and with \(\alpha=1\). Then, in Section~\ref{otherH}, we extend our analysis to more general Hamiltonians and any congestion parameter.

\subsection{Quadratic Hamiltonian and $\alpha=1$}\label{soquadH}

Here, we examine  the following elliptic version of \eqref{main} for $\alpha=1$
and $\gamma=2$: \begin{equation}
        \label{mainel}
        \begin{cases}
                -\Delta u+ \frac{|P+Du|^2}{2 m}+V(x)=g(m)+\Hh\\
                -\Delta m -\Delta u=0.   
        \end{cases}
\end{equation}
From the second equation and the periodicity, we get
\[
u=\mu-m 
\]
for some real $\mu$. Accordingly, we replace $u$\ in the first equation in
\eqref{mainel} and obtain
\begin{equation}
        \label{mmm}
        \Delta m+ \frac{|P-Dm|^2}{2m}+V(x)=g(m)+\Hh.
\end{equation}
As we show next, if  $P=0$, the preceding equation is equivalent to the Euler-Lagrange
equation of an integral functional. First, we take $P=0$ and multiply \eqref{mmm}
by $m^{1/2}$. Then, \eqref{mmm} becomes
\[
m^{1/2}\Delta m+ \frac{|Dm|^2}{2m^{1/2}}+V(x)m^{1/2}=m^{1/2}g(m)+\Hh m^{1/2}.
\]
Now, we set $\psi=m^{3/2}$ and conclude that 
\[
\frac 2 3 \Delta \psi+V(x) \psi^{1/3}=g(\psi^{2/3})  \psi^{1/3}+\Hh  \psi^{1/3}.
\]
The foregoing equation is the Euler--Lagrange equation of the functional
\[
\hat J(\psi)=\int_{\Tt^d} \frac{|D\psi|^2}{3}-\frac 3 4 V(x) \psi^{4/3}
+\hat G(\psi)+\frac 4 3\Hh \psi^{4/3}\, dx, 
\]
where, for \(z\geq0\),
\begin{equation*}
\begin{aligned}
\hat G(z) = \int_0^z g(r^{2/3})r^{1/3}\, dr.
\end{aligned}
\end{equation*}

\subsection{Other Hamiltonians}\label{otherH}
Here, we consider the system
\begin{equation}\label{Eq:statcong2}
\begin{cases}
-\Delta u+ m^{\alpha}H\!\left(\frac{Du+P}{m^{\alpha}}\right)=g(m)+\overline{H}-V(x)\\
-\Delta m-\div \left(m D_pH\!\left(\frac{Du+P}{m^{\alpha}}\right)\right)=0,
\end{cases}
\end{equation}
where the Hamiltonian,  $H\colon\Rr^d\to\Rr$, is the Legendre transform of a strictly convex and coercive Lagrangian, $L\colon\Rr^d\to\Rr$; that is, 
\[
H(p)=\sup_{v\in\Rr^d} \{-v\cdot p - L(v)\}.
\]
In the preceding definition,
the maximizer, $v^*$, is given by
\begin{equation}
\label{eq:LTmax}
\begin{aligned}
 p=-D_vL(v^*(p)) \hbox{ and } v^*(p)=-D_pH(p).
\end{aligned}
\end{equation}
Hence,
\begin{equation}
\label{eq:HbyLT}
\begin{aligned}
H(p)=v^*(p)\cdot D_vL(v^*(p))-L(v^*(p)).
\end{aligned}
\end{equation}

Next, we relax \eqref{Eq:statcong2} by replacing $Du$ by a function, $w\colon\Tt^d\to\Rr^d$. Then, 
the second equation in \eqref{Eq:statcong2} can be written as
\begin{equation*}
%\label{diveq}
\div\left(Dm+mD_pH\!\left(\frac{w+P}{m^{\alpha}}\right)\right)=0.
\end{equation*}
Accordingly, we introduce the divergence-free vector field $Q\colon\Tt^d\to\Rr^d $ given by
\[
Q:=-Dm-mD_pH\!\left(\frac{w+P}{m^{\alpha}}\right)\!.
\] 
Therefore, we obtain the system
\begin{equation}\label{Eq:statcong2'}
\begin{cases}
-\div(w)+ m^{\alpha}H\!\left(\frac{w+P}{m^{\alpha}}\right)=g(m)+\overline{H}-V(x),\\
-Dm-mD_pH\!\left(\frac{w+P}{m^{\alpha}}\right)=Q,\\
\div(Q)=0.
\end{cases}
\end{equation}
Note that if  $(u,m, \Hh)$ is a solution of \eqref{Eq:statcong2}, then  $(w, m,\Hh)=(Du, m,\Hh)$ solves \eqref{Eq:statcong2'}. The converse implication does  not necessarily hold and will be discussed in Remark~\ref{Rmk:converse} below. 

Now, we
note that the second equation in  \eqref{Eq:statcong2'} gives
\[
\frac{Dm+Q}{m}=-D_pH\!\left(\frac{w+P}{m^{\alpha}}\right)\!.
\] 
Hence, using the second identity in \eqref{eq:LTmax} with
  $p=\frac{w+P}{m^{\alpha}}$, we obtain  $\frac{Dm+Q}{m}=v^*(\frac{w+P}{m^{\alpha}})$.
Consequently, the first identity in \eqref{eq:LTmax} gives
\[
\frac{w+P}{m^{\alpha}}=-D_vL\!\left(\frac{Dm+Q}{m}\right)
\]
and, in view of \eqref{eq:HbyLT},
\[
 H\!\left(\frac{w+P}{m^{\alpha}}\right)=\frac{Dm+Q}{m}\cdot D_vL\!\left(\frac{Dm+Q}{m}\right)-L\!\left(\frac{Dm+Q}{m}\right)\!.
\]
Using the two preceding identities in \eqref{Eq:statcong2'}, 
we obtain
\begin{equation}\label{Eq:statcong2b}
\begin{cases}
\div \left(m^{\alpha} D_vL\!\left(\frac{Dm+Q}{m}\right)\right)+ m^{\alpha}\left(\frac{Dm+Q}{m}\cdot D_vL\!\left(\frac{Dm+Q}{m}\right)-L\!\left(\frac{Dm+Q}{m}\right)\right)=g(m)+\overline{H}-V(x),\\
w=-P-m^{\alpha}D_vL(\frac {Dm+Q} m),\\
\div(Q)=0.
\end{cases}
\end{equation}

Next, we consider a few cases when the system \eqref{Eq:statcong2b} can be further simplified.

Assume that  $Q=0$ and 
$H(p)=\frac{1}{\gamma}|p|^{\gamma}$.
In this case,  we obtain a solution of \eqref{Eq:statcong2}  by solving \eqref{Eq:statcong2'}. To see this, we observe first that 
$L(v)=\frac{1}{\gamma'}|v|^{\gamma'}$,
where \(\gamma'=\tfrac{\gamma}{\gamma-1}\); consequently, \eqref{Eq:statcong2b} 
becomes
\begin{equation*}%\label{Eq:statcongpoly}
\div\left(m^{\alpha-\gamma'+1}|Dm|^{\gamma'-2} Dm\right)+\frac 1 {\gamma} m^{\alpha-\gamma'}|Dm|^{\gamma'}=g(m)+\overline{H}-V(x).
\end{equation*}
Next, for $\beta$ to be selected later, we use the change of variables $m=\psi^{\beta}$ to get
\[
\beta^{\gamma'-1}\div\left(\psi^{\alpha\beta-\gamma'+1}|D\psi|^{\gamma'-2} D\psi\right)+
\frac {\beta^{\gamma'}} {\gamma} \psi^{\alpha\beta-\gamma'}|D\psi|^{\gamma'}=g(\psi^{\beta})+\overline{H}-V(x).
\]
We rewrite the preceding equation as
\begin{equation*}%\label{eq.prepLap}
\begin{aligned}
&\beta^{\gamma'-1}\psi^{\alpha\beta-\gamma'+1}\div\left(|D\psi|^{\gamma'-2} D\psi\right)+\beta^{\gamma'-1}\left(\alpha\beta-\gamma'+1+ \tfrac {\beta} {\gamma}\right) \psi^{\alpha\beta-\gamma'}|D\psi|^{\gamma'}\\
&\quad=g(\psi^{\beta})+\overline{H}-V(x).
\end{aligned}
\end{equation*}
Now, we choose $\beta$ such that the second term on the left-hand side of
the previous identity vanishes; that is, 
\begin{equation}
\label{eq:betaQ0}
\begin{aligned}
\beta=\frac{\gamma'-1}{\alpha+1/\gamma}
= \frac{\gamma'}{\alpha\gamma + 1}.
\end{aligned}
\end{equation}
Accordingly, we obtain
\begin{equation}\label{eq.pLap}
\beta^{\gamma'-1}\Delta_{\gamma'} \psi=\left(g(\psi^{\beta})+\overline{H}-V(x)\right)\psi^{\frac{\beta}{\gamma}},
\end{equation}
where $\Delta_{p}$ is the $p$-Laplacian operator,  $\Delta_{p} \psi=\div (|D\psi|^{p-2}D\psi)$. We note that \eqref{eq.pLap}
is the Euler-Lagrange equation of the functional
\[
\hat J[\psi]=\int_{\Tt^d} \bigg[\beta^{\gamma'-1}\frac{|D\psi|^{\gamma'}}{\gamma'} +\hat{G}(\psi)-\frac{\gamma}{\beta+\gamma}(V(x)-\overline{H})
{\psi^{\frac{\beta +\gamma}{\gamma}} }\bigg] dx 
\]
where \(\beta\) is given by \eqref{eq:betaQ0}
and  $\hat{G}(z):=\int_0^z g(r^{\beta})r^{\frac{\beta}{\gamma}}\,dr.$ Note  that the unknown $\overline{H}$ is  determined by the constraint 
\begin{equation*}
\begin{aligned}
\int_{\Tt^d} \psi^{\beta}\,dx=1.
\end{aligned}
\end{equation*}

In particular, for $\gamma=2$, we have
\(\beta=\tfrac2{2\alpha +1}\) and \(m=\psi^{\tfrac2{2\alpha +1}}\), where  $\psi$ solves
\begin{equation}\label{Eq:statcongquad}
\tfrac 2 {(2\alpha+1)}\Delta \psi=\Big(g\big(\psi^{\frac 2 {2\alpha+1}}\big)+\overline{H}-V(x) \Big)\psi^{\frac 1 {2\alpha+1}}.
\end{equation}
As before, \eqref{Eq:statcongquad} is the Euler-Lagrange equation of the functional
\[
\hat J[\psi]=\int_{\Tt^d} \bigg[\frac{|D\psi|^2}{2\alpha+1} +\hat{G}(\psi)-\frac{2\alpha+1}{2(\alpha+1)}(V(x)-\overline{H})\psi^{\frac{2(\alpha+1)}{2\alpha+1}}  \bigg] dx \]
where $\hat{G}(z)=\int_0^z g(r^{\frac{2}{2\alpha+1}})r^{\frac{1}{2\alpha+1}}\,dr$,
and $\Hh$ is chosen such that the constraint
\begin{equation*}
\begin{aligned}
\int_{\Tt^d} \psi^{\frac
        2 {2\alpha+1}}\,dx=1
\end{aligned}
\end{equation*}
holds. 

Furthermore, for $\alpha=0$, \eqref{eq.pLap} corresponds to the generalized Hopf-Cole transformation from \cite{MR3377677}.

\begin{remark}        
In the case without congestion, which
corresponds to  $\alpha=0$, and without any restrictions on either  $Q$ or $L$,
 \eqref{Eq:statcong2b} has the form
        \begin{equation*}%\label{Eq:statcong20}
        \div \left( D_vL\!\left(\tfrac{Dm+Q}{m}\right)\right)+ \tfrac{Dm+Q}{m}\cdot D_vL\!\left(\tfrac{Dm+Q}{m}\right)-L\!\left(\tfrac{Dm+Q}{m}\right)=g(m)+\overline{H}-V(x).
        \end{equation*}
This equation is the Euler-Lagrange equation of the  functional
        \[
       \tilde J[m]=\int_{\Tt^d} \left[mL\!\left(\frac{Dm+Q}{m}\right)-G(m)-V(x)m \right]dx  
        \]
subjected to the constraint  \(\int_{\Tt^d} m\,dx=1\).     
\end{remark}

        %       \item
        %       Let us now consider the case $Q=0$ for $L(v)=\frac{|v|^{\gamma'}}{\gamma'}$. Letting $\alpha= a_1+a_2$, we modify equation \eqref{Eq:statcongpoly}  as follows
        %       \begin{equation}\begin{split}
        %       a_1m^{\alpha-\gamma'}|Dm|^{\gamma'} +m^{a_1}\div \left(m^{a_2-\gamma'+1} |Dm|^{\gamma'}\right)+ m^{\alpha-\gamma'}\frac{|Dm|^{\gamma'}}{\gamma} =g(m)+\overline{H}-V(x).
        %       \end{split}
        %       \end{equation}
        %       
        %       \begin{equation}\begin{split}\label{Eq:statcong2c}
        %       (a_1+1-1/\gamma')m^{a_2-\gamma'}|Dm|^{\gamma'} +\div \left(m^{a_2-\gamma'+1} |Dm|^{\gamma'}\right) =\frac{g(m)+\overline{H}-V(x)}{m^{a_1}}.
        %       \end{split}
        %       \end{equation}
        %       For the above equation to be an Euler-Lagrange equation, we need to take $a_1=\frac{\alpha}{1-\gamma'},\, a_2=\alpha+\frac{\alpha}{\gamma'-1}$. Then the left hand side is the variational derivative of the Lagrangian $\tilde{L}(m, w)= \frac 1 {\gamma'}m^{a_2-\gamma'} |w|^{\gamma'}$. Consequently, \eqref{Eq:statcong2c} is the Euler-Lagrange equation of the functional
        %       \[
        %       J[m]:=\int_{\Tt^d} m^{\frac{\alpha}{\gamma'-1}-\gamma'+1}\frac{|Dm|^{\gamma'}}{\gamma'} +\tilde{G}(m)-(V(x)-\overline{H})\frac{m^{1+\frac{\alpha}{1-\gamma'}}}{1+\frac{\alpha}{1-\gamma'}},\quad \int_{\Tt^d} mdx=1.
        %       \]
        %       where $\tilde{G}(z):=\int_0^z g(r)r^{\frac{\alpha}{\gamma'-1}}dr.$

\begin{remark}\label{Rmk:converse}
        From a solution $(m, w,\Hh)$ to \eqref{Eq:statcong2b}, we recover a solution to \eqref{Eq:statcong2} if and only if $w=-P-m^{\alpha}D_vL(\frac {Dm+Q} m)$ is a gradient of a function,  $u:\Tt^d \to \Rr$. There are two instances when this holds easily; namely:
        \begin{itemize}
        \item[(i)] Assume that  $d=1$
        and $(m, w,\Hh)$
solves \eqref{Eq:statcong2b} with  $\int_{\Tt} w\,dx =0$. Then,  identifying functions
on \(\Tt\)
with periodic functions on  \([0,1]\)
and setting \(u(x):=\int_0^x w(t)\,dt\),
we conclude that $(m, u,\Hh)$
solves \eqref{Eq:statcong2}.
                  
          \item[(ii)] 
                Assume that $Q\equiv0$,   the Lagrangian is quadratic, and
  $\psi>0$  solves \eqref{Eq:statcongquad}.
In this case,  $w=-P-m^{\alpha}D_vL(\frac {Dm} m)=-P-m^{\alpha-1}Dm$ is a gradient if and only if $P=0$. Then, assuming
further that \(P=0\), setting $m:=\psi^{
\frac{2\alpha +1}{2}}$ and  
$u:=-\frac{m^{\alpha}}{\alpha}+c$,
\(c\in\Rr\), we conclude that  $(u,m,
\Hh)$ solves \eqref{Eq:statcong2}, where
\(\Hh\)  is determined by the constraint
\(\int_{\Tt^d} \psi^{\frac
2 {2\alpha+1}}\,dx=1 \). This transformation of $m$ and $u$ generalizes the well-known Hopf-Cole transform to the congestion case. Finally, note that if \(\alpha=1\), we recover the case treated in Section~\ref{soquadH}.
     \end{itemize}
                However, in general, the condition for $w$ to be a gradient is more restrictive. For instance, assume
that $Q\equiv 0 $ and that the Lagrangian is radial,
$L(v)=l(|v|)$ with $l:\Rr_0^+\to \Rr$ of class $C^2$ and $rl''(r)-l'(r)\neq 0$
for all $r>0$. Then, \[w=-P-m^{\alpha}D_vL\Big(\frac {Dm} m\Big)\] is a gradient only if 
        \[
        0=(w_i)_{x_j}-(w_j)_{x_i}=m^{\alpha}\frac{l'(r)-r l''(r)}{r^3}(v_i (v\cdot v_{x_j})-v_j (v\cdot v_{x_i}))
        \]
        for all $ i,j \in \{1,...,
d\}$, where $r=|Dm|/m$ and $v:=Dm$. Consequently,
        $v_i (v\cdot v_{x_j})-v_j (v\cdot v_{x_i})=0$ for all $ i,j \in\{1,..., d\}$, which implies that there is a
scalar function $\lambda\colon\Tt^d\to
\Rr$ such that $v\cdot v_{x_i}=\lambda v_i$    for all \(i\in\{1,...d\}\). Hence, $m$ must satisfy the identity
        \begin{equation*}
        D^2mDm= \lambda Dm.
        \end{equation*}
        This identity is rather restrictive in higher dimensions; thus, in general, the solutions to \eqref{Eq:statcong2b} may not correspond to solutions to \eqref{Eq:statcong2}.
        
        Finally, note that for radially symmetric Lagrangians, $L(v)=l(|v|)$, the only case when $w$ is automatically a gradient is the case when $r l''(r)=l'(r)$ for $r>0$, which corresponds to the quadratic Lagrangian discussed in (ii) above.
\end{remark}
%\vskip0.5cm

\section{Numerical solution for the first-order MFGs with congestion, with $1<\alpha\leq\gamma$% and $\gamma>1$
}
\label{num}

In this section, we compute and analyze numerical solutions for the variational problem \eqref{mmz}. First, in Section~\ref{sub:discretization},
% and \ref{sub:discretevariational},
  we describe the numerical scheme. Then, in Section~\ref{sub:num1d}, we  examine problems in one dimension and discuss the corresponding numerical experiments.
At last, in Section~\ref{sub:num2d}, we perform and discuss our numerical experiments for the two-dimensional case. %For convenience, here we rewrite our variational problem.
% \(\bar J\) is be the functional defined by
%\begin{equation}
%\label{nvpnew2}
%\bar J[u,m]=\int_{\Tt^d} \left[\bar
%f(\nabla u,
%m)-V m +G(m)\right]
%dx,
%\end{equation}
%%
%where, for \((p,m)\in\Rr^d \times \Rr^+_0\),
%%
%\begin{equation}\label{barf2}
%\begin{aligned}
%\bar f (p,m) = \begin{cases}
%\frac{|P+p|^\gamma}{\gamma
%       (\alpha-1) m^{\alpha-1}} & \text{if
%} m\not=0,\\
%+\infty & \text{if } m=0 \text{ and } p\not= -P,\\
%0 &\text{if }  m=0 \text{ and
%} p= -P.
%\end{cases}
%\end{aligned}
%\end{equation}
%%
%We aim at proving the existence and uniqueness
%of solutions to the variational problem
%% 
%\begin{equation}
%\label{mmz2}
%\min_{(u,m)\in \Aa_{q,r}} \bar J[u,m],
%\end{equation}
%where $\bar J$ is given by \eqref{nvpnew2} and
%$\Aa_{q,r}$ is the set
%\[
%\Aa_{q,r}=\left\{(u,m)\in W^{1,q}(\Tt^d)
%\times 
%L^r(\Tt^d):\int_{\Tt^d} u \,dx=0,\int_{\Tt^d} m \,dx=1,m\geq 0\right\}
%\]
%with $q\geq1$ and \(r\geq 1\)  to be chosen later. 

\subsection{Discretization}
\label{sub:discretization}

Here, we detail our numerical scheme. For simplicity, we consider the two-dimensional setting; our methods can easily be adapted to other dimensions. 

We fix an integer, $N\in\Nn$,  and denote by $\Tt_N^2$ the square grid in the two-dimensional torus, $\Tt^2$, with grid size $h=\frac 1 N$. 
 Let $x_{i,j}\in \Tt_N^2$ represent a point with coordinates $(ih,jh)$.
 A grid function is a vector $\varphi  \in\Rr^{N^2}$ whose components,  
 $\varphi_{i,j}$, are determined by the values of a function \(\tilde \varphi:\Tt^2_N \to\Rr\) at \(x_{i,j}\); that is, \(\varphi_{i,j}= \tilde \varphi (x_{i,j})\) for $i,j\in~\{0,\ldots,N-1\}$. Because we are in the periodic setting, we let $\varphi_{i+N,j}=\varphi_{i,j+N}=\varphi_{i,j}$.

For $\varphi\in\Rr^{N^2}$, we define the 5-point stencil, central-difference scheme
\begin{align}\label{diffschemes}
\begin{split}
(D_{1}^h \varphi)_{i,j}&=\frac{-\varphi_{i+2,j}+8\varphi_{i+1,j}-8\varphi_{i-1,j} -\varphi_{i-2,j}}{12h}\\
(D_{2}^h \varphi)_{i,j}&=\frac{-\varphi_{i,j+2}+8\varphi_{i,j+1}-8\varphi_{i,j-1} -\varphi_{i,j-2}}{12h}.
\end{split}
\end{align}
The discrete gradient vector is defined by
\begin{equation}\label{gradCentralDiff}
[D^h\varphi]_{i,j} = \left((D_1^h \varphi)_{i,j},(D_2^h \varphi)_{i,j}\right) \in \Rr^2.
\end{equation}

%\subsection{The discrete version of (\ref{mmz})} \label{sub:discretevariational}
Let $u,m,V\in \Rr^{N^2}$ be grid functions and $P=(p_1,p_2)\in \Rr^2$ be a point. We
discretize   $\bar f$ in \eqref{barf}  as follows. 
% $[D_h u]_{i,j},m_{i,j}$ f
 We define
a function  $f_h:\Rr^{2N^2}\times\Rr^{N^2}\to \Rr^{N^2}$ by setting, for $i,j\in\{0,\ldots,N-1\}$, 
\[
( f_h([D_h u],m))_{i,j}=
\begin{cases}
\frac{1}{m_{i,j}^{\alpha -1}\gamma(\alpha -1)  }
\left(\left(p_1+(D^h_1 u)_{i,j}\right)^2\right.\\
\left.\qquad\qquad +\left(p_2+(D^h_2 u)_{i,j}\right)^2\right)
^{\gamma /2} & \quad \mbox{if $m_{i,j}\neq 0$}\\
\infty &  \quad \mbox{if $m_{i,j}= 0$ and $(D^hu)_{i,j} \neq -P$}\\
0  & \quad \mbox{if $m_{i,j}= 0$ and $(D^hu)_{i,j}=-P$}\\
\end{cases}
\]

%for $m_{i,j}\neq0$. \textcolor{red}{What was done for \(m_{i,j} =0\)???}
%Using \eqref{diffschemes}, we have
%\[
%\bar f([D_h u]_{i,j},m_{i,j})=\frac{m_{i,j}^{1-\alpha }}{(\alpha -1) \gamma }
%\left(\left(P^1+\frac{u_{i,j+1}-u_{i,j-1}}{2h}\right)^2
%+\left(P^2+\frac{u_{i+1,j}-u_{i-1,j}}{2 h}\right)^2\right)
%^{\gamma /2}
%\]

Next, we construct a discrete version of $\bar J$, $ J_h:\Rr^{N^2}\times \Rr^{N^2}\to\Rr$, in the following manner. For $u,m\in \Rr^{N^2}$, we define
\[
 J_h(u,m)=h^2\sum_{i,j=0}^{N-1}\big( (f_h([D_h u],m))_{i,j}- V_{i,j}m_{i,j}+G(m_{i,j})\big).
\]
Accordingly, in the discrete setting, the problem \eqref{mmz}  becomes
\begin{equation}\label{mmzdiscrete}
\min_{(u,m)\in \mathcal A_h} J_h(u,m),
\end{equation}
where 
%\marginpar{\vskip3mm\textcolor{red}{\tiny should it be \(m_{ij} \geq 0\) or \(m_{ij} > 0\)?}}
\begin{equation}\label{eq:setAh}
\Aa_h=\bigg\{(u,m)\in \Rr^{2N^2}:h^2\sum_{i,j=0}^{N-1} u_{i,j} =0,h^2\sum_{i,j=0}^{N-1} m_{i,j}=1,m_{i,j}\geq 0\,\,  \forall i,j\in\{0,\ldots,N-1\}\bigg\}.
\end{equation}

\begin{remark}
        In alternative to the 5-point stencil, centered-differences, we can discretize $\bar J$ using monotone finite differences
        %for the five-point stencil of a point $x_{i,j}\in\Tt_N^2$ is $\{x_{i-2,j},x_{i-1,j},x_{i,j},x_{i+1,j},x_{i+2,j}\}$. such as
        \begin{align*}%\label{diffschemes}
        (D^h_1 \varphi)^{+}_{i,j}=\frac{\varphi_{i+1,j}-\varphi_{i,j}}{h}, \quad (D^h_2 \varphi)^{+}_{i,j}=\frac{\varphi_{i,j+1}-\varphi_{i,j}}{h}. 
        \end{align*}
        Here, $|P+Du|$ is discretized as follows: 
%       \begin{multline}\label{gradMonon}
%       |D^hh \varphi|_{i,j}=\max(
%       (D^h_1 \varphi)^{+}_{i,j},0)+\max((D^h_1 \varphi)^{+}_{i-1,j},0)+\max((D^h_2 \varphi)^{+}_{i,j})\\+\max((D^h_2 \varphi)^{+}_{i,j-1}).
%       \end{multline}
%       In our case, a monotone scheme for is
        \begin{align*}
        |P+D^h \varphi |_{i,j}=&\max(-p_1-(D^h_1 \varphi)^{+}_{i,j},0)+\max(p_1+(D^h_1 \varphi)^{+}_{i-1,j},0)\\&+\max(-p_2-(D^h_2 \varphi)^{+}_{i,j},0)+\max(p_2+(D^h_2 \varphi)^{+}_{i,j-1},0).
        \end{align*}
        However, in our numerical experiments, the results with the 5-point centered difference discretization and the ones with this monotone discretization were similar. 
        Moreover, our numerical tests were faster using \eqref{gradCentralDiff} than using the monotone discretization. Therefore, 
        in our numerical computations we use the central difference
        scheme in \eqref{diffschemes}.
\end{remark}

\subsection{Numerical experiments in one dimension}\label{sub:num1d}
%\paragraph{\textbf{Validation in one dimension}}
%In this Subsection, we exhibit the sensitivity of the solution of \eqref{mmz} when we change the parameters $\alpha$ and $\gamma$. To be more visually comprehensive, we consider the one-dimensional case. 

To validate our approach, we start by   considering the one-dimensional version of the discretized variational problem  \eqref{mmzdiscrete} with \(P=0\) and \(G(m) = \frac{m^2}{2}\). In this case, the unique solution of \eqref{mmz} is (see Section~\ref{varexplicit})
\begin{equation}\label{explmmzP0}
\begin{aligned}
(\bar u, \bar m)= \big(0, (V(x) - \overline H)^+\big),
\end{aligned}
\end{equation}
where \(\overline H\) is such that \(\int_{\Tt} \bar m(x)\, dx = 1\). 

As a first example, we choose the congestion
exponent $\alpha=1.5$, $\gamma=2$,
and the potential 
\[
V(x)=\frac 1 2 \cos\left(2\pi \left(x-\frac 1 4\right)\right),
\] 
as shown in Fig.~\ref{fig:plotVval}. 
   Note that $\int_{\Tt}V\,dx=0 $ and \(\inf_{\Tt} V = -\frac{1}{2}\).
As proved in Section~\ref{varexplicit}, \((\bar u,\bar m) = (0, V+1)\) is the minimizer of \eqref{mmz}
and \((\bar u,\bar m, \overline H) = (0, V+1, -1)\) is the classical solution of \eqref{main2}.

The numerical solution (with \(N=200)\) for the density, $m$, and the value function, $u$, are shown, respectively, in Figs.~\ref{fig:plotmval} and \ref{fig:plotuval}. As expected, $u\equiv 0$ and, because the potential does not have regions where it is {too negative}, the graph of \(m\) resembles that of \(V\) everywhere. In Fig.~\ref{fig:plotmvVal}, we depict the absolute error between the numerical solution $m(\cdot)$ and the explicit solution $\bar m(\cdot)=V(\cdot)+1$. The maximum absolute error between these two functions is of order $10^{-8}$.

Next, we slightly modify the potential but in such a way that we are in the borderline case for which \eqref{explmmzP0} does not provide a classical solution 
of \eqref{main2}. More precisely, we consider the potential 
\[
V(x)=\cos\left(2\pi \left(x-\frac 1 4\right)\right)
\] 
as shown in Fig.~\ref{fig:plotVvalb}. 
   In this case, $\int_{\Tt}V\,dx=0 $,  \(\inf_{\Tt} V =V(\frac{3}{4})=-1\), and \((\bar u,\bar m) = (0, V+1)\) is
the minimizer of \eqref{mmz}. Note that \(\bar m (\frac{3}{4})=0\); elsewhere in \(\Tt\), \(\bar m\) is positive.
The numerical solution (with \(N=200)\) for the density, $m$, and the value function, $u$,
are shown, respectively, in Figs.~\ref{fig:plotmvalb} and \ref{fig:plotuvalb}.
As expected, $u\equiv 0$ and, as before, the graph of \(m\) resembles that of \(V\)
everywhere. In Fig.~\ref{fig:plotmvValb}, we depict the absolute
error between the numerical solution $m(\cdot)$ and the explicit solution $\bar
m(\cdot)=V(\cdot)+1$. The maximum absolute error
between these two functions is of order $10^{-6}$.

Finally, concerning the validation of our method in the one-dimensional case, we consider a potential for which \eqref{explmmzP0} is far from being a classical solution of \eqref{main2}. Namely, we consider the potential, \[
V(x)=10\cos\left(2\pi \left(x-\frac 1 4\right)\right),
\] 
shown in Fig.~\ref{fig:plotVvalc}.    
The numerical solution for the density, $m$, and the value function, $u$,
are shown, respectively, in Figs.~\ref{fig:plotmvalc} and \ref{fig:plotuvalc}.
As expected, $u\equiv 0$ and, in the regions where the potential  is not too negative, the graph of \(m\) resembles that of \(V\). 

In Fig.~\ref{fig:plotmvValc}, we depict the absolute
error between the numerical solution $m(\cdot)$ and the explicit solution $\bar
m(\cdot)$ given by \eqref{explmmzP0}. 
To understand the error between \(m\) and \(\bar m\), we performed the numerical simulation for several grid sizes, \(h=\tfrac1N\). In Table~\ref{Table1d}, we present the numerical error and the corresponding running time for \(N\in\{100, 200, 400, 600, 800, 1000\}\). We observe  that the error is linear in grid size and the running time quadratic in the number of nodes,  as expected for a minimization
algorithm where inversions of matrices are used.  
The simulations were performed in a laptop with a 2.5 GHz Intel Core i7 processor, and 16 GB 1600 MHz DDR3 memory.

Next, we show the effect of the  preferred direction, $P$, by plotting the behavior of $u$ and $m$ for $P\in\{0,1,2,3,4\} $ in 
Fig.~\ref{fig:solValidationExp2}. We recall that,  if \(P\not=0\), we are not aware of closed-form solutions of \eqref{mmz}. For the example in  Fig.~\ref{fig:solValidationExp2},  we chose   $V(x)= e^{-(x-\frac{1}{2})^2}$, $G(m) = m^2$,
$\alpha= 1.5$, and $\gamma=2$. We observe that the bigger the \(P\), the flatter the graph of \(m\). In other words, as \(P\) grows, agents prefer to move more and their distribution becomes more homogeneous. This effect was first observed for MFGs without congestion in \cite{Gomes2016b}.

Finally, in Fig.~\ref{fig:solExpr134}, we illustrate the dependence of the solution on the congestion parameter, \(\alpha\). Precisely, we depict the behavior of the solution for the potential \[V(x)=10\sin\left(2\pi\left(x+\frac 1 4\right)\right)\] 
and for  $\alpha\in\{1.001, 1.2, 1.4, 2\},$  $P=1$,  $\gamma=2$, and  \(G(m) = m^3\).  
%The effect of the growth of the Hamiltonian is illustrated in
%Figs.~\ref{fig:plotmls4} and \ref{fig:plotuls4} where $\alpha=1.2 $ and $\gamma\in\{1.4,1.8,2,3\}$. 
As expected, the density  resembles the potential in the regions where the potential is not \textit{too negative}. This explains the formation of regions with almost no agents (see Fig.~\ref{fig:plotmls4}).

Note that the congestion exponent, $\alpha$, determines the strength of the congestion effects. For instance, in the regions where the potential is too negative and, therefore, with fewer agents  (see Fig.~\ref{fig:plotmls4}), we see that the higher  the congestion exponent is, the lower  the density value is. However, these differences are compensated in regions where the potential is positive, where we see that the higher  the congestion exponent is, the higher  the density value is. This effect is expected because the mass of the system is preserved.

\begin{figure}[htb!]
        \centering
        \begin{subfigure}[b]{\sizefigure\textwidth}
                \includegraphics[width=\textwidth]{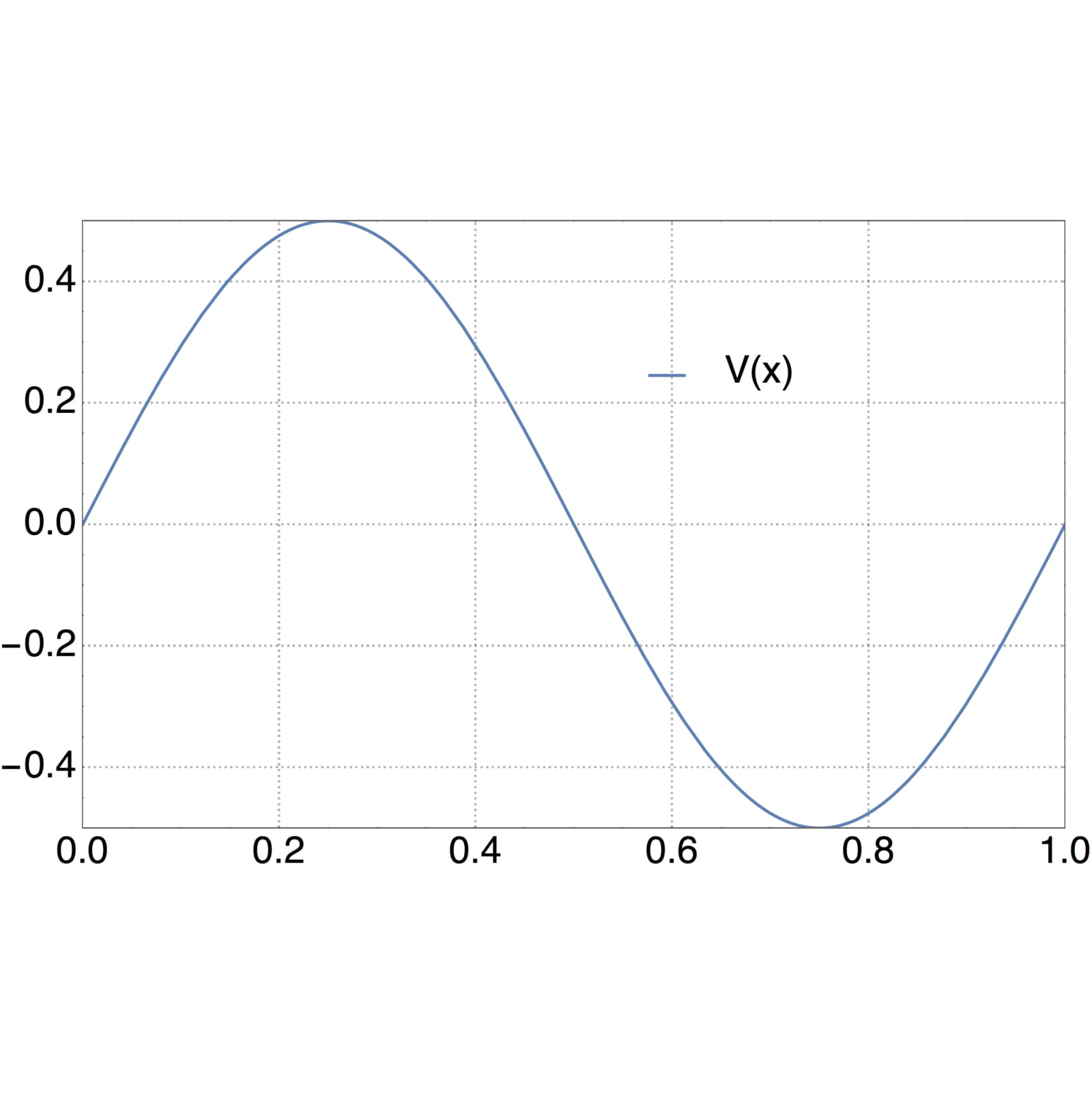}
                \caption{Potential $V$}
                \label{fig:plotVval}
        \end{subfigure}
        \begin{subfigure}[b]{\sizefigure\textwidth}
                \includegraphics[width=\textwidth]{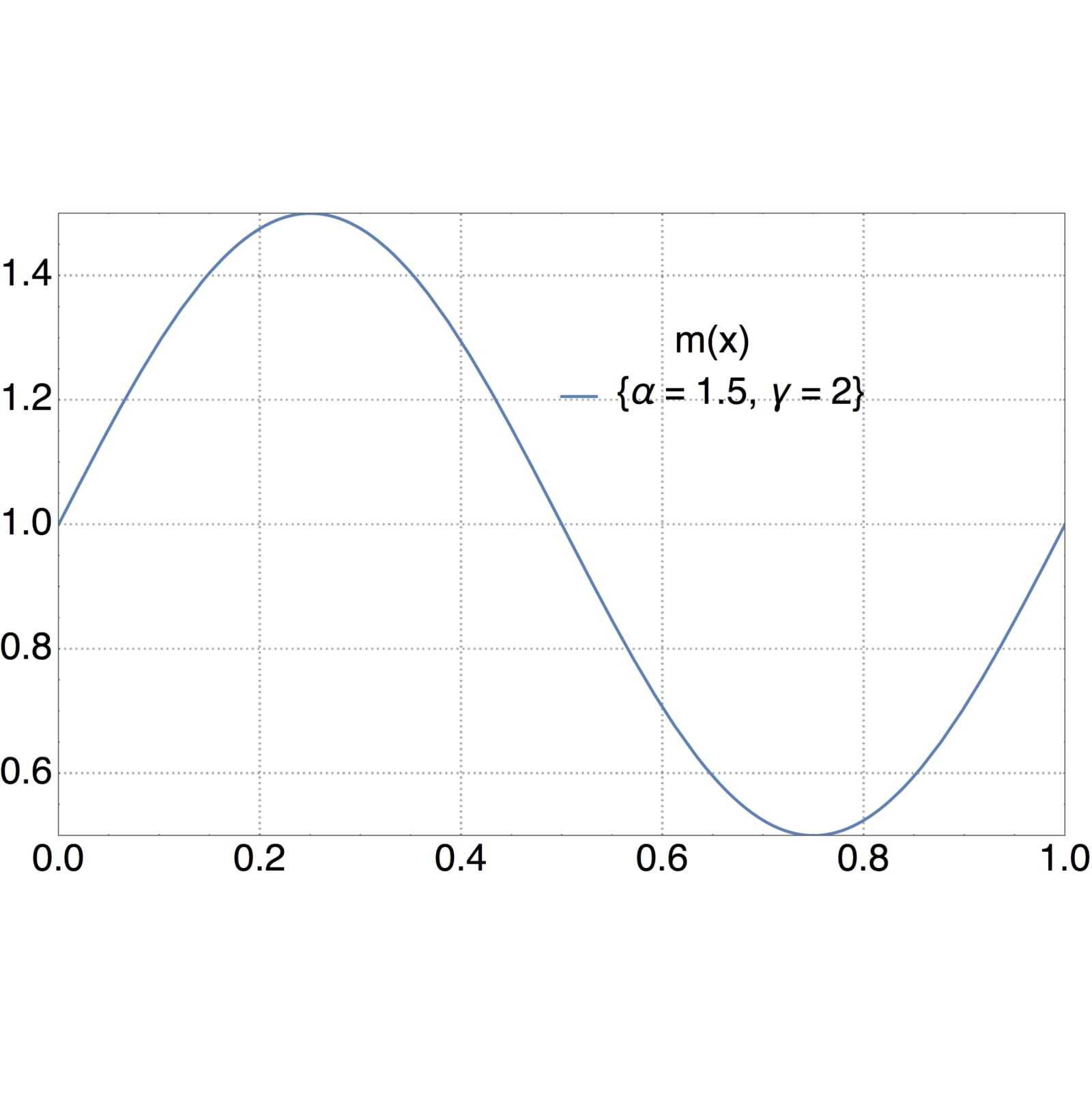}
                \caption{Density $m$}
                \label{fig:plotmval}
        \end{subfigure}\\
        \begin{subfigure}[b]{\sizefigure\textwidth}
                \includegraphics[width=\textwidth]{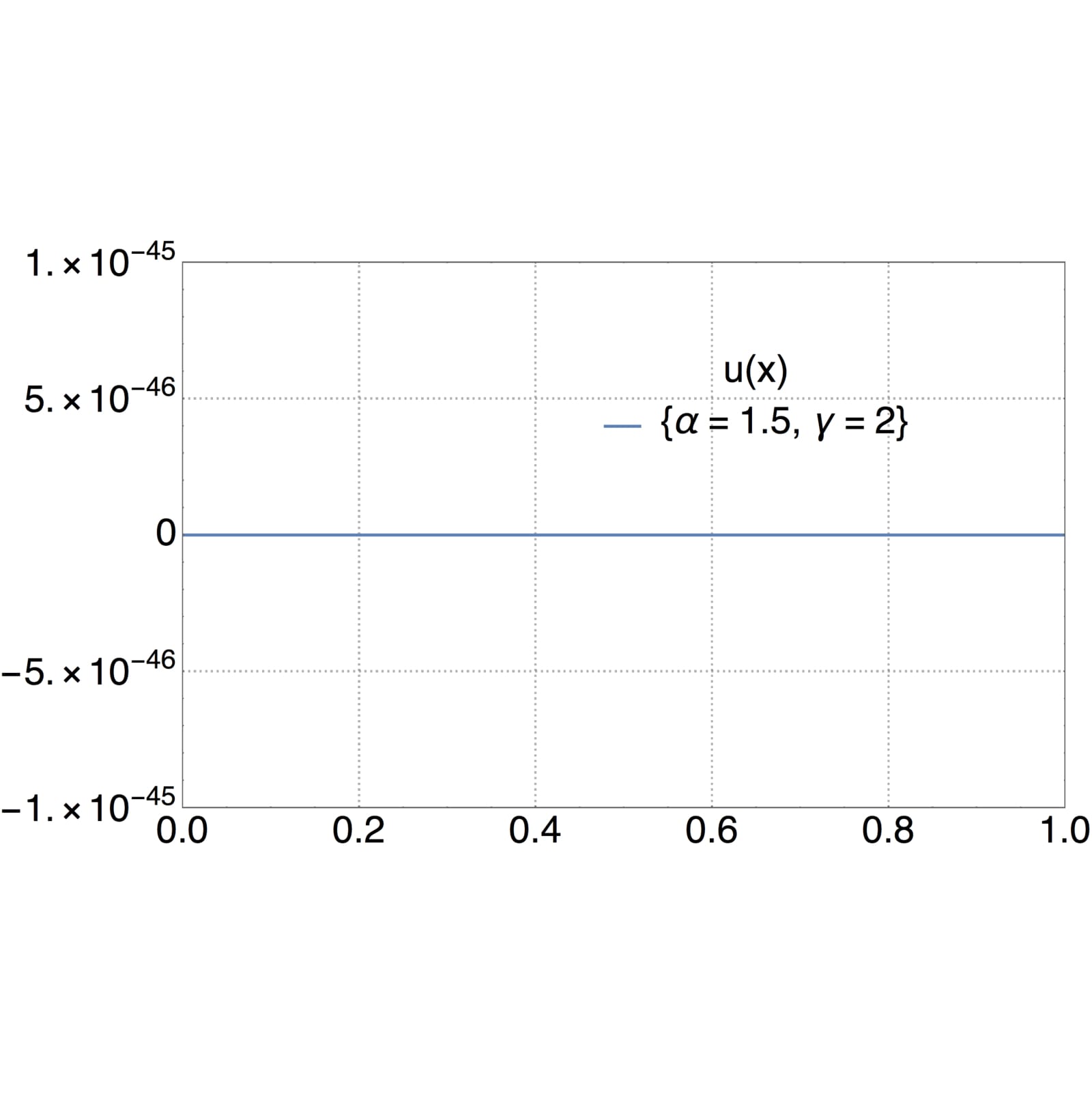}
                \caption{Value function $u$}
                \label{fig:plotuval}
        \end{subfigure}
        \begin{subfigure}[b]{\sizefigure\textwidth}
                \includegraphics[width=\textwidth]{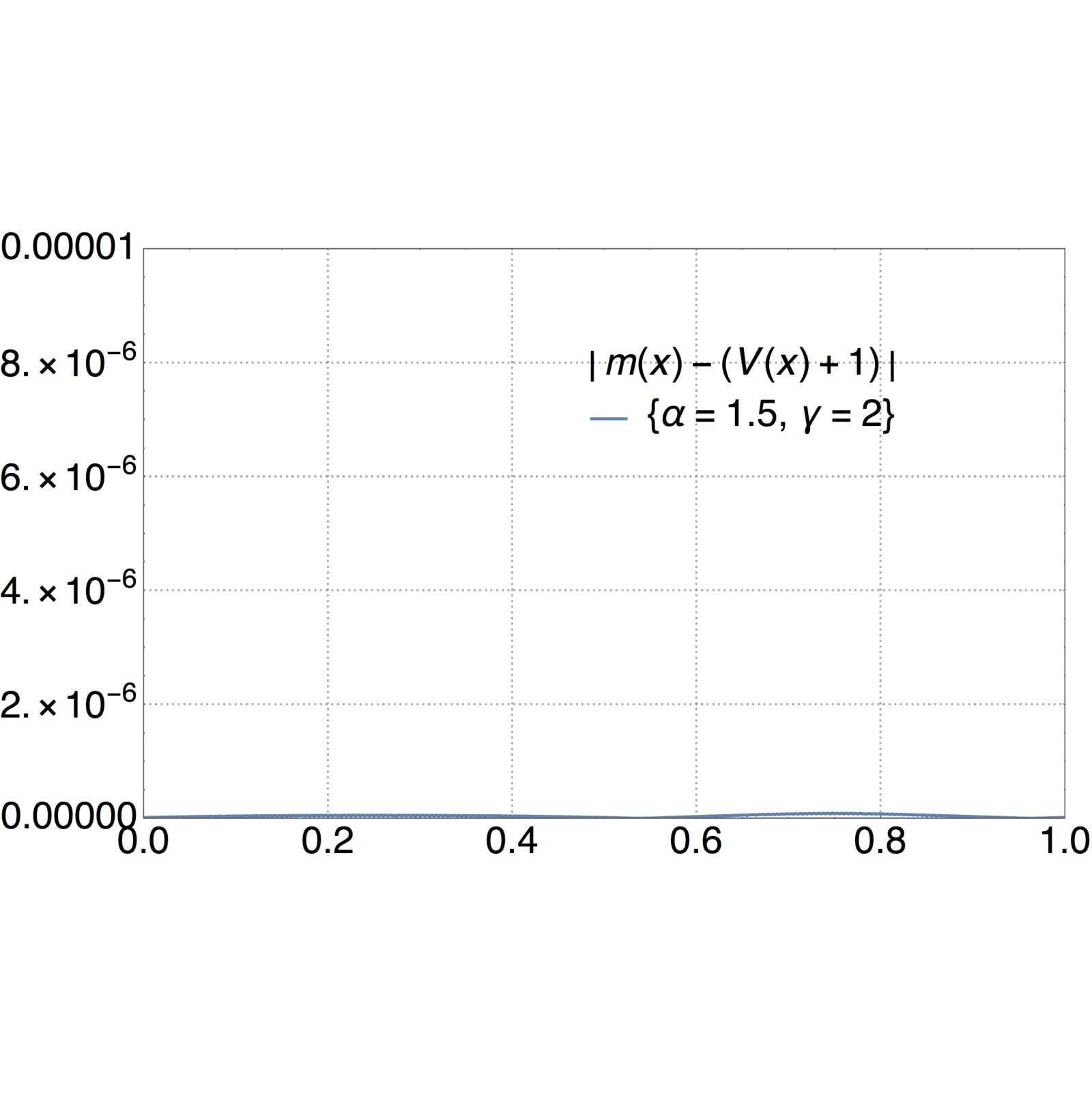}
                \caption{Absolute numerical error}
                \label{fig:plotmvVal}
        \end{subfigure}
        ~ %add desired spacing between images, e. g. ~, \quad, \qquad, \hfill etc. 
        \caption{Numerical solution of the variational problem \eqref{mmz} with \(N=200\) and for \(d=1\),  $P=0$, $V(x)=\frac{1}{2} \cos\left(2\pi \left(x-\frac 1 4\right)\right)$, $G(m) = \frac{m^2}{2}$, $\alpha= 1.5$, and $\gamma=2$.}\label{fig:solValidation}
\end{figure}

\begin{figure}[htb!]
        \centering
        \begin{subfigure}[b]{\sizefigure\textwidth}
                \includegraphics[width=\textwidth]{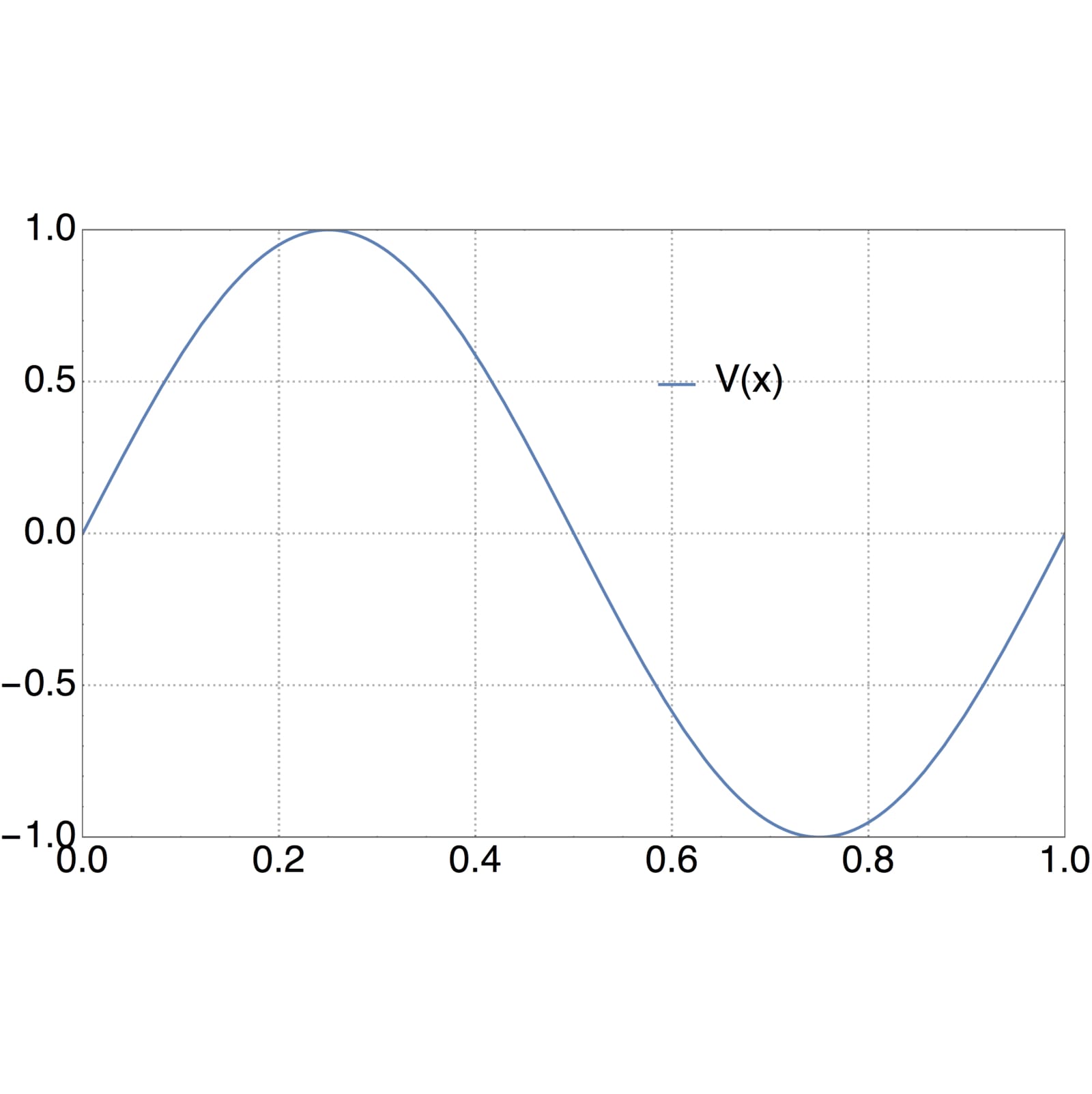}
                \caption{Potential $V$}
                \label{fig:plotVvalb}
        \end{subfigure}
        \begin{subfigure}[b]{\sizefigure\textwidth}
                \includegraphics[width=\textwidth]{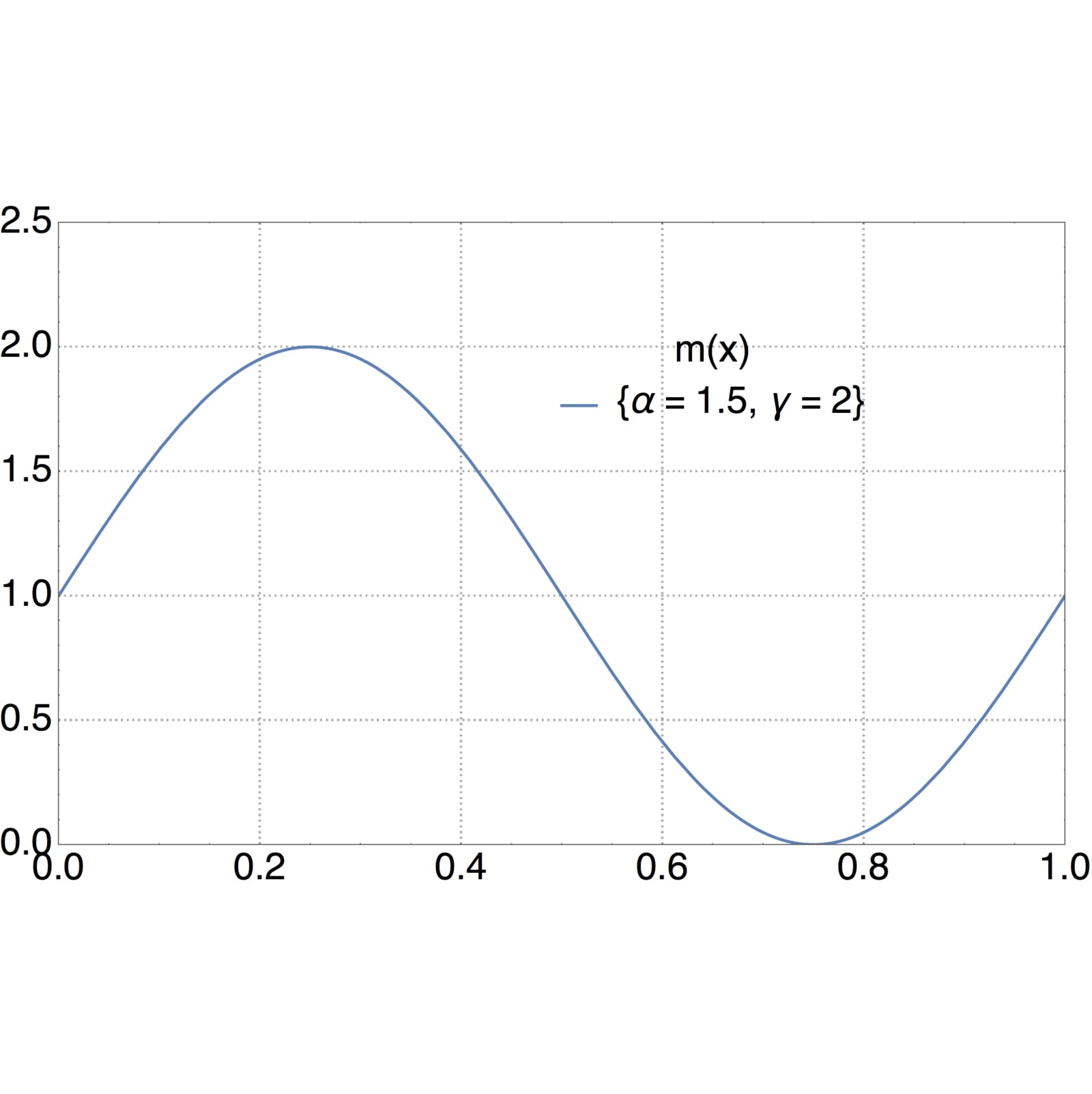}
                \caption{Density $m$}
                \label{fig:plotmvalb}
        \end{subfigure}\\
        \begin{subfigure}[b]{\sizefigure\textwidth}
                \includegraphics[width=\textwidth]{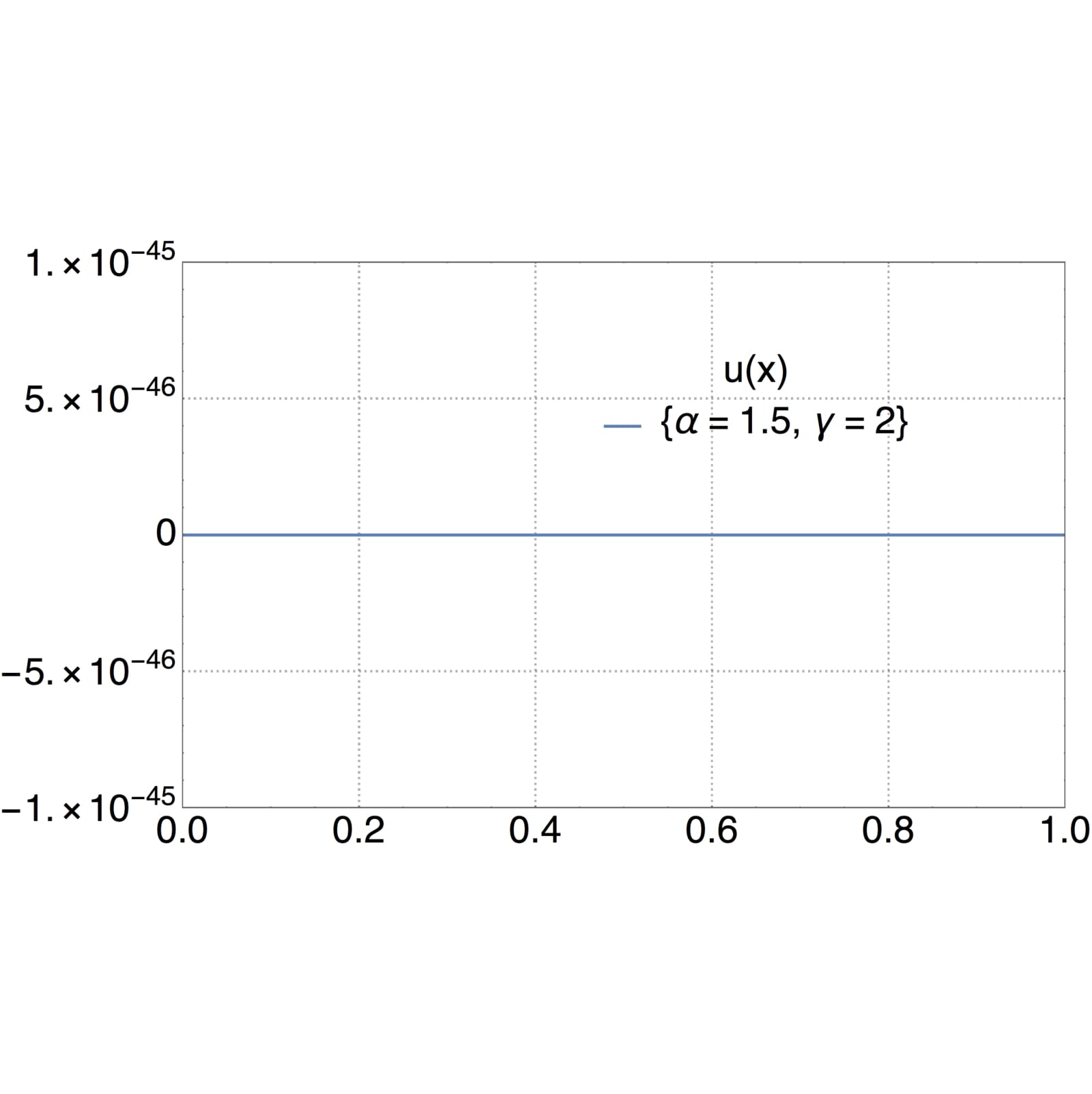}
                \caption{Value function $u$}
                \label{fig:plotuvalb}
        \end{subfigure}
        \begin{subfigure}[b]{\sizefigure\textwidth}
                \includegraphics[width=\textwidth]{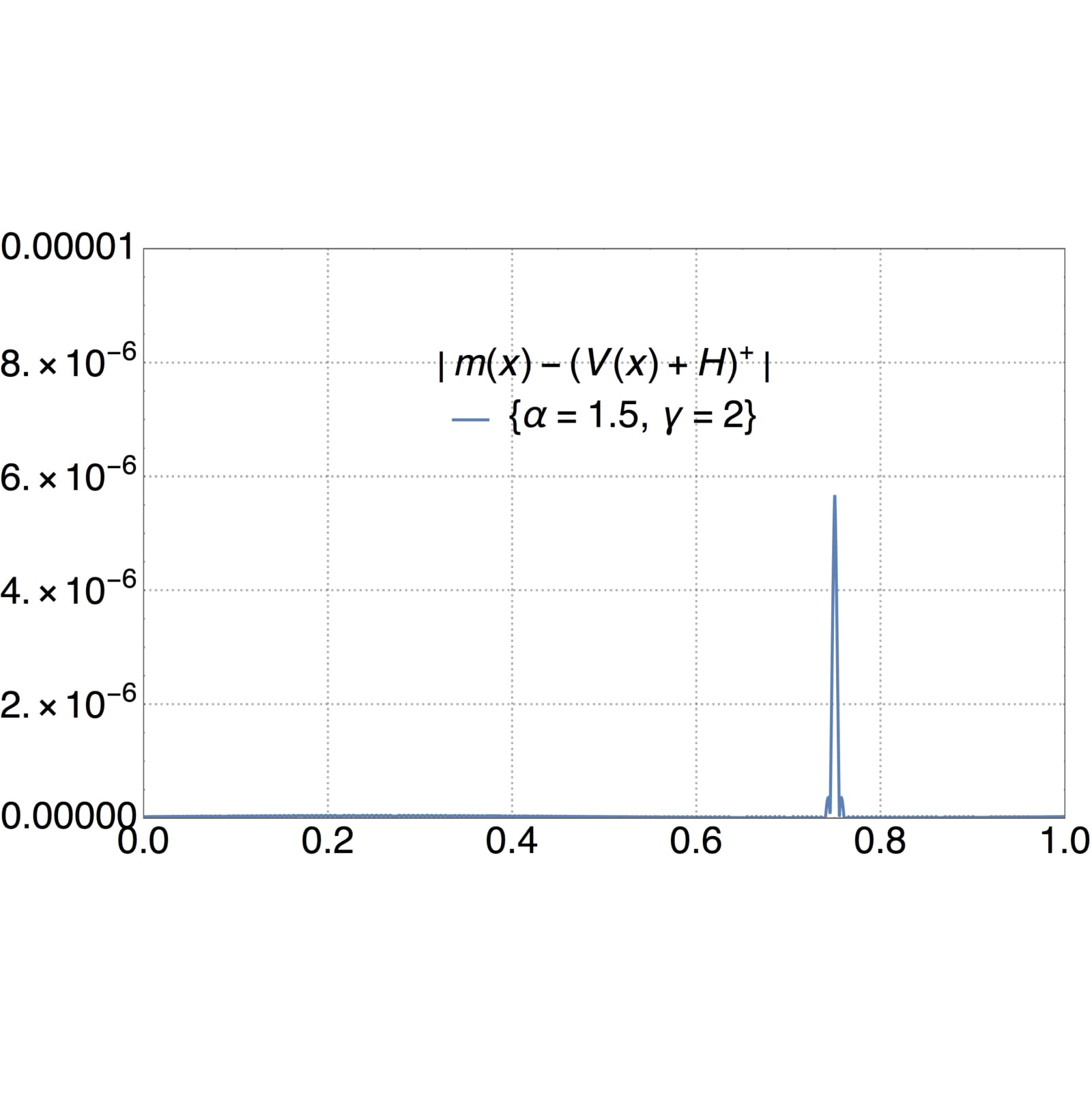}
                \caption{Absolute numerical error}
                \label{fig:plotmvValb}
        \end{subfigure}
        ~ %add desired spacing between images, e. g. ~, \quad, \qquad, \hfill etc. 
        \caption{Numerical solution of the variational problem \eqref{mmz} with \(N=200\) and for \(d=1\), $P=0$, $V(x)= \cos\left(2\pi \left(x-\frac 1 4\right)\right)$, $G(m) = \frac{m^2}{2}$, $\alpha= 1.5$, and $\gamma=2$.}\label{fig:solValidationb}
\end{figure}

\begin{figure}[htb!]
        \centering
        \begin{subfigure}[b]{\sizefigure\textwidth}
                \includegraphics[width=\textwidth]{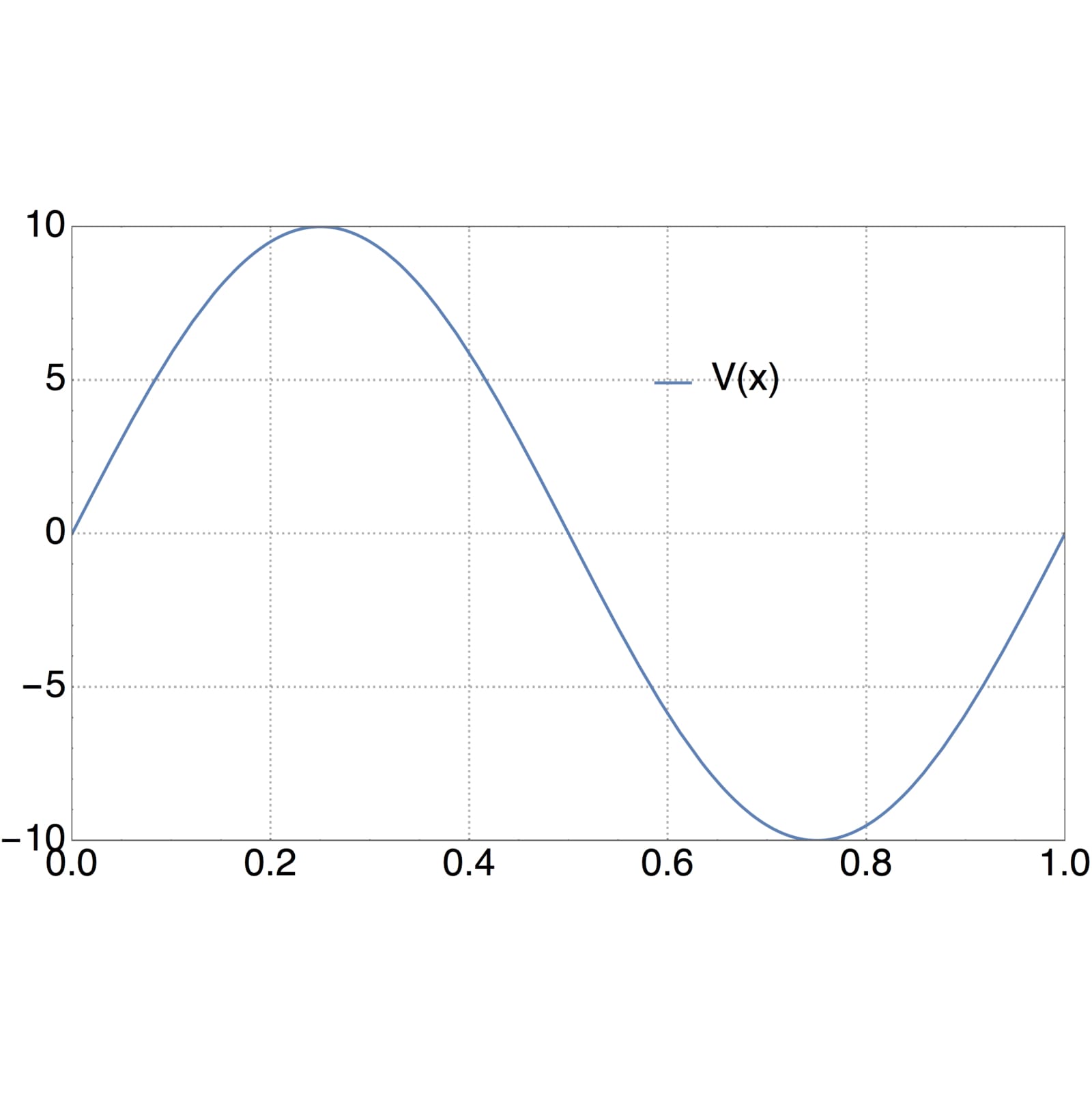}
                \caption{Potential $V$}
                \label{fig:plotVvalc}
        \end{subfigure}
        \begin{subfigure}[b]{\sizefigure\textwidth}
                \includegraphics[width=\textwidth]{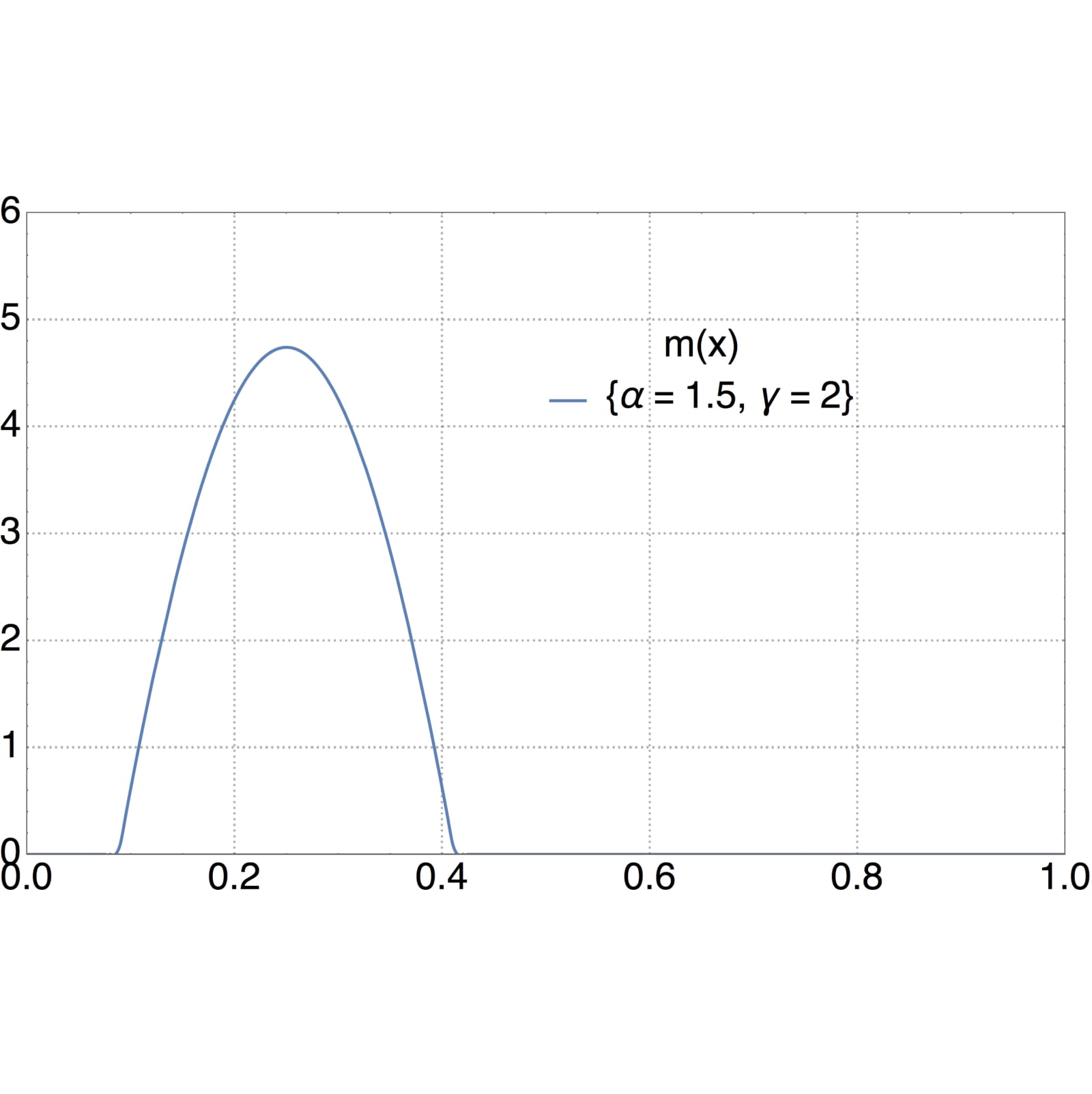}
                \caption{Density $m$}
                \label{fig:plotmvalc}
        \end{subfigure}\\
        \begin{subfigure}[b]{\sizefigure\textwidth}
                \includegraphics[width=\textwidth]{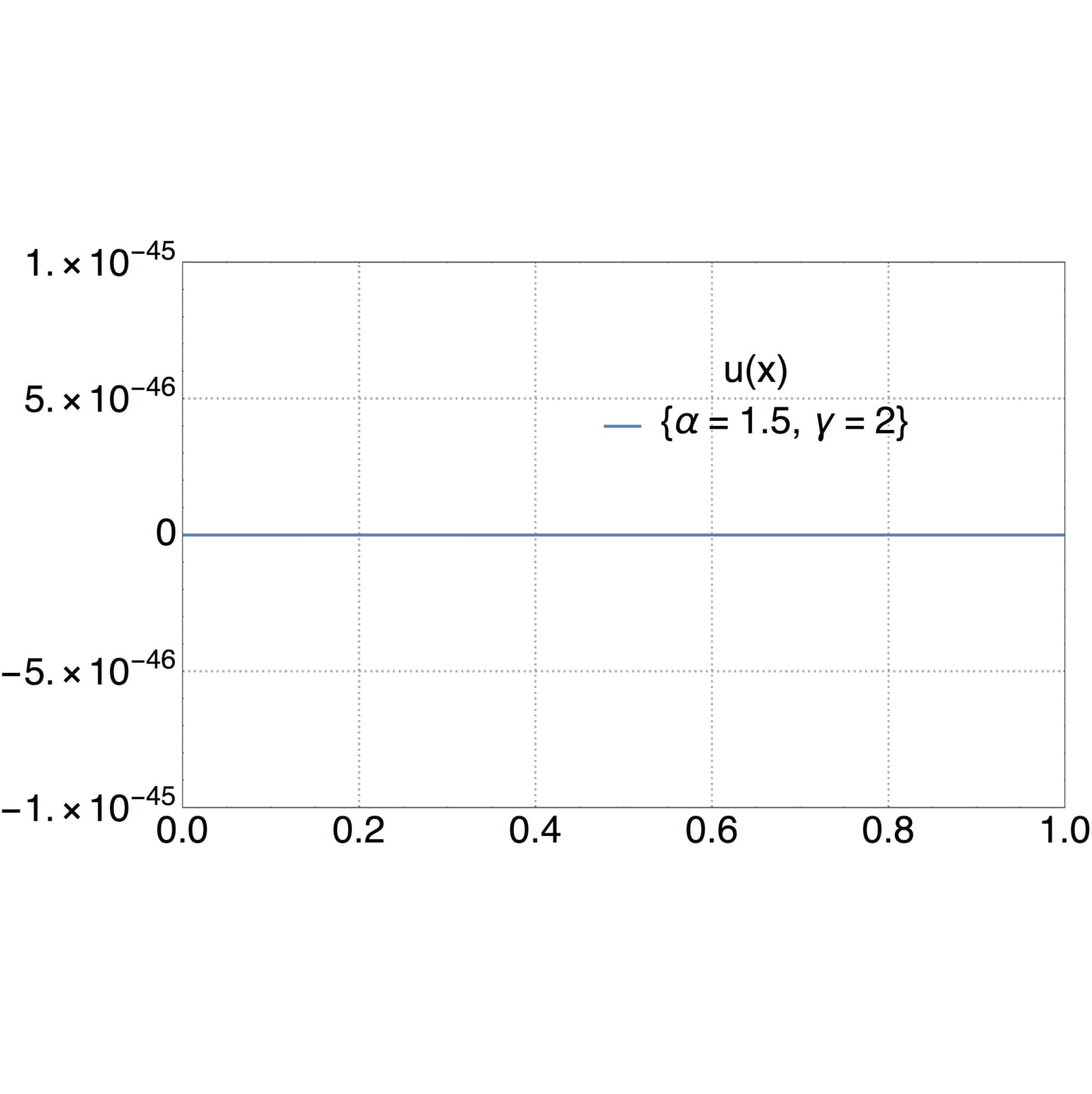}
                \caption{Value function $u$}
                \label{fig:plotuvalc}
        \end{subfigure}
        \begin{subfigure}[b]{\sizefigure\textwidth}
                \includegraphics[width=\textwidth]{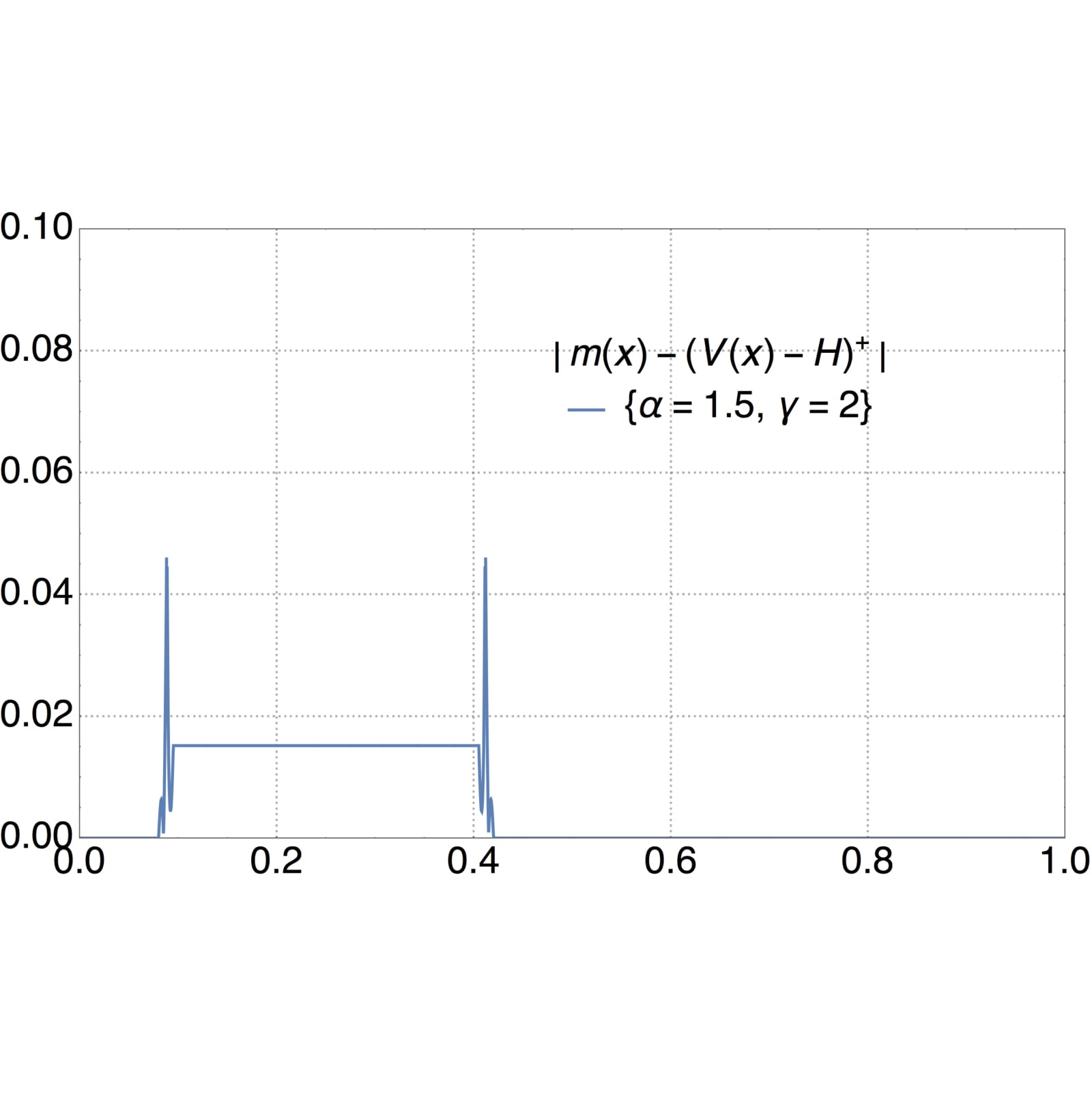}
                \caption{Absolute numerical error}
                \label{fig:plotmvValc}
        \end{subfigure}
        ~ %add desired spacing between images, e. g. ~, \quad, \qquad, \hfill etc. 
        \caption{Numerical solution of the variational problem \eqref{mmz} with \(N=200\) and for \(d=1\), $P=0$, $V(x)=10 \cos\left(2\pi \left(x-\frac 1 4\right)\right)$, $G(m) = \frac{m^2}{2}$, $\alpha= 1.5$, and $\gamma=2$.}\label{fig:solValidationc}
\end{figure}

\begin{table}
\begin{center}
        \begin{tabular}{ | l | | c | c | c| }
                \hline
                \textbf{Grid size} & \textbf{Max abs. error} & \textbf{Mean abs. error} & \textbf{Running time (s)} \\      
                \hline
                \hline
               $N=100$ & 0.03083580&0.010075100 & 4.12853\\ \hline
               $N=200$ & 0.01517990 & 0.004908920 & 6.41835\\ \hline
               $N=400$ &   0.00768403 & 0.002471920 & 24.1912 \\ \hline
               $N=600$ &   0.00522876 & 0.001681400  &72.1434 \\ \hline
               $N=800$ &   0.00385371 & 0.001246080 &105.094 \\ \hline
               $N=1000$ &  0.00308331 & 0.000994913  &182.188 \\ \hline
        \end{tabular}
\end{center}
\caption{Numerical error and running time computation. Comparison between the closed-form solution, $\bar m(x)=(V(x)-\Hh)$, where $\Hh\in\Rr$ is such that $\int_{\Tt}\bar m\, dx=1$, and the numerical solution of the variational problem \eqref{mmz}. Here, $G(m) = \frac{m^2}{2}$, $P=0$, $V(x)=10 \cos\left(2\pi \left(x-\frac 1 4\right)\right)$, $\alpha= 1.5$, and $\gamma=2$.}\label{Table1d}
\end{table}

\begin{figure}[htb!]
        \centering
   \begin{subfigure}[b]{\sizefigure\textwidth}
        \includegraphics[width=\textwidth]{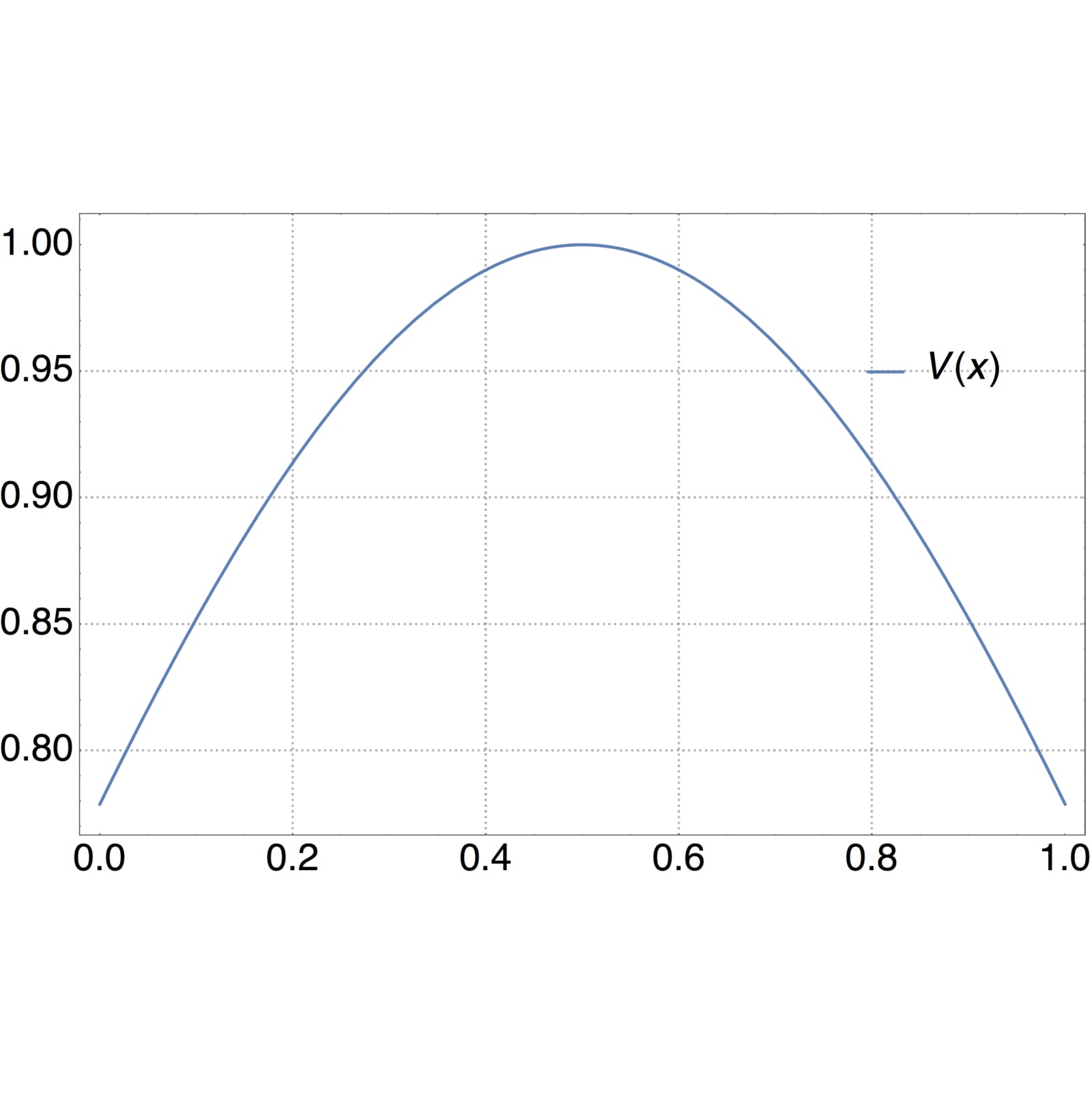}
        \caption{Potential $V$}
        \label{fig:plotVvalExp2}
  \end{subfigure}\\
        \begin{subfigure}[b]{\sizefigure\textwidth}
                \includegraphics[width=\textwidth]{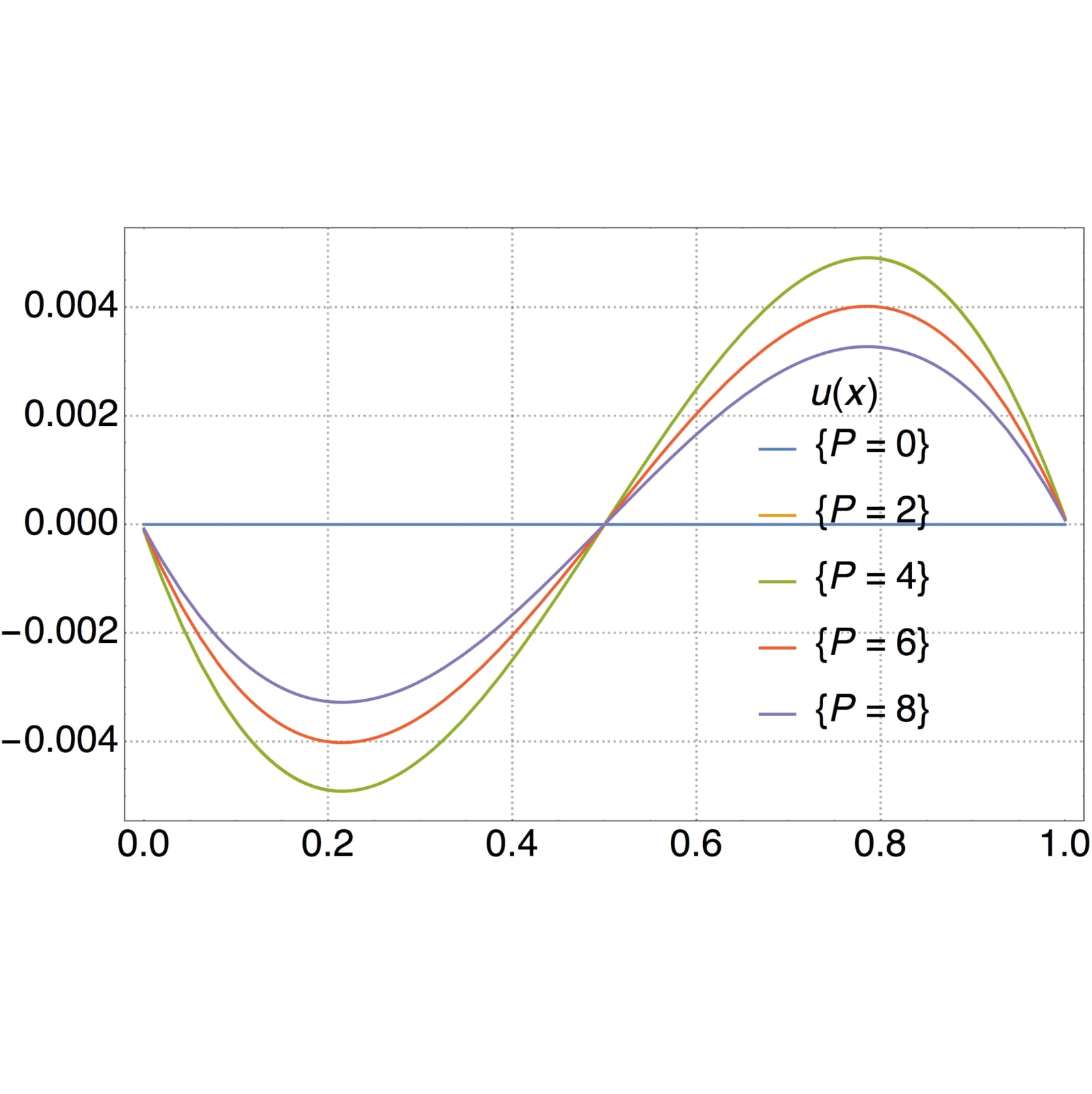}
                \caption{Value function $u$}
                \label{fig:plotuvalExp2}
        \end{subfigure}
        \begin{subfigure}[b]{\sizefigure\textwidth}
                \includegraphics[width=\textwidth]{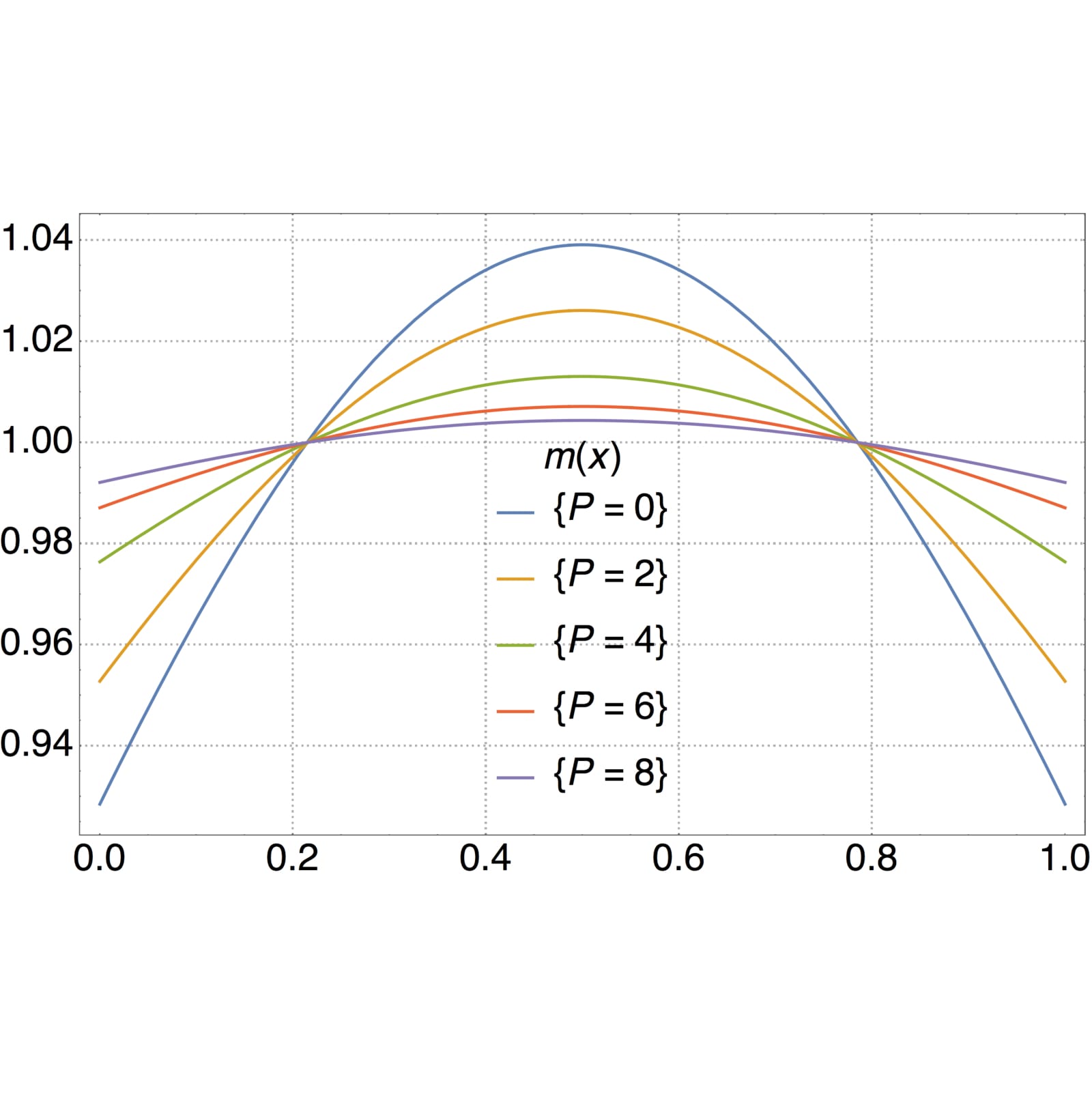}
                \caption{Density $m$}
                \label{fig:plotmvValExp2}
        \end{subfigure}
        
        ~ %add desired spacing between images, e. g. ~, \quad, \qquad, \hfill etc. 
        \caption{Numerical solution of the variational problem \eqref{mmz} with \(N=200\) and  for  \(d=1\), $P\in\{0,2,4,6,8\}$, 
$V(x)= e^{-(x-\frac{1}{2})^2}$, $G(m) = m^2$, $\alpha= 1.5$, and $\gamma=2$.}\label{fig:solValidationExp2}
\end{figure}

%\begin{IMG}
%       Clear[Plotmls,Plotuls,PlotVls]
%       Get[CloudObject["Experiment3"]];
%\end{IMG}
%
%\begin{IMG}
%       {"plotm3", 
%               GraphicsRow[{Plotm}, ImageSize -> Large]
%       }
%\end{IMG}
%
%
%\begin{IMG}
%       {"plotu3", 
%               GraphicsRow[{Plotu}, ImageSize -> Large]
%       }
%\end{IMG}
%
%\begin{IMG}
%       {"plotu3b", 
%               Plotu
%       }
%\end{IMG}
%
%\begin{IMG}
%       {"plotV3", 
%               GraphicsRow[{PlotV}, ImageSize -> Large]
%       }
%\end{IMG}
%
%\begin{IMG}
%       Get[CloudObject["Experiment4"]];
%\end{IMG}
%
%\begin{IMG}
%       {"plotm4", 
%               GraphicsRow[{Plotm4}, ImageSize -> Large]
%       }
%\end{IMG}
%
%
%\begin{IMG}
%       {"plotu4", 
%               GraphicsRow[{Plotu4}, ImageSize -> Large]
%       }
%\end{IMG}
%\begin{IMG}
%       {"plotV4", 
%               GraphicsRow[{PlotV4}, ImageSize -> Large]
%       }
%\end{IMG}

\begin{figure}[h!]
        \centering
        \begin{subfigure}[b]{\sizefigure\textwidth}
                \includegraphics[width=\textwidth]{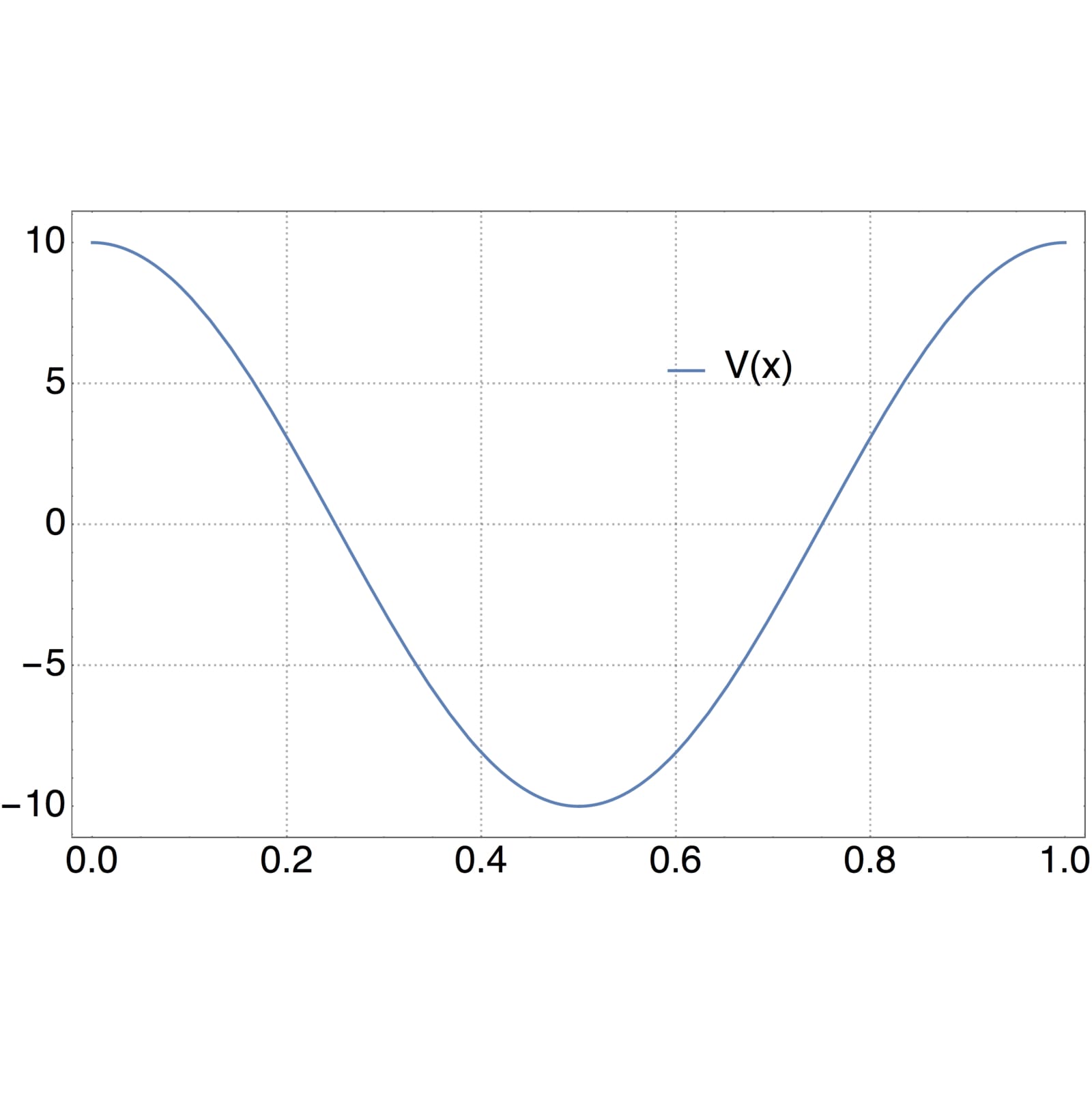}
                \caption{Potential $V$}
                \label{fig:plotV3}
        \end{subfigure}\\
        \begin{subfigure}[b]{\sizefigure\textwidth}
                \includegraphics[width=\textwidth]{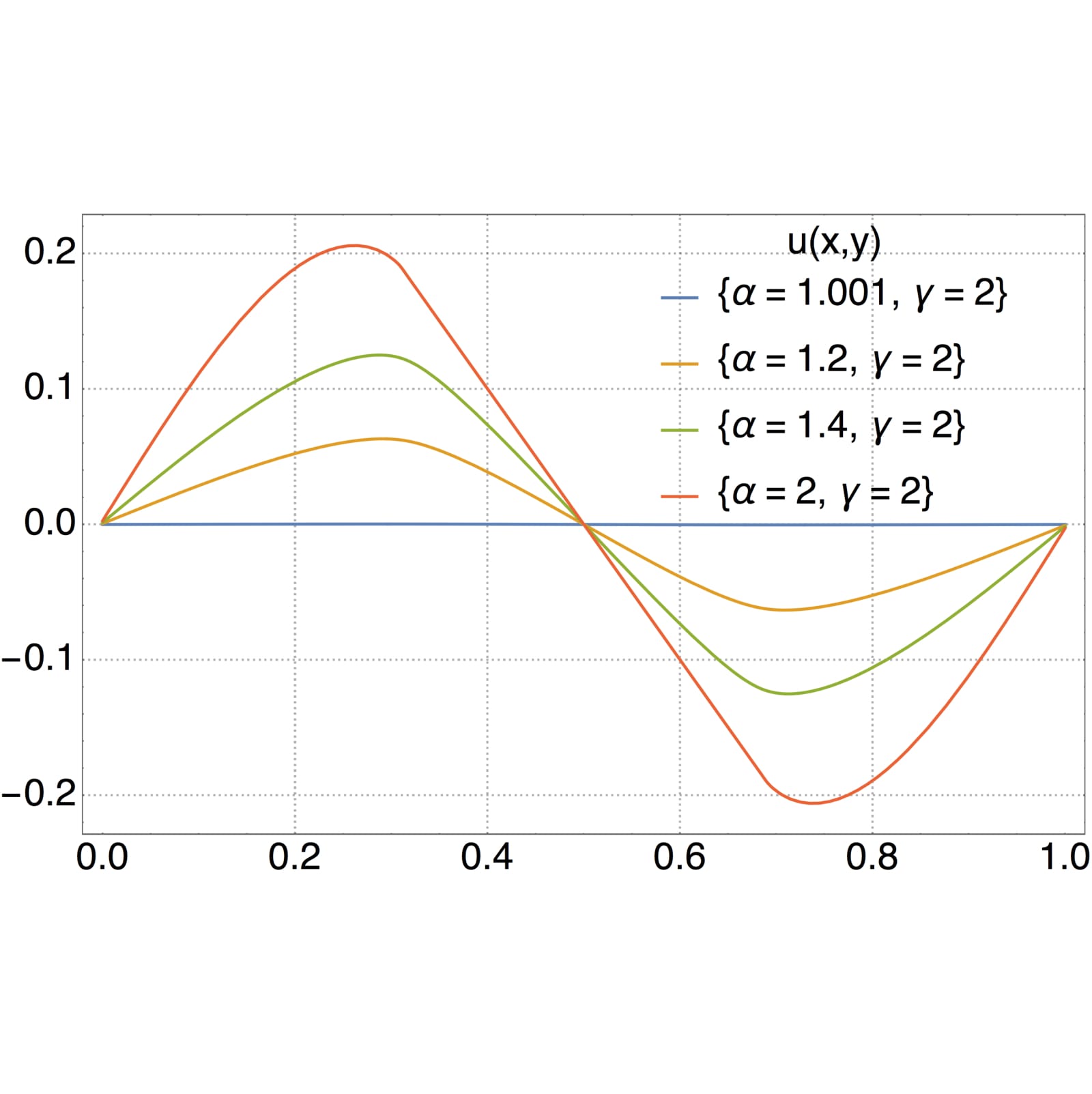}
                \caption{Value function $u$}
                \label{fig:plotuls4}
        \end{subfigure}
                \begin{subfigure}[b]{\sizefigure\textwidth}
                        \includegraphics[width=\textwidth]{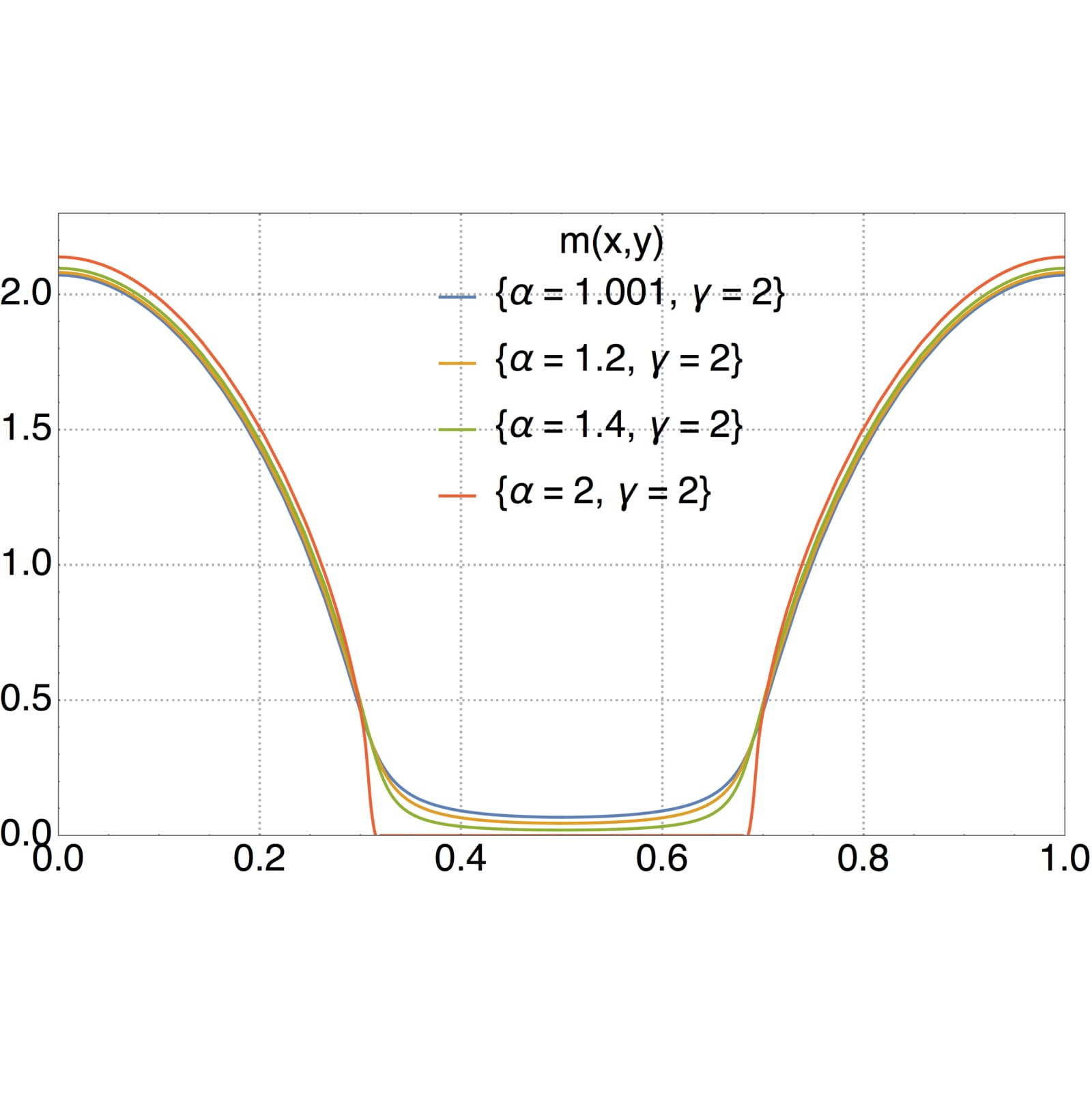}
                        \caption{Density $m$}
                        \label{fig:plotmls4}
                \end{subfigure}
        %       \begin{subfigure}[b]{\sizefigure\textwidth}
        %               \includegraphics[width=\textwidth]{}
        %               \caption{Density $m$}
        %               \label{fig:plotmls3}
        %       \end{subfigure}
        %       \begin{subfigure}[b]{\sizefigure\textwidth}
        %               \includegraphics[width=\textwidth]{plotu3.jpg}
        %               \caption{Value function $u$}
        %               \label{fig:plotuls3}
        %       \end{subfigure}
        ~ %add desired spacing between images, e. g. ~, \quad, \qquad, \hfill etc. 
        \caption{Numerical solution of the variational problem \eqref{mmz} with \(N=200\) and for \(d=1\), $P=1$, $V(x)=10 \sin\left(2\pi\left(x+\frac 1 4 \right)\right)$, $G(m) = m^3$, $\alpha\in \{1.001, 1.2, 1.4, 2\}$, and $\gamma=2$.}
        %               Figs.~\ref{fig:plotmls4} and \ref{fig:plotuls4}.}
        \label{fig:solExpr134}
        %               Figs.~\ref{fig:plotmls4} and \ref{fig:plotuls4} corresponds to $\alpha=1.2 $ and $\gamma\in\{1.4,1.8,2,3\}$.
        
\end{figure}

%\clearpage
\newpage
\subsection{Numerical experiments in two dimensions}\label{sub:num2d}
%\paragraph{\textbf{Validation in two dimensions} } 

Here, we solve the discretized variational problem~\eqref{mmzdiscrete}. We start by validating our numerical method by considering first the \(P=(0,0)\) case. We recall that in this case, we have determined closed-form solutions of \eqref{mmz} in Section~\ref{varexplicit}. Precisely, we treat first the example in which  we choose the potential
\begin{equation}\label{eq:potential2DValid}
V(x,y)= 10\sin\left(2\pi \left(x+\frac 1 4\right)\right)\cos\left(2\pi \left(y+\frac 1 4\right)\right),
\end{equation}
the coupling \(G(m)=\frac{m^2}{2}\), \(P=(0,0)\), $\alpha=
1.5$, and $\gamma=2$ (see Fig.~\ref{fig:solExpr2c}). The numerical solution (with
$N=50$) for the density, $m$, and the value function, $u$, are shown, respectively, in Figs.~\ref{fig:plotmlsvc} and \ref{fig:plotulsvc}. As expected, $u\equiv 0$ and, in the regions where the potential  is not
\textit{too negative}, the graph of \(m\) resembles that of \(V\).  In 
Fig.~\ref{fig:plotmVvc}, we depict the absolute error between the numerical solution, $m$, and the explicit solution of \eqref{mmz}, $\bar m$, given by \eqref{eq:explweaksolH}. Moreover,  we performed the numerical
simulation for several grid sizes, \(h=\tfrac1N\). In Table~\ref{Table2d},
we present the numerical error between \(m\) and \(\bar m\) and the corresponding running time for \(N\in\{20, 40,  80\}\). We observe that the error decreases in a sublinear manner in $\frac 1 N$ and
the run time is increasing somewhat faster than quadratic in $N^2$, the total number of nodes.  

Next, in Fig.~\ref{fig:solExpr1}, we choose  $P=(p_1,p_2)=(1,3)$,  $V(x,y)= \sin\left(2\pi
\left(x+\frac 1 4\right)\right)\cos\left(2\pi \left(y+\frac 1 4\right)\right)$, $G(m)=m^3$,
$\alpha=1.5$, and $\gamma=2$. The density, $m$, and the value function, $u$, for this example are displayed in Figs.~\ref{fig:plotmls} and \ref{fig:plotuls}, respectively. In this figure, we note the asymmetry caused by the vector $P$, which introduces a preferred direction of motion.

Finally, in Fig.~\ref{fig:solExpr12}, we illustrate the solution  of \eqref{mmzdiscrete} for  $P=(-1,3)$, $G(m) = \frac{m^2}{2}+ m^3$, $V(x,y)= e^{-\sin\left(2\pi \left(x+\frac 1 4\right)\right)^2}\cos\left(2\pi \left(y-\frac 1 4\right)\right)$, $\alpha= 2$, and $\gamma=2.5.$ 
%\textcolor{red}{ Comment??}
Note that the numerical solution is robust with respect to the choice of the parameters, and the functions $V$ and $G$. Moreover, the density resembles the potential in all the considered experiments.

\begin{figure}[htb!]
        \centering
        \begin{subfigure}[b]{\sizefigure\textwidth}
                \includegraphics[width=\textwidth]{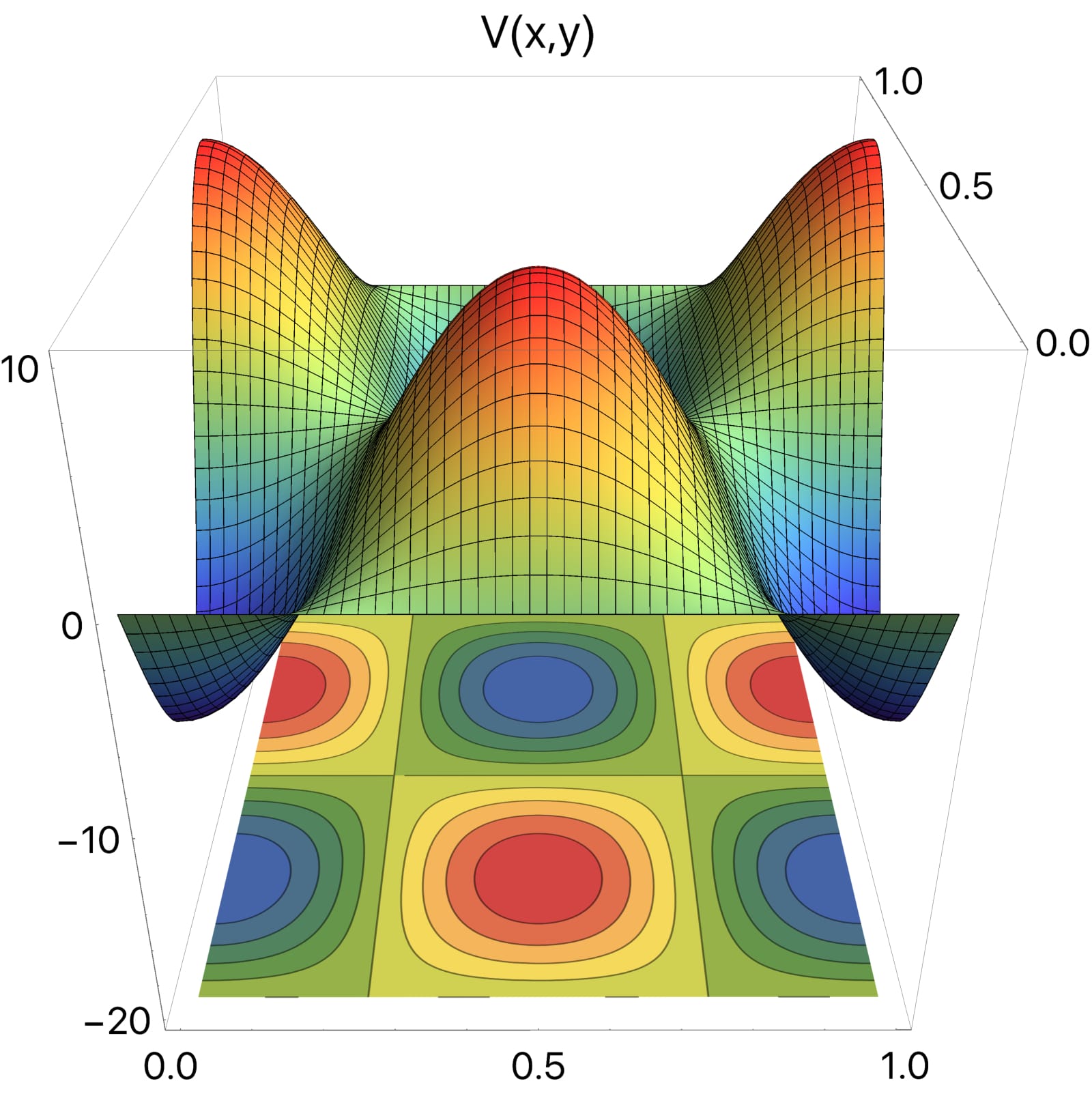}
                \caption{Potential $V$}
                \label{fig:plotVvc}
        \end{subfigure}
        ~\bigskip\bigskip
        \begin{subfigure}[b]{\sizefigure\textwidth}
                \includegraphics[width=\textwidth]{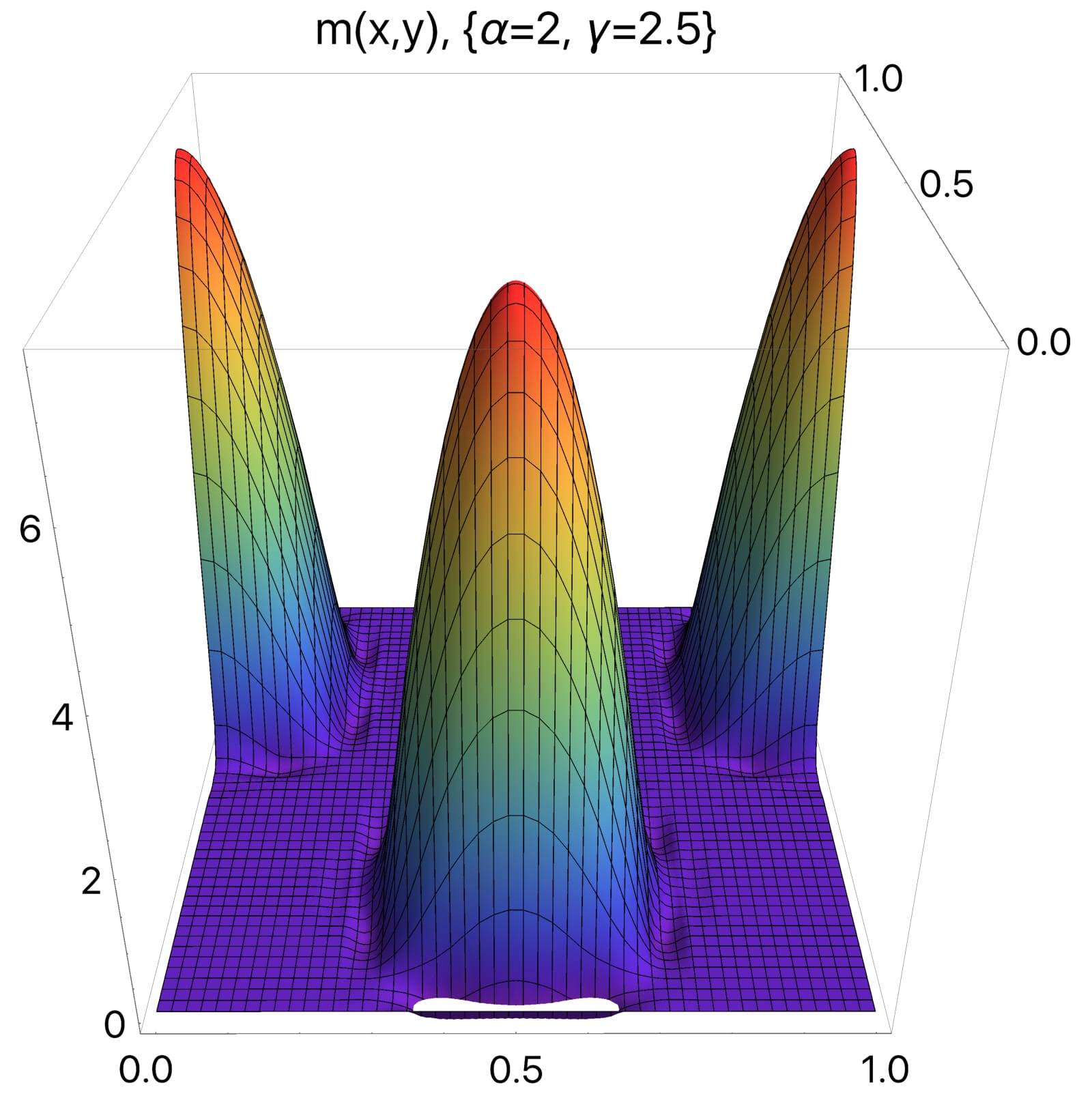}
                \caption{Density $m$}
                \label{fig:plotmlsvc}
        \end{subfigure}
        \begin{subfigure}[b]{\sizefigure\textwidth}
                \includegraphics[width=\textwidth]{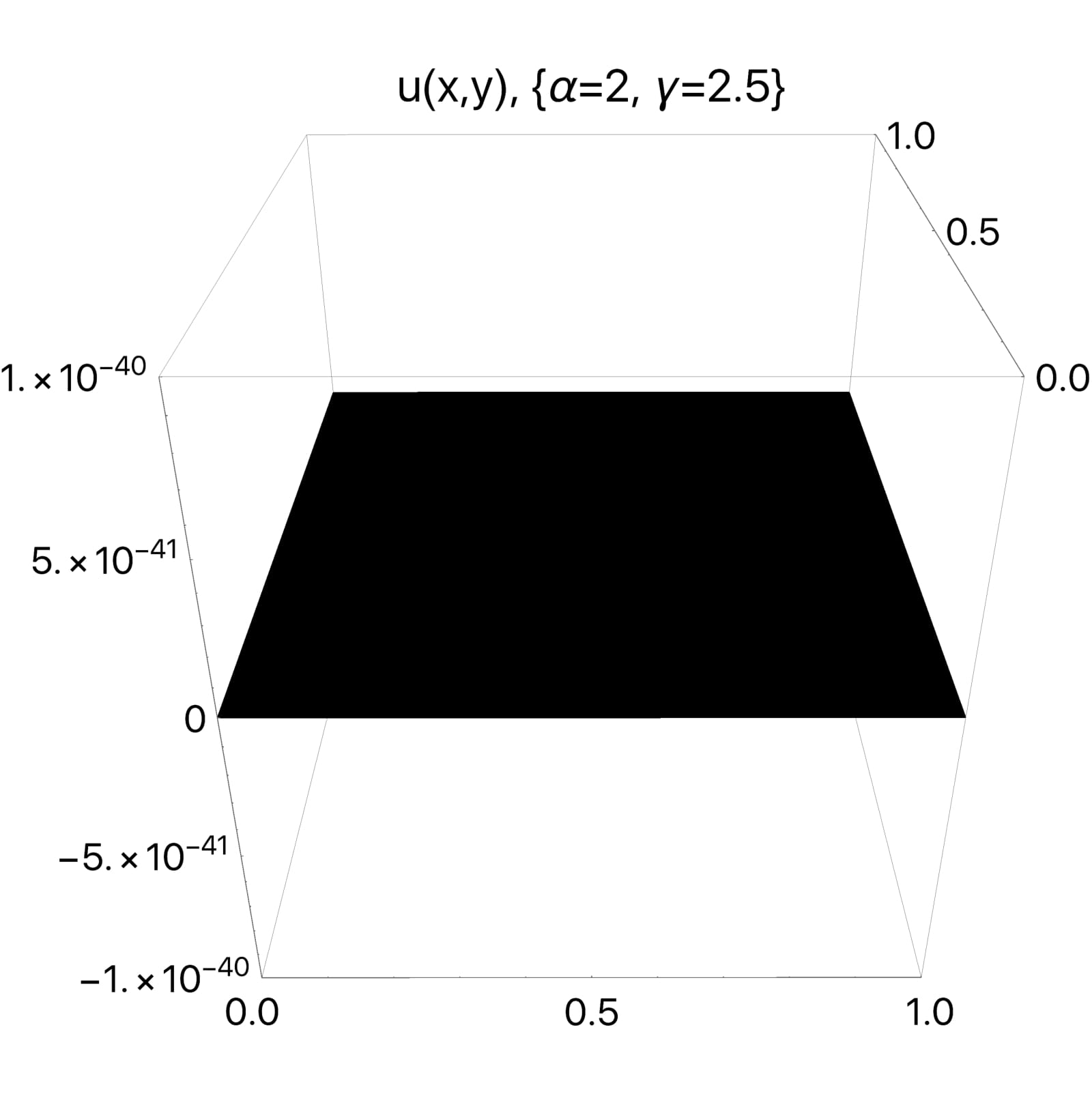}
                \caption{Value function $u$}
                \label{fig:plotulsvc}
        \end{subfigure}
        \begin{subfigure}[b]{\sizefigure\textwidth}
                \includegraphics[width=\textwidth]{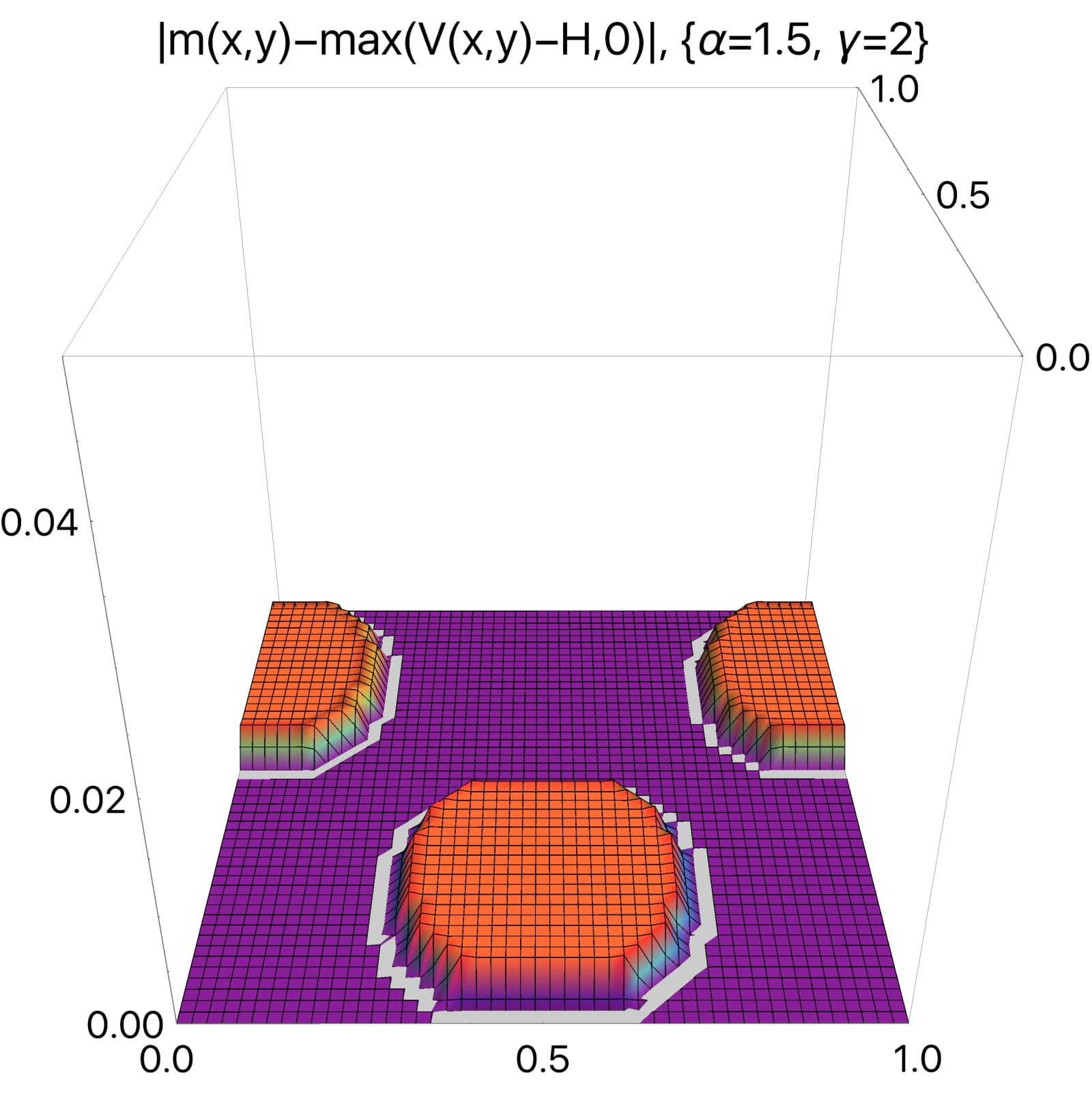}
                \caption{Absolute numerical error}%\\$|m(x,y)-(V(x,y)+1)|.$}
                \label{fig:plotmVvc}
        \end{subfigure}
        ~ %add desired spacing between images, e. g. ~, \quad, \qquad, \hfill etc. 
        \caption{ Numerical solution of the variational problem \eqref{mmz} with \(N=50\) and for \(d=2\), $P=(0,0)$, $V(x,y)= 10\sin\left(2\pi \left(x+\frac 1 4\right)\right)\cos\left(2\pi \left(y+\frac 1 5\right)\right)$, $G(m)=\frac{m^2}{2}$, $\alpha= 1.5$, and $\gamma=2$.}\label{fig:solExpr2c}
\end{figure}
%\begin{IMG}
%Clear[Plotmls,Plotuls,PlotVls]
%Get[CloudObject["Experiment01"]];
%\end{IMG}
%
%       \begin{IMG}
%               {"plotm", 
%                       GraphicsRow[{Plotmls}, ImageSize -> Large]
%               }
%               \end{IMG}
%               
%               
%               \begin{IMG}
%               {"plotu", 
%                       GraphicsRow[{Plotuls}, ImageSize -> Large]
%               }
%               \end{IMG}
%               
%               
%               \begin{IMG}
%               {"plotV", 
%                       GraphicsRow[{PlotVls}, ImageSize -> Large]
%               }
%               \end{IMG}

\begin{figure}[htb!]
        \centering
        %       \begin{subfigure}[b]{\sizefigure\textwidth}
        %               \includegraphics[width=\textwidth]{}
        %               \caption{The potential $V$}
        %               \label{fig:plotV}
        %       \end{subfigure}\\
        \begin{subfigure}[a]{\sizefigure\textwidth}
                \includegraphics[width=\textwidth]{plotVlsExprValidc.jpg}
                \caption{Potential $V$}
                \label{fig:plotVvc2}
        \end{subfigure}\\
        \begin{subfigure}[b]{\sizefigure\textwidth}
                \includegraphics[width=\textwidth]{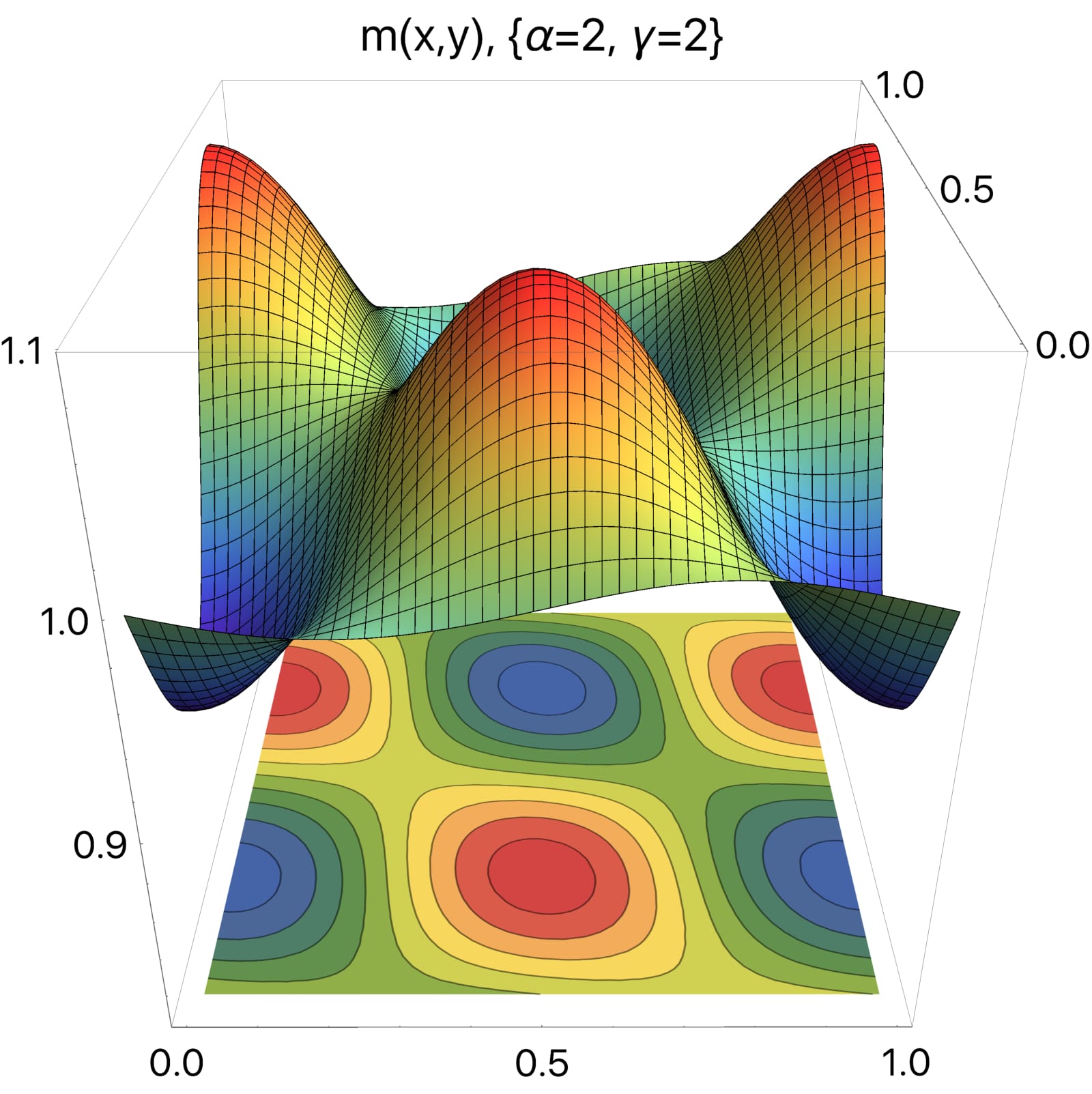}
                \caption{The density $m$}
                \label{fig:plotmls}
        \end{subfigure}
        \begin{subfigure}[b]{\sizefigure\textwidth}
                \includegraphics[width=\textwidth]{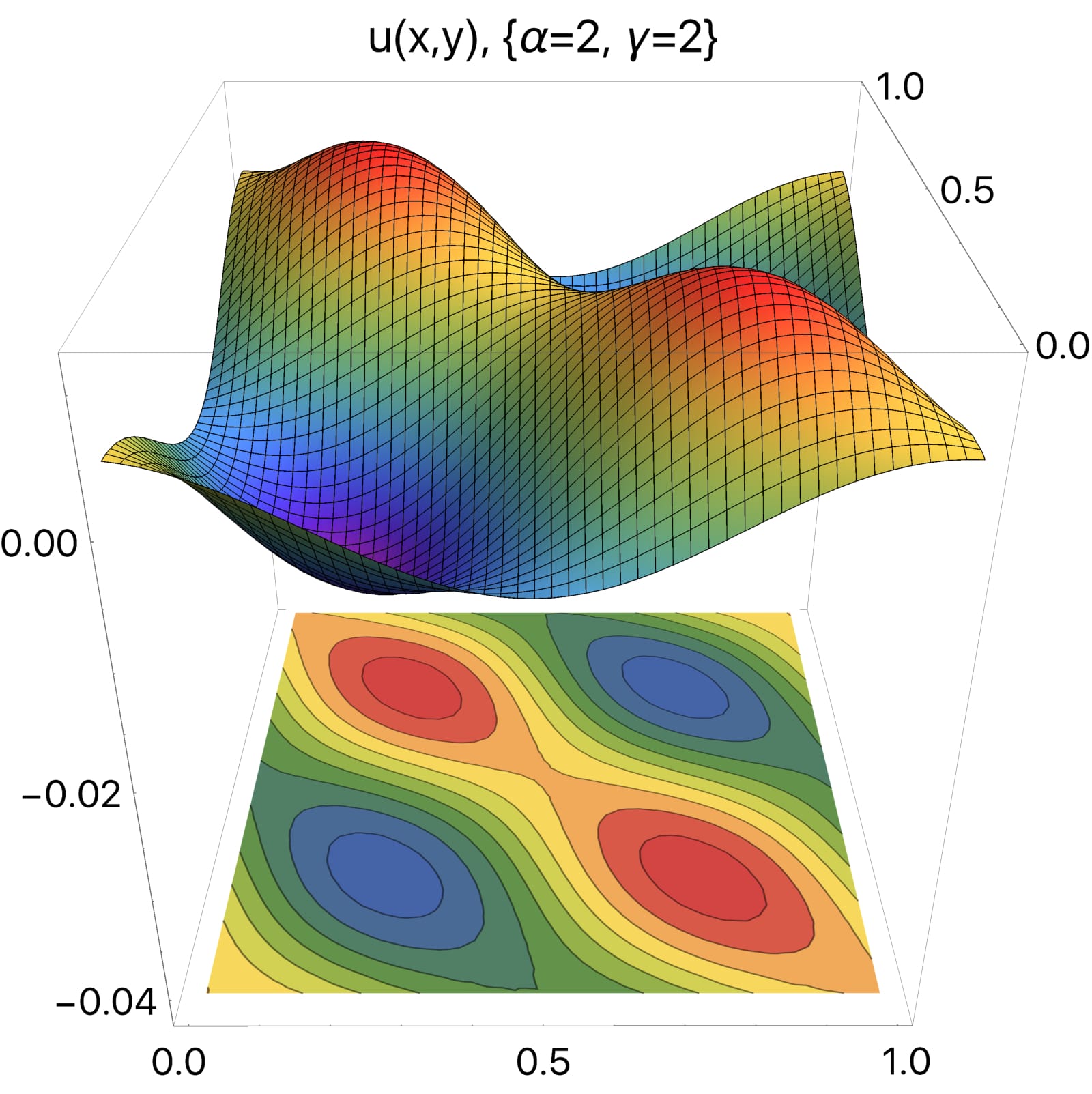}
                \caption{The value function $u$}
                \label{fig:plotuls}
        \end{subfigure}
        ~ %add desired spacing between images, e. g. ~, \quad, \qquad, \hfill etc. 
        \caption{Numerical solution of the variational problem \eqref{mmz} with \(N=50\) and for \(d=2\), $P=(1,3)$, $V(x,y)= \sin\left(2\pi \left(x+\frac 1 4\right)\right)\cos\left(2\pi \left(y+\frac 1 4\right)\right)$, $G(m) = m^3$, $\alpha= 1.5$, and $\gamma=2$.}\label{fig:solExpr1}
\end{figure}

\begin{table}
        \begin{center}
                \begin{tabular}{ | l | | c | c | c| }
                        \hline
                        \textbf{Grid size} & \textbf{Max abs. error} & \textbf{Mean abs. error} & \textbf{Running time (s)} \\      
                        \hline
                        \hline
%                        N=10 & 0.0326301 & 0.0097079 & 0.588743\\ \hline
                        $N=20$ & 0.03982920& 0.011831300 & 3.39795 \\ \hline
%                        N=30 &0.0066590 & 0.00201253 & 63.4893 \\ \hline
                        $N=40$ & 0.00691211 & 0.002116950  &222.576 \\ \hline
%                        N=50 & 0.00305315 & 0.000891284 &541.001\\ \hline
%                        N=60 & 0.00154468 & 0.000465784 &1353.58 \\ \hline
                        $N=80$ & 0.00108425& 0.000318745 & 6032.06  \\ \hline
                \end{tabular}
        \end{center}
        \caption{Numerical error and running time computation. Comparison between the closed-form solution, $\bar m(x)=(V(x)-\Hh)^+$, where $\Hh\in\Rr$ is such that $\int_{\Tt^2}\bar m\, dx=1$, and the numerical solution of the variational problem \eqref{mmz}. Here, $P=(0,0)$, $V(x)=10 \sin\left(2\pi \left(x+\frac 1 4\right)\right)\cos\left(2\pi \left(y+\frac 1 4\right)\right)$, $G(m) = \frac{m^2}{2}$,  $\alpha= 1.5$, and $\gamma=2$.}\label{Table2d}
\end{table}

%\clearpage
%\begin{IMG}
%       Clear[Plotmls,Plotuls,PlotVls]
%       Get[CloudObject["Experiment2"]];
%\end{IMG}
%
%\begin{IMG}
%       {"plotm2", 
%               GraphicsRow[{Plotmls}, ImageSize -> Large]
%       }
%\end{IMG}
%
%
%\begin{IMG}
%       {"plotu2", 
%               GraphicsRow[{Plotuls}, ImageSize -> Large]
%       }
%\end{IMG}
%
%
%\begin{IMG}
%       {"plotV2", 
%               GraphicsRow[{PlotVls}, ImageSize -> Large]
%       }
%\end{IMG}
\begin{figure}[htb!]
        \centering
        \begin{subfigure}[b]{\sizefigure\textwidth}
                \includegraphics[width=\textwidth]{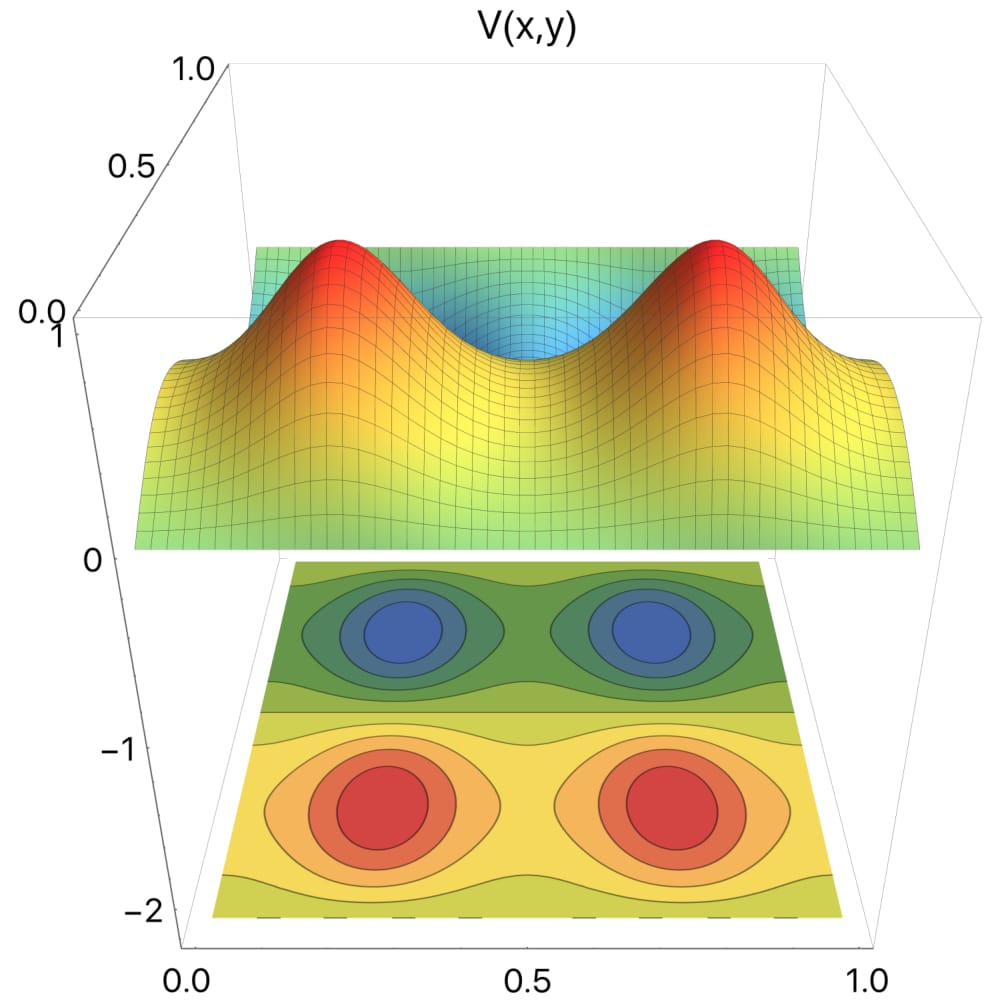}
                \caption{Potential $V$}
                \label{fig:plotV2}
        \end{subfigure}\\
        \begin{subfigure}[b]{\sizefigure\textwidth}
                \includegraphics[width=\textwidth]{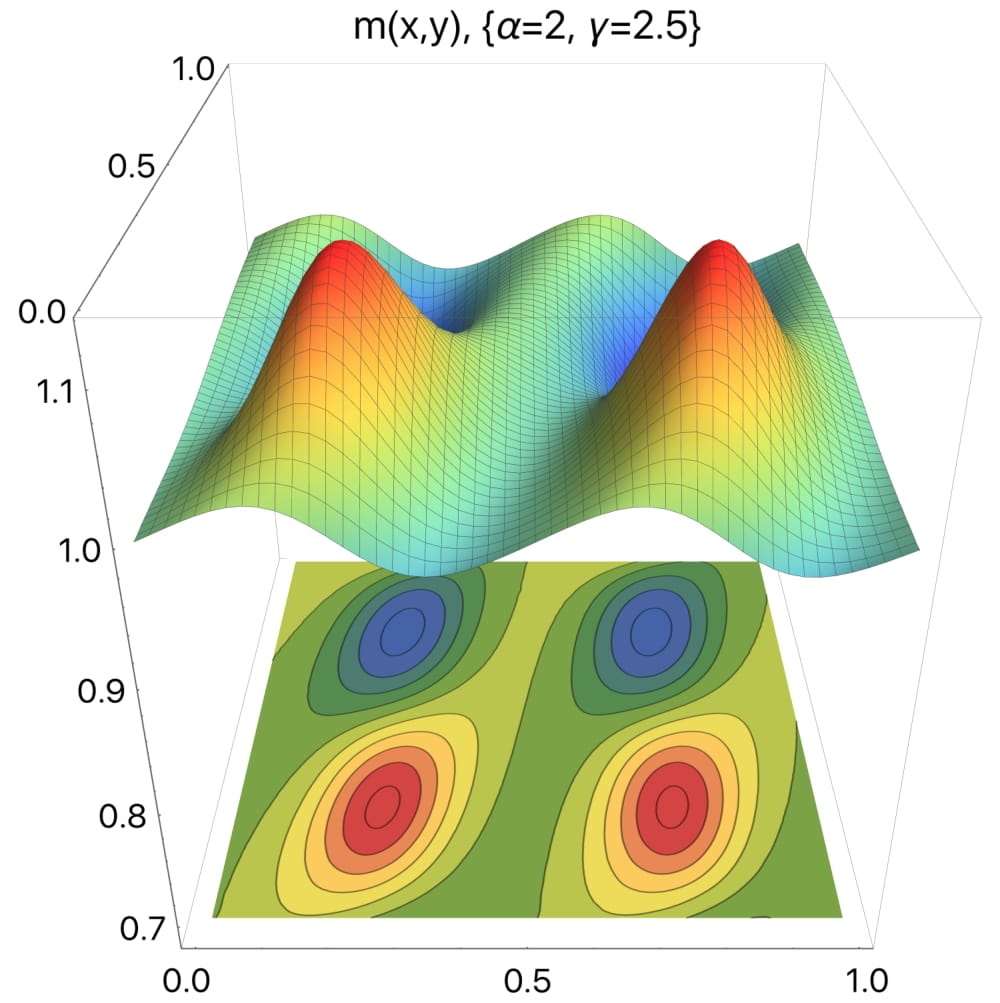}
                \caption{Density $m$}
                \label{fig:plotmls2}
        \end{subfigure}
        \begin{subfigure}[b]{\sizefigure\textwidth}
                \includegraphics[width=\textwidth]{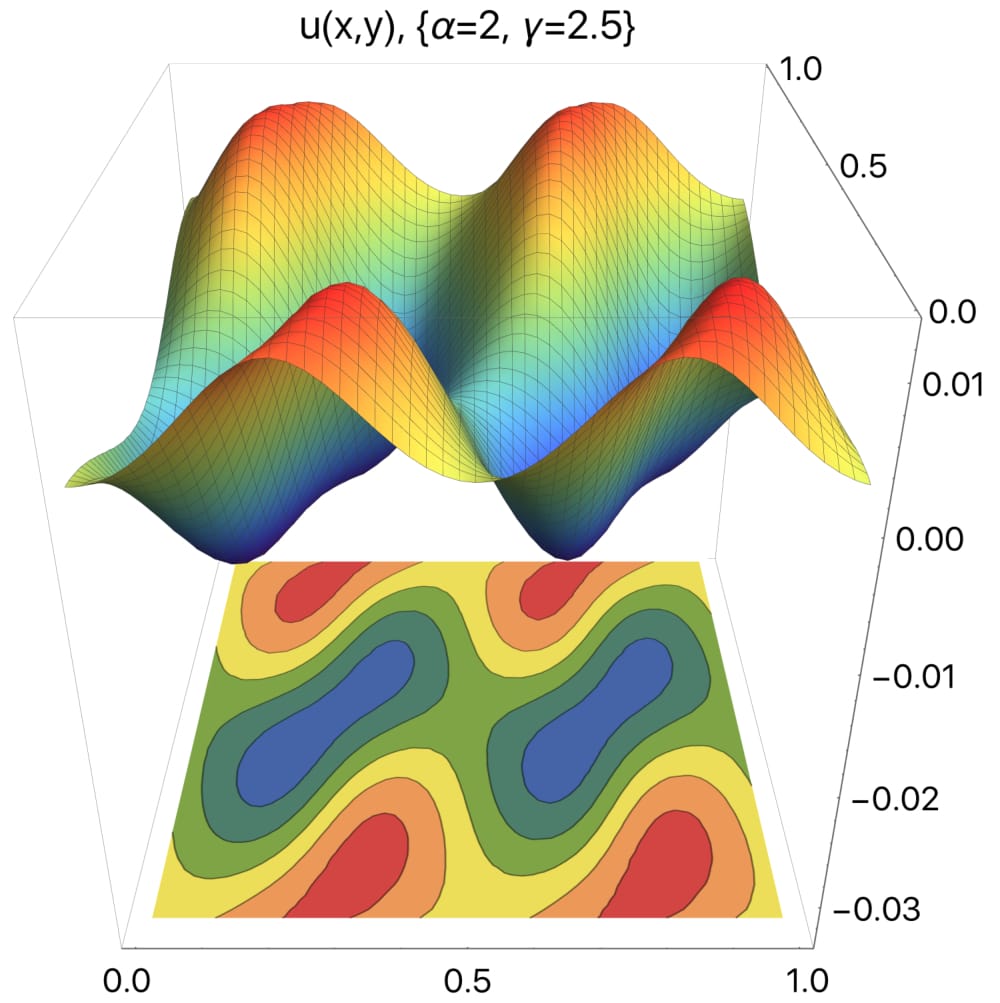}
                \caption{Value function $u$}
                \label{fig:plotuls2}
        \end{subfigure}
        ~ %add desired spacing between images, e. g. ~, \quad, \qquad, \hfill etc. 
        \caption{Numerical solution of the variational problem \eqref{mmz} with \(N=50\) and for \(d=2\), $P=(-1,3)$, $V(x,y)= e^{-\sin\left(2\pi \left(x+\frac 1 4\right)\right)^2}\cos\left(2\pi \left(y-\frac 1 4\right)\right)$, $G(m) = \frac{m^2}{2}+ m^3$, $\alpha= 2$, and $\gamma=2.5$.}\label{fig:solExpr12}
\end{figure}
%\clearpage
\section{Numerical solution for the two-dimensional first-order MFGs with congestion, with $0<\alpha<1$ and $\gamma>1$}\label{num8}
In this section, we numerically solve the following MFG system, introduced in Section \ref{2dcase}:
\begin{equation}\label{eq:hjb2d}
\begin{cases}
\frac{|P+Du|^{\gamma}}{\gamma m^{\alpha}}+V(x)-g(m)=\overline{H}&\quad \mbox{in }\; \Tt^2\\
\div(m^{1-\alpha} |P+Du|^{\gamma-2} (P+Du) )=0 &\quad \mbox{in }\; \Tt^2\\
m>0, \int_{\Tt^2} m(x)\,dx=1
\end{cases}
\end{equation}
for  $0<\alpha<1$ and $\gamma>1$. In the sequel, we describe a procedure to obtain a numerical solution of \eqref{eq:hjb2d}. 
\bigskip
\paragraph{\textbf{Step 1.}} First, we define
\[
\gamma'=\frac{\gamma}{\gamma-1} \quad\mbox{and}\quad \tilde{\alpha}=\alpha-(\alpha-1)\gamma'.
\]
Note that $\gamma'>1$ and $1<\talpha<\gamma'$.
\bigskip
\paragraph{\textbf{Step 2.}} Next, using the results from Section \ref{2dcase}, we transform \eqref{eq:hjb2d} into

\begin{equation}\label{eq:Transformed}
\begin{cases}
\frac{|Q+D\psi|^{\gamma'}}{\gamma' m^{\talpha}}+\frac{\gamma}{\gamma'}V(x)-\frac{\gamma}{\gamma'}g(m)=
{\frac{\gamma}{\gamma'}} \overline{H}&\quad \mbox{in }\; \Tt^2\\
\div(m^{1-\talpha} |Q+D\psi|^{\gamma'-2} (Q+D\psi) )=0&\quad \mbox{in }\; \Tt^2\\
m>0,\int_{\Tt^2} m(x)\,dx=1.
\end{cases}
\end{equation}
Then, for each $Q$, we solve the above system. For this, we formulate a variational principle analogous to \eqref{mmz}. Namely, for fixed $Q$, we minimize the functional
\begin{equation}\label{eq:Jpsiandm}
J[\psi,m] = \int_{\Tt^2}\left( \frac{|Q+D\psi|^{\gamma'}}{\gamma' (\talpha - 1) m^{\talpha-1}} -\frac{\gamma}{\gamma'}V m +\frac{\gamma}{\gamma'}G(m)\right) dx
\end{equation}
under the constraints $\int_{\Tt^2}\psi \,dx=0$, $\int_{\Tt^d}m\, dx= 1$, and $m>0$,
using the corresponding discretization for $\psi$ and $m$. That is, considering the grid functions, $\psi,m\in \Rr^{N^2}$, we numerically solve
\begin{equation}\label{eq:minJpsiandm}
\min_{(\psi,m)\in \mathcal A_h} J_h[\psi,m],
\end{equation}
where
\begin{equation}\label{eq:discreteJpsiandm}
 J_h[\psi,m]=h^2\sum_{i,j=0}^{N-1}\left( ( f_h([D_h \psi],m))_{i,j} - \frac{\gamma}{\gamma'}V_{i,j}m_{i,j}+\frac{\gamma}{\gamma'}G(m_{i,j})\right)
\end{equation}
with
\[
 ( f_h([D_h \psi],m))_{i,j}=\frac{1}{m_{i,j}^{\talpha-1 }\gamma'(\talpha -1)  }
\left(\left(q_1+(D^h_1 \psi)_{i,j}\right)^2
+\left(q_2+(D^h_2 \psi)_{i,j}\right)^2\right)
^{\gamma' /2}
\]
for $m_{i,j}\not=0$ and $i,j\in\{0,\ldots,N-1\}$. The set $\mathcal{A}_{h}$ is the same as in \eqref{eq:setAh}, replacing $u$ by $\psi$. Moreover, the discretization scheme is the one given in \eqref{diffschemes}--\eqref{gradCentralDiff}.
%and
%\[
%\Aa_h=\left\{(\psi,m)\in \Rr^{2N^2}:h^2\sum_{i,j=0}^{N-1} \psi_{i,j} =0,h^2\sum_{i,j=0}^{N-1} m_{i,j}=1,m_{i,j}\geq 0\,\,  \forall i,j\right\}.
%\]

\bigskip
\paragraph{\textbf{Step 3.}} So far, for each \(Q\), we  have $\psi$ and $m$ satisfying \eqref{eq:minJpsiandm}. Next, we determine the corresponding vector \(P\). By \eqref{eq:PperpDuPerp}, we have
\[
P^\perp+(Du)^\perp=
m^{1-\talpha} |Q+D\psi|^{\gamma'-2} (Q+D\psi).
\]
Because $u$ is periodic, by integrating the previous expression in $\Tt^2$, we obtain
\begin{equation}\label{eq:Pperp}
P^\perp = \int_{\Tt^2}m^{1-\talpha} |Q+D\psi|^{\gamma'-2} (Q+D\psi)\,dx.
\end{equation}

%{\color{red} actually it is not clear to me what we mean by the next sentence. I believe we take m and replace it by $m+\epsilon$ and then differentiate? of course this does not preserve mass, but its just a formula...}

We observe that  $P^\perp$ can be obtained using the functional $J$.
In fact, let  
\[
\vartheta[\psi,m]=\frac{|Q+D\psi|^{\gamma'}}{\gamma' (\talpha - 1) m^{\talpha-1}} -\frac{\gamma}{\gamma'}V m +\frac{\gamma}{\gamma'}G(m)
\]
denote the integrand of $J$. Taking the variational derivative of $\vartheta$  with respect to $m$ in the direction of $1$, we get
\[
\frac{\delta}{\delta m} \vartheta [\psi,m]=\lim_{\epsilon\to 0}\frac{\vartheta [\psi,m+\epsilon]-\vartheta[\psi,m]}{\epsilon}=   -\frac{|Q+D\psi|^{\gamma'}}{\gamma'  m^{\talpha}} -\frac{\gamma}{\gamma'}V +\frac{\gamma}{\gamma'}g(m).
\]
Next,  differentiating the  expression above with respect to $Q$, we obtain
\[
 -\frac{|Q+D\psi|^{\gamma'-2}(Q+D\psi)}{ m^{\talpha}}.
\]
Finally, multiplying this last expression by $m$ and integrating over $\Tt^2$, we obtain
the right-hand side of \eqref{eq:Pperp}. 
Therefore,  $P^\perp$ is alternatively given by
\begin{equation}
\label{eq:formPper}
\begin{aligned}
P^\perp = -\int_{\Tt^2}\frac{\partial}{\partial Q}\left(\frac{\delta}{\delta m} \vartheta[\psi,m](Q)\right)m\,dx.
\end{aligned}
\end{equation}

Recalling that  $P^\perp = (-p_2,p_1)$,
 we observe that the discrete version of \eqref{eq:formPper} is
\begin{align}\label{eq:p1p2}
\begin{split}
p_1 &= h^2\sum_{i,j=0}^{N-1}(\mathbf{D}^h_{q_2}(\mathbf{D}^h_m (\vartheta[\psi,m])(q_2))_{i,j}m_{i,j})_{i,j}\,dx\\
p_2 &= -h^2\sum_{i,j=0}^{N-1}\mathbf{D}^h_{q_1}(\mathbf{D}^h_m (\vartheta[\psi,m])_{i,j}(q_1))_{i,j}m_{i,j}\,dx.
\end{split}
\end{align}
\begin{remark}
The notation $(\mathbf{D}^h_{x}(f(x)))_{i,j}$ must be understood as the $i,j$-node of the symbolic derivative of a grid function, $f$, with respect to $x$,   whereas $(\mathbf{D}_f^h g[f])_{i,j}$
is the symbolic variational derivative of $g$ with respect to $f$ in the direction of $1$.
%the central finite differences introduced in \eqref{diffschemes}.  
In our implementation,
we use symbolic calculus to compute those derivatives
 in a straightforward and automated fashion.
Here, we obtain \eqref{eq:p1p2} using standard symbolic manipulations applied to $J$. 
%functions from the Mathematica software applied to $J$. 
\end{remark}
%In our numerical 
%Recall that $P^\perp = (-p_2,p_1)$ and note that the discrete version of the above equation is
%
%\begin{align*}
%p_{1}&=-h^2\sum_{i=0}^{N-1}\sum_{j=0}^{N-1}m_{i,j}^{1-\talpha}\left|q_2+(D_2^h\psi)_{i,j}\right|^{\gamma'-2}(q_2+(D_2^h\psi)_{i,j})\\
%p_{2}&=h^2\sum_{i=0}^{N-1}\sum_{j=0}^{N-1}m_{i,j}^{1-\talpha}\left|q_1+(D_1^h\psi)_{i,j}\right|^{\gamma'-2}(q_1+(D_1^h\psi)_{i,j}).
%\end{align*}

%{\color{red} How is Step 3 done at the discrete level?}

\paragraph{\textbf{Step 4.}} Finally, we use $m$ and \(P\) from the previous steps and solve, for $u$, the Hamilton--Jacobi equation, 
\begin{equation}\label{eq:hjbEffHamiltgamma}
\frac{|P+Du|^{\gamma}}{\gamma m^{\alpha}}+V(x)-g(m)=\overline{H} \quad \mbox{in }\;\Tt^2.
\end{equation}
Under the notation of Section \ref{sub:discretization}, we use the following monotone scheme for $|P+D^h u |_{i,j}^{\gamma}$, $i,j\in\{0,\ldots, N-1\}$:
\begin{align*}
|P+D^h u|_{i,j}^{\gamma}=&\max(-p_1-(D^h_1 u)^{+}_{i,j},0)^{\gamma}+\max(p_1+(D^h_1 u)^{+}_{i-1,j},0)^{\gamma}\\&+\max(-p_2-(D^h_2 u)^{+}_{i,j},0)^{\gamma}+\max(p_2+(D^h_2 u)^{+}_{i,j-1},0)^{\gamma}.
\end{align*}
From \cite{LPV}, we have that
\[
\beta u^{(\beta)} + \frac{|P+Du^{(\beta)}|^{\gamma}}{\gamma m^{\alpha}}+V(x)-g(m)=0 \quad \mbox{in }\;\Tt^2
\]
converges to \eqref{eq:hjbEffHamiltgamma} when $\beta\to 0$; that is, $\beta u^{(\beta)}$ converges uniformly to $-\Hh$, and $u^{(\beta)}-\max_{x\in\Tt^2} u(x)$ converges uniformly, up to subsequences, to $u$.
We then use the notation for grid functions from Section~\ref{sub:discretization}, and numerically solve, for small $\beta>0$, the  discrete problem 
\[
\beta u^{(\beta)}_{i,j} + \frac{|D^h u^{(\beta)} + P|_{i,j}^{\gamma}}{\gamma m^\alpha_{i,j}} +V_{i,j}-g_{i,j} = 0\quad \mbox{in $\Tt^2_N$}.
\]
Hence, we set 
\[
\overline{H}^{(\beta)} = \max\{u^{(\beta)}_{i,j},\; i,j\in\{0,\ldots, N-1\} \}.
\]
Finally, the solution of \eqref{eq:hjbEffHamiltgamma} is approximated by
\[
u_{i,j}\cong u^\beta_{i,j} - \overline{H}^\beta.
\]

%{\color{red} need to explain that as $\beta\to 0$\  we have convergence and cite here the paper (old unpublished preprint, should be in mfg.bib otherwise ask me) by Lions-Papanicolaou and Varadhan where this was proven in the context of homogenization. }

%{\color{red} We are missing what is the numerical method: how do we discretize the equation for Q and $\psi$ (I guess same as in previous section), how do we solve for P, and finally how do we solve for $u$}

\subsection{Numerical experiments}
In Fig.~\ref{fig:solTransf}, we illustrate the solution of \eqref{eq:hjb2d}  for $\alpha=0.8$, $\gamma=2$, and $P=(1,3)$. Moreover, we use the coupling $G(m) = m^3$ and the potential $$V(x,y)= \sin\left(2\pi \left(x+\frac 1 4\right)\right)\cos\left(2\pi \left(y+\frac 1 4\right)\right).$$ 
In this example, we have $Q=(3,-1)$. The value function, $u$, and the density, $m$, are displayed in Figs.~\ref{fig:plotuTransf} and \ref{fig:plotmTransf}, respectively. Note that, even with $\alpha<1$, the density is similar to the density in Fig. \ref{fig:plotmls}. However, for this example, the value function behaves differently from what we observed in Fig. \ref{fig:plotuls}.
%\textcolor{red}{Comments???}%\clearpage
\begin{figure}[h!]
        \centering
        \begin{subfigure}[b]{\sizefigure\textwidth}
                \includegraphics[width=\textwidth]{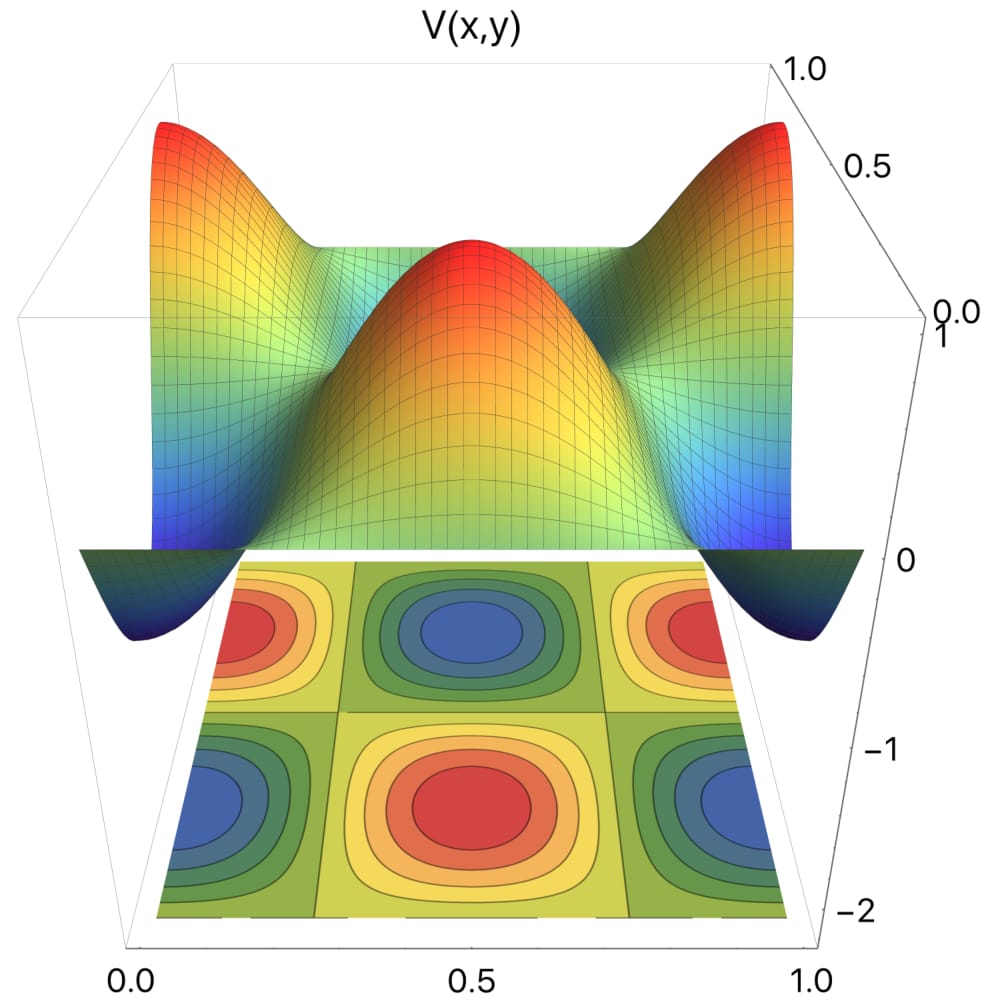}
                \caption{Potential $V$}
                \label{fig:plotVTransf}
        \end{subfigure}\\
        \begin{subfigure}[b]{\sizefigure\textwidth}
                \includegraphics[width=\textwidth]{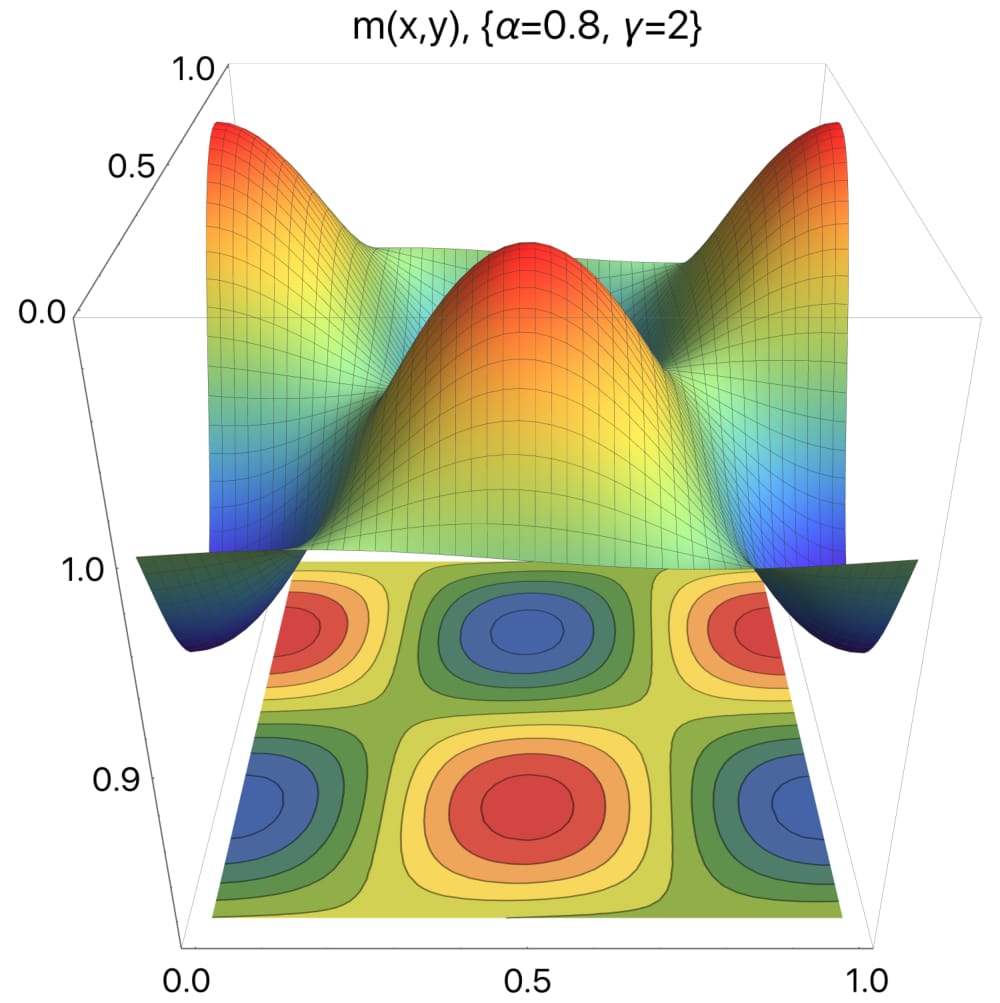}
                \caption{Density $m$}
                \label{fig:plotmTransf}
        \end{subfigure}
        \begin{subfigure}[b]{\sizefigure\textwidth}
                \includegraphics[width=\textwidth]{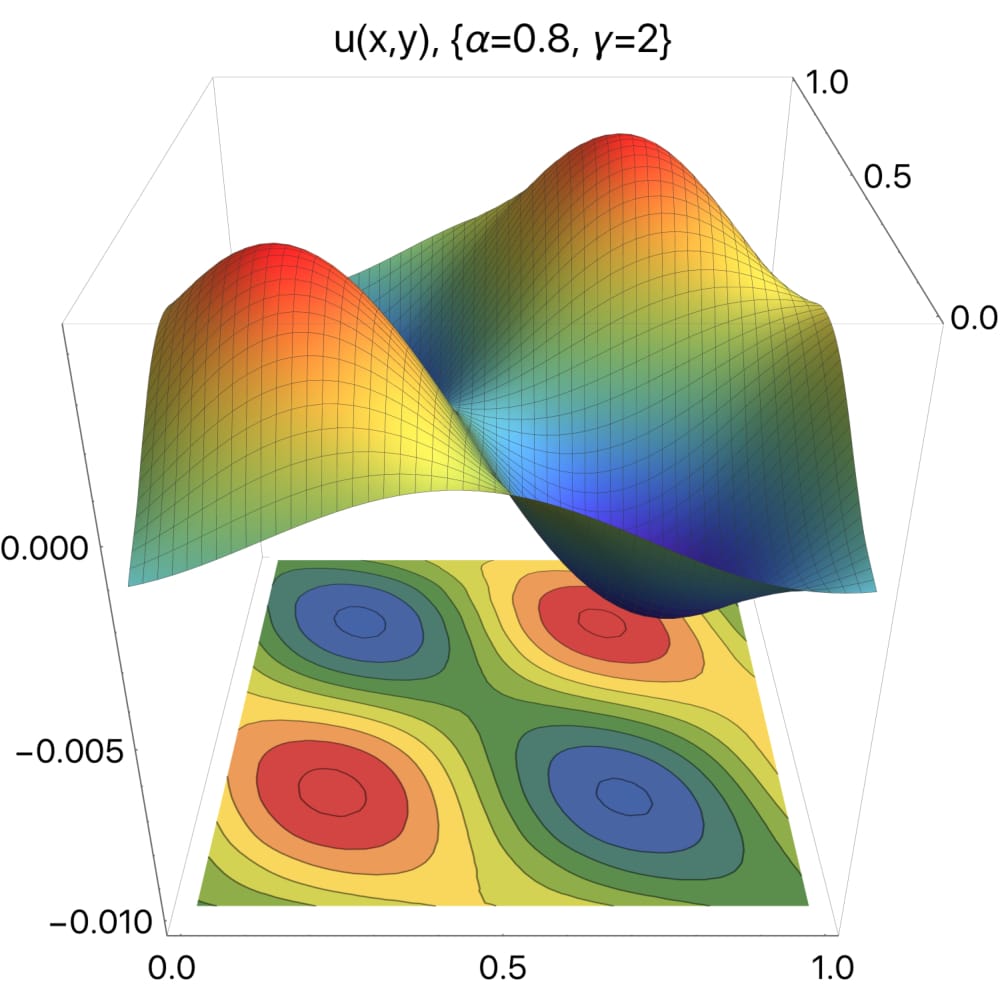}
                \caption{Value function $u$}
                \label{fig:plotuTransf}
        \end{subfigure}
        ~ %add desired spacing between images, e. g. ~, \quad, \qquad, \hfill etc. 
        \caption{Numerical solution of \eqref{eq:hjb2d} with \(N=50\) and for  $P=(1,3)$, $V(x,y)= \sin\left(2\pi \left(x+\frac 1 4\right)\right)\cos\left(2\pi \left(y+\frac 1 4\right)\right)$, $G(m) = m^3$, $\alpha= 0.8$, and $\gamma=2$.}\label{fig:solTransf}
\end{figure}

%\clearpage

\section{Numerical solution for the second-order MFGs with congestion, with $\gamma=2$ and $1<\alpha<\gamma$}
\label{num9}

In this section, for illustration purposes, we numerically solve a variational problem introduced in Section \ref{otherH}, which corresponds to a second-order MFG with congestion in the quadratic case. More precisely, we consider the minimization problem
\begin{equation}\label{eq:varfomul2ndord}
\min_{\psi \geq 0}\hat J[\psi], \quad\hat J[\psi]=\int_{\Tt^d} \bigg[\frac{|D\psi|^2}{2\alpha+1} +\hat{G}(\psi)-\frac{2\alpha+1}{2(\alpha+1)}(V(x)-\overline{H})\psi^{\frac{2(\alpha+1)}{2\alpha+1}}  \bigg] dx, 
\end{equation}
where, for \(z\geq0\),
\begin{equation}\label{eq:g9.1}
\hat{G}(z)=\int_0^z g(r^{\frac{2}{2\alpha+1}})r^{\frac{1}{2\alpha+1}}\,dr
\end{equation}
and $\Hh$ is chosen such that the constraint
\begin{equation}\label{eq:intpsi1}
\begin{aligned}
\int_{\Tt^d} \psi^{\frac
        2 {2\alpha+1}}\,dx=1
\end{aligned}
\end{equation}
holds. Recall  that the Euler--Lagrange equation of the functional in \eqref{eq:varfomul2ndord} is  \eqref{Eq:statcongquad}. Moreover,  \eqref{Eq:statcongquad} corresponds to \eqref{eq.pLap} for $\gamma=2$. 
 
Using the notation for grid functions introduced in Section \ref{sub:discretization}, we present the discretized version of \eqref{eq:varfomul2ndord} in the two-dimensional case as follows. For a grid function, $\psi\in \Rr^{N^2}$, we numerically solve 
\begin{equation}
\min_{\psi\geq 0} J_h[\psi],
\end{equation}
where
\[
J_h[\psi]=h^2\sum_{i,j=0}^{N-1}\left( ( f_h([D_h \psi]))_{i,j}+G(\psi_{i,j})- \frac{2\alpha +1 }{2(\alpha +1)}(V_{i,j} - \overline{H})\psi_{i,j}^{\frac{2(\alpha+1)}{2\alpha+1}}\right)
\]
and
\[
 ( f_h([D_h \psi]))_{i,j}=\frac{1}{(2\alpha + 1)}
\left(\left((D^h_1 \psi)_{i,j}\right)^2
+\left((D^h_2 \psi)_{i,j}\right)^2\right).
\]
Moreover, as in the previous sections, the discretization scheme follows \eqref{diffschemes}--\eqref{gradCentralDiff}. Recall that  $P=(0,0)$ here. In the following subsection, we perform a numerical experiment to illustrate our method.

%{\color{red}: we are missing what is the discretization, how we implement the numerical method, etc - add details}

\subsection{Numerical experiment in the two-dimensional case}
Here, we depict a numerical  solution of \eqref{eq:varfomul2ndord} for the potential 
 \[
 V(x,y)=  e^{-\sin\left(2\pi \left(x+\frac 1 4\right)\right)^2}\sin\left(2\pi \left(y-\frac 1 4\right)\right),
 \]
 the coupling $g(r) = r^3$ (see \eqref{eq:g9.1}), and $\alpha= 1.5$ (see Fig.~\ref{fig:solSecOr}).
 For this example, the value of $\overline{H}$ such that \eqref{eq:intpsi1} is satisfied is approximately $\overline{H} \cong -3.001$.  The density,
$m$, is displayed in Fig.~\ref{fig:plotmSecOr}.
In this example, we note that the density still resembles the potential. However, diffusion tends to spread the distribution of the agents making it more uniform. 
%\textcolor{red}{Comments???} 
\begin{figure}[h!]
        \centering
        \begin{subfigure}[b]{\sizefigure\textwidth}
                \includegraphics[width=\textwidth]{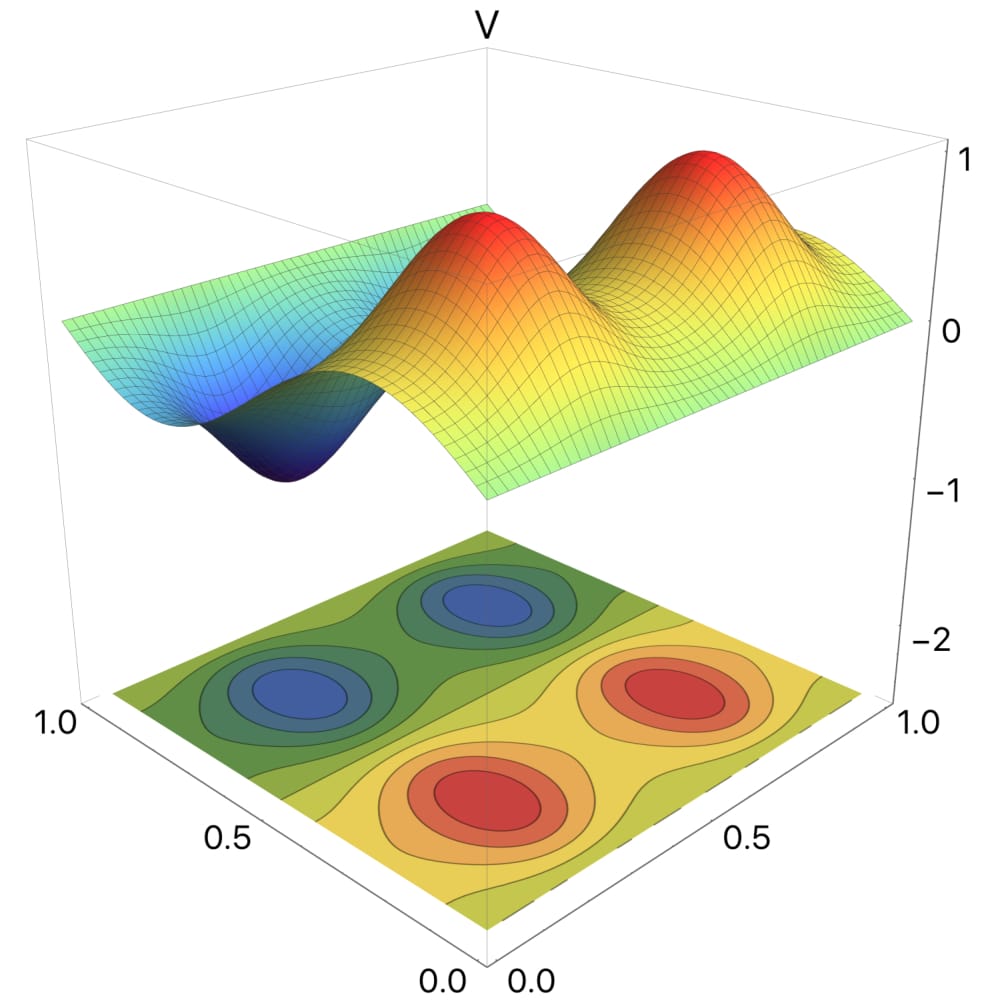}
                \caption{Potential $V$}
                \label{fig:plotVSecOr}
        \end{subfigure}
        \begin{subfigure}[b]{\sizefigure\textwidth}
                \includegraphics[width=\textwidth]{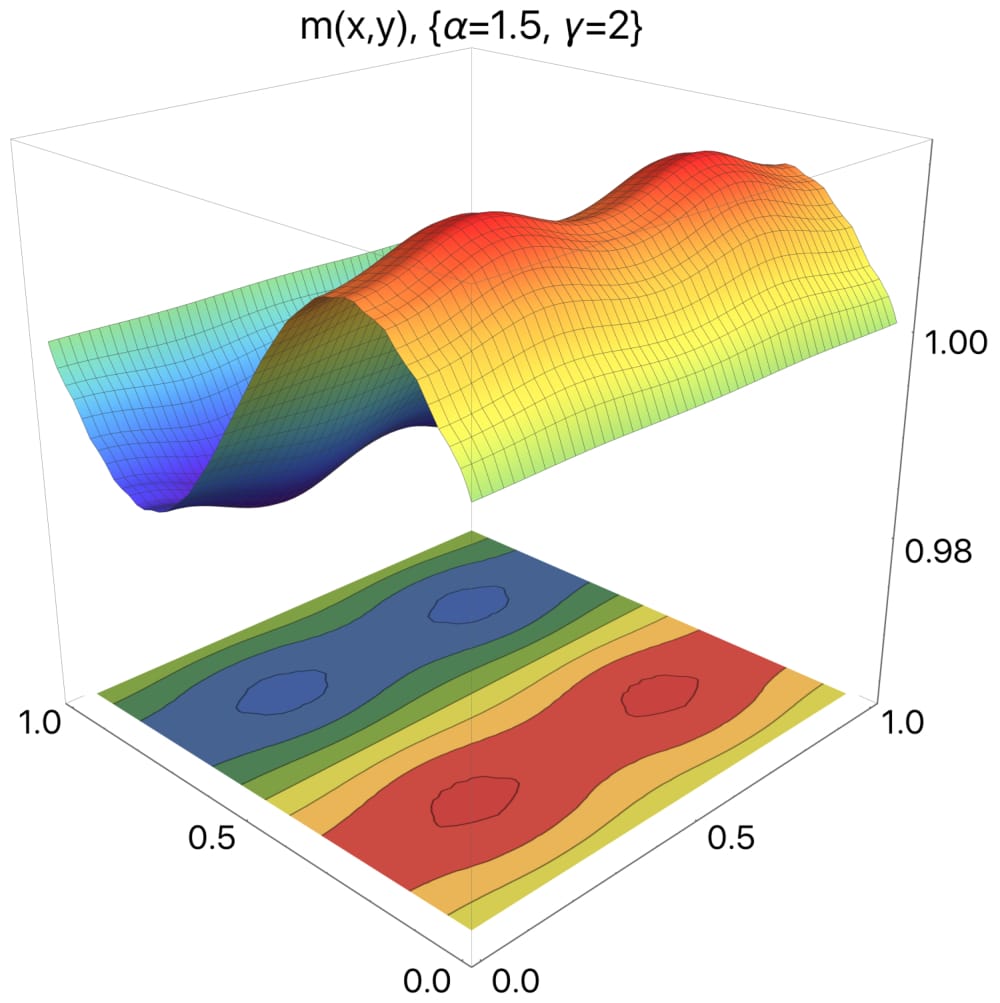}
                \caption{Density $m$}
                \label{fig:plotmSecOr}
        \end{subfigure}
        %       \begin{subfigure}[b]{\sizefigure\textwidth}
        %               \includegraphics[width=\textwidth]{plotu2.jpg}
        %               \caption{Value function $u$}
        %               \label{fig:plotuSecOr}
        %       \end{subfigure}
        ~ %add desired spacing between images, e. g. ~, \quad, \qquad, \hfill etc. 
        \caption{Numerical solution  of \eqref{eq:varfomul2ndord} with \(N=50\) and for \(d=2\),   $V(x,y)=  e^{-\sin\left(2\pi \left(x+\frac 1 4\right)\right)^2}\sin\left(2\pi \left(y-\frac 1 4\right)\right)$, $g(m) = m^3$, and $\alpha= 1.5$.}\label{fig:solSecOr}
\end{figure}

\section{Conclusions}
In this paper, we develop a new variational formulation for systems of first-order MFGs with congestion. This variational principle provides a new construction of weak solutions and leads to a novel numerical approach through an optimization problem. 

Even though the variational structure strongly depends on the form of the MFGs, it is often possible to modify the variational principle. This approach was
followed  in Section~\ref{2dcase} for the first-order case and in Section~\ref{tsom} for the second-order case. Moreover, in Sections \ref{num}--\ref{num9}, we use these results to obtain new numerical solutions for MFGs with congestion.
 
\def\cprime{$'$}

\end{document}